\newtheorem{thm}{Theorem}[section]
\newtheorem{cor}[thm]{Corollary}
\newtheorem{lem}[thm]{Lemma}
\newtheorem{prop}[thm]{Proposition}
\newtheorem{cnj}[thm]{Conjecture}
\newtheorem{dfn}[thm]{Definition}
\newtheorem{exa}[thm]{Example}
\newtheorem{prob}[thm]{Problem}
\title{Symmetry of the refined $q,t$-Catalan polynomials for $\vec{k}$-Dyck paths}
\author{Menghao Qu$^{1}$ and Yingrui Zhang$^{2}$}
\address{$^{1}$Scuola Normale Superiore, Piazza dei Cavalieri 7, 56126 Pisa, Italy}
\address{$^{2}$Yunnan University of Finance and Economics, School of Statistics and Mathematics, 650221 Kunming, China}
\email{$^1$\texttt{menghao.qu@sns.it}\ \& $^2$\texttt{zyrzuhe@126.com}}
\begin{document}

\begin{abstract}
Pappe, Paul, and Schilling introduced two combinatorial statistics, depth and ddinv, associated with classical Dyck paths, and proved that the distributions of (area, depth) and (dinv, ddinv) are $q,t$-symmetric by constructing an involution on plane trees. They also provided a new formula for the original $q,t$-Catalan polynomials $C_{n}(q,t)$. We observe that depth is a slight modification of bounce, which was defined by the filling algorithm and ranking algorithm of Xin and the second author in their study of $\vec{k}$-Dyck paths. In this article, we generalize depth of classical Dyck paths to the case of $\vec{k}$-Dyck paths and prove $q,t$-symmetry of the pair of statistics (area, depth) for $\mathcal{K}$-Dyck paths. We provide an alternative description of the higher $q,t$-Catalan polynomials $C_{n}^{(k)}(q,t)$.
\end{abstract}

\maketitle

\noindent
\begin{small}
 \emph{Mathematic subject classification}: 05A19; 05C05; 05E10.
\end{small}

\noindent
\begin{small}
\emph{Keywords}: $q,t$-symmetry; $q,t$-Catalan polynomials; $\vec{k}$-Dyck paths.
\end{small}

\section{Introduction}

$q,t$-Combinatorics is a branch of Combinatorics that studies the distribution of pairs of combinatorial statistics $(\mathrm{stat1, \mathrm{stat2}})$ on various objects. It plays an important role in the theory of symmetric functions, particularly in the theory of Macdonald polynomials \cite{macdonald1998symmetric}, which are $q,t$-generalizations of the Schur functions. The famous $q,t$-Catalan polynomials $C_{n}(q,t)$ originate from the study of diagonal harmonics \cite{garsia2002proof,haglund2005shuffle, haglund2008q}. It can be expressed as $C_n(q,t)=\left<\nabla e_n,e_n\right>$, where $e_n$ is the elementary symmetric function and $\nabla$ is a Macdonald eigenoperator \cite{bergeron1999identities}. The combinatorial formula for $C_n(q,t)$ is a sum over all classical Dyck paths $\mathcal{D}_{n}$, graded by pairs of statistics (area, bounce) or (dinv, area). The equivalence of these two expressions is verified by a bijection on $\mathcal{D}_{n}$, known as the zeta map \cite{haglund2008q} or sweep map \cite{armstrong2015sweep,thomas2018sweeping}.

A polynomial $F(q,t)$ in $q$ and $t$ is $q,t$-symmetric if $F(q,t)=F(t,q)$. The $q,t$-symmetry of $C_n(q,t)$ follows as a corollary of the famous shuffle theorem of Carlsson and Mellit \cite{carlsson2018proof}. However, a long-standing problem in the Algebraic Combinatorics community is to provide a direct combinatorial proof of the $q,t$-symmetry of $C_n(q,t)$, that is, to find a bijection $\phi$ from $\mathcal{D}_n$ to $\mathcal{D}_n$ such that $\mathrm{area}(\pi)=\mathrm{bounce}(\phi(\pi))$ and $\mathrm{bounce}(\pi)=\mathrm{area}(\phi(\pi))$.

There are generalizations of $C_n(q,t)$. In \cite{loehr2005conjectured}, the authors defined higher $q,t$-Catalan polynomials $C_{n}^{(k)}(q,t)=\left<\nabla^{k} e_n,e_n\right>$ and conjectured that it is the $q,t$-polynomial associated with pairs of statistics (area, bounce) on $k$-Dyck paths $\mathcal{D}_{k^n}$. The $q,t$-symmetry of $C_{n}^{(k)}(q,t)$ follows from the rational shuffle theorem of Mellit \cite{mellit2021toric}. However, a combinatorial proof of these $q,t$-symmetries remains an open problem. Naturally, some researchers have tried to extend these polynomials to other combinatorial objects. In \cite{d2022decorated}, the authors established a connection between Dyck paths and parallelogram polyominoes, defining similar statistics. In \cite{d2025shuffle}, they provided an expression of $C_n(q,t)$ in terms of sandpile models. In \cite{pappe2022area}, the authors introduced two new statistics, depth and ddinv, for classical Dyck paths $\mathcal{D}_{n}$ and proved that (area, depth) and (dinv, ddinv) are two $q,t$-symmetric pairs of statistics. The $q,t$-polynomial on classical Dyck paths $\mathcal{D}_{n}$, graded by (depth, ddinv), provides a new formula of $C_n(q,t)$.

To maintain brevity in the introduction, we will utilize certain notations that will be explained in detail in Section 2.

In this article, we focus on a generalization of classical Dyck paths $\mathcal{D}_{n}$ and
$k$-Dyck paths $\mathcal{D}_{k^n}$, called $\vec{k}$-Dyck paths $\mathcal{D}_{\vec{k}}$, where $\vec{k}$ is a vector of positive integers. This reduces to classical Dyck paths when $\vec{k}=(1,1,\cdots,1)$ and to $k$-Dyck paths when $\vec{k}=(k,k,\cdots,k)$. Xin and the second author introduced three statistics, dinv, area, and bounce for these objects and initiated the study of $q,t$-symmetry of $C_{\vec{k}}(q,t)$ and $C_{\mathcal{K}}(q,t)$, defined by pairs of statistics (dinv, area) or (area, bounce) on $\vec{k}$-Dyck paths. In a series of articles on this topic \cite{xin2023dinv, niu,chen,xin2025q}, the authors proved that $C_{\vec{k}}(q,t)$ is $q,t$-symmetric for $\ell(\vec{k})\leq 3$, $\vec{k}=(k,k,k,k)$, and $\vec{k}=(k,k,k,k,k)$. They also showed that $C_{\mathcal{K}}(q,t)$ is $q,t$-symmetric for $\ell(\lambda(\mathcal{K}))\leq 3$ and $\lambda(\mathcal{K})=((a+1)^s,a^{4-s})$, where $a$ and $s$ are positive integers. The $q,t$-symmetry of these polynomials was derived from their generating functions, which can be computed using explicit formulas for the bounce statistic and simplified with tools from MacMahon's Partition Analysis. However, the $q,t$-symmetry of $C_{\mathcal{K}}(q,t)$ does not generally hold for $\ell(\lambda(\mathcal{K}))\geq 4$.

Our main results are as follows: We generalize the depth statistic of classical Dyck paths, as introduced in \cite{pappe2022area}, to the case of $\vec{k}$-Dyck paths. The $q,t$-polynomials $\widetilde{C}_{\vec{k}}$ and $\widetilde{C}_{\mathcal{K}}(q,t)$, graded by (area,depth) exhibit good $q,t$-symmetry.

\begin{thm}\label{thm-aK}
$\widetilde{C}_{^a\mathcal{K}}(q,t)$ is $q,t$-symmetric for any $\vec{k}$ and positive $a$.
\end{thm}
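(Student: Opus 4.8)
The plan is to prove the $q,t$-symmetry of $\widetilde{C}_{^a\mathcal{K}}(q,t)$ by exhibiting an explicit involution on the set of $^a\mathcal{K}$-Dyck paths that interchanges the statistics $\mathrm{area}$ and $\mathrm{depth}$, thereby lifting the plane-tree involution of Pappe, Paul, and Schilling \cite{pappe2022area} to the non-uniform setting. Writing $\widetilde{C}_{^a\mathcal{K}}(q,t)=\sum_{\pi}q^{\mathrm{area}(\pi)}t^{\mathrm{depth}(\pi)}$, where the sum runs over all $^a\mathcal{K}$-Dyck paths $\pi$, it suffices to produce an involution $\phi$ satisfying $\mathrm{area}(\phi(\pi))=\mathrm{depth}(\pi)$; since $\phi\circ\phi=\mathrm{id}$, the complementary identity $\mathrm{depth}(\phi(\pi))=\mathrm{area}(\pi)$ then follows automatically, and $\widetilde{C}_{^a\mathcal{K}}(q,t)=\widetilde{C}_{^a\mathcal{K}}(t,q)$ is immediate.

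First I would fix a combinatorial model in which both statistics are transparent. Since $\mathrm{depth}$ is, via the filling and ranking algorithms of Xin and the second author, only a mild modification of $\mathrm{bounce}$, I would encode each $^a\mathcal{K}$-Dyck path as a labeled plane forest so that $\mathrm{area}$ counts the cells enclosed by the path while $\mathrm{depth}$ is read off from the bounce-type decomposition supplied by the ranking. The essential structural input is that the parameter $a$ makes the family $^a\mathcal{K}$ self-similar: a path admits a first-return-type decomposition into an inner and an outer piece, each of which is again of $^a\mathcal{K}$-type for a shifted vector $\vec{k}$ and scalar $a$. I expect this closure under decomposition to be exactly what distinguishes the tractable family $^a\mathcal{K}$ from a general $\mathcal{K}$, whose refined polynomial need not be symmetric.

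I would then define $\phi$ recursively along this decomposition: at the base case (the empty path, or the classical situation $\vec{k}=(1,\dots,1)$) it specializes to the PPS involution, and in the inductive step I apply $\phi$ to each piece and recombine. Three properties must then be verified: that $\phi$ maps $^a\mathcal{K}$-Dyck paths to $^a\mathcal{K}$-Dyck paths (respecting the step constraints imposed by $\vec{k}$ and $a$), that $\phi$ is an involution, and that it sends $\mathrm{area}$ to $\mathrm{depth}$. Additivity of $\mathrm{area}$ over the first-return decomposition is routine, and the involutivity reduces to the base case by induction.

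The main obstacle I anticipate is the compatibility of $\mathrm{depth}$ with the recursive decomposition. Unlike $\mathrm{area}$, $\mathrm{depth}$ is computed from a global ranking of the steps of the path, so I must show that the ranking of a recombined path agrees with the rankings of its inner and outer pieces up to a controlled shift governed by $a$ and $\vec{k}$. Proving this locality lemma — that the filling and ranking algorithms commute with the first-return decomposition for $^a\mathcal{K}$-paths — is the technical heart of the argument. Once it is established, the exchange $\mathrm{area}(\phi(\pi))=\mathrm{depth}(\pi)$ follows by induction, and the claimed $q,t$-symmetry is a direct consequence.
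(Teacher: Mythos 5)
Your overall strategy --- proving the symmetry via an explicit involution on $\mathcal{D}_{{}^a\mathcal{K}}$ that interchanges area and depth --- is exactly the paper's, but the construction you sketch has two genuine gaps. First, the decomposition underlying your recursion is wrong. You split a path at its first return into ``an inner and an outer piece, each of which is again of ${}^a\mathcal{K}$-type.'' When the first step is $S^a$ with $a>1$, the portion between $S^a$ and the first return to the axis runs from height $a$ to height $0$ and is not a Dyck path of any type; the correct generalization is a decomposition into $a+1$ sub-paths, $\pi=S^a\pi_1W\pi_2W\cdots\pi_aW\pi_{a+1}$, one hanging at each level. Moreover, those sub-paths are $\mathcal{K}'$-Dyck paths for arbitrary sub-multisets $\mathcal{K}'$ of the remaining steps and need not begin with $S^a$, so an induction whose base case is the PPS involution and whose hypothesis applies only to the family ${}^a\mathcal{K}$ does not close: the involution has to be built for all $\mathcal{K}$-Dyck paths simultaneously. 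Your rationale for why ${}^a\mathcal{K}$ is the tractable family (``closure under decomposition'') is also backwards: $\widetilde{C}_{\mathcal{K}}(q,t)$ is symmetric for every $\mathcal{K}$ (Theorem \ref{thm-K}); what singles out ${}^a\mathcal{K}$ is that the involution fixes the first step while possibly permuting the remaining parts of $\vec{k}$ (Corollary \ref{cor-omega}), which is consistent with individual $\widetilde{C}_{\vec{k}}$ failing to be symmetric.

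Second, the ``locality lemma'' that you correctly identify as the technical heart is exactly the content of the proof, and you give no argument for it; it is not routine, because the depth labels of a later piece are computed by the global ranking $\gamma_*$ (equivalently, by southwest diagonal travels), and these travels cross into earlier pieces, so one must show they terminate at controlled cells with a uniform shift. The paper establishes precisely this, not through a path recursion, but by transporting both statistics to labeled branch trees: under $\delta\circ\eta_*$ (Definition \ref{dfn-tableaux2tree}), area and depth become the local attachment statistics $a_j=a_i+e_{ij}$ and $d_j=d_i+s_{ij}$ with $e_{ij}+s_{ij}=k_i$ (Lemma \ref{lem-anotherareadepth}, Proposition \ref{prop-areadepth}), and the involution is the dual map that reattaches each branch at the complementary distance $k_i-m$ (Definition \ref{dfn-dual}), which visibly swaps $e_{ij}$ and $s_{ij}$ branch by branch. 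Your ``apply $\phi$ to each piece and recombine in reverse'' is in fact the path-language shadow of this dual, so your plan is repairable; but as written it assumes, rather than proves, the one statement that does all the work.
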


The fact that the sum of $q,t$-symmetric polynomials is also $q,t$-symmetric will lead to

\begin{thm}\label{thm-K}
$\widetilde{C}_{\mathcal{K}}(q,t)$ is $q,t$-symmetric for any $\vec{k}$.
\end{thm}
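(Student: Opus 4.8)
The plan is to obtain Theorem~\ref{thm-K} as an immediate corollary of Theorem~\ref{thm-aK}, using nothing beyond the refinement of the $\mathcal{K}$-Dyck paths recorded by the superscript $a$ together with the trivial observation that a finite sum of $q,t$-symmetric polynomials is again $q,t$-symmetric. Concretely, I would first establish the decomposition
$$
\widetilde{C}_{\mathcal{K}}(q,t) \;=\; \sum_{a\geq 1} \widetilde{C}_{^a\mathcal{K}}(q,t),
$$
in which $a$ runs over the admissible values of the refining parameter (all but finitely many terms vanishing for a fixed $\mathcal{K}$). Here $\widetilde{C}_{\mathcal{K}}(q,t)=\sum_{\pi} q^{\mathrm{area}(\pi)}t^{\mathrm{depth}(\pi)}$ is the generating function over all $\mathcal{K}$-Dyck paths $\pi$, and the family $\{{}^a\mathcal{K}\}_{a}$ sorts these paths into disjoint blocks according to the value $a$, with $\widetilde{C}_{^a\mathcal{K}}(q,t)$ being the same generating function restricted to the $a$-th block. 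Thus the right-hand side simply reassembles the defining sum of $\widetilde{C}_{\mathcal{K}}(q,t)$, and the identity reduces to checking that the blocks are pairwise disjoint, jointly exhaustive, and that $\mathrm{area}$ and $\mathrm{depth}$ are evaluated on each path independently of the block in which it lies.

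Granting the decomposition, the conclusion is automatic. By Theorem~\ref{thm-aK} each summand is symmetric, $\widetilde{C}_{^a\mathcal{K}}(q,t)=\widetilde{C}_{^a\mathcal{K}}(t,q)$, and since swapping $q$ and $t$ commutes with a finite sum we obtain
$$
\widetilde{C}_{\mathcal{K}}(q,t) \;=\; \sum_{a\geq 1}\widetilde{C}_{^a\mathcal{K}}(q,t) \;=\; \sum_{a\geq 1}\widetilde{C}_{^a\mathcal{K}}(t,q) \;=\; \widetilde{C}_{\mathcal{K}}(t,q),
$$
which is precisely the $q,t$-symmetry asserted in Theorem~\ref{thm-K}.

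Since the algebra is trivial, the entire burden rests on the decomposition step, and this is where I expect the only genuine work to lie. One must pin down exactly which feature of a $\mathcal{K}$-Dyck path the parameter $a$ encodes, verify that every $\mathcal{K}$-Dyck path is assigned a unique value of $a$ so that the blocks genuinely partition the path set, and confirm that restricting $\widetilde{C}_{\mathcal{K}}$ to a fixed value of $a$ reproduces $\widetilde{C}_{^a\mathcal{K}}$ verbatim, in particular that the area and depth statistics are unaffected by the restriction. In all likelihood this identity is essentially built into the definition of $^a\mathcal{K}$, so that the verification is pure bookkeeping; in any case, no further ideas are needed, because all of the combinatorial difficulty -- the actual $q,t$-symmetry of each refined block -- has already been supplied by Theorem~\ref{thm-aK}.
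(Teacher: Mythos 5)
Your strategy coincides with the paper's: partition the $\mathcal{K}$-Dyck paths according to the size of the first north step, recognize each block as a set covered by Theorem~\ref{thm-aK}, and invoke the closure of $q,t$-symmetry under finite sums. The paper gives essentially no more detail than this one-line reduction, so in spirit you have reproduced its proof.

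However, the decomposition you display is false as written, and it is exactly the step you yourself call ``the entire burden.'' With the paper's definition, $^a\mathcal{K}$ consists of the compositions $(a,j_1,\dots,j_{\ell})$ where $(j_1,\dots,j_{\ell})$ ranges over \emph{all} of $\mathcal{K}$; these have length $\ell(\vec{k})+1$ and size $|\vec{k}|+a$, so $\mathcal{D}_{^a\mathcal{K}}$ is not a subset of $\mathcal{D}_{\mathcal{K}}$ at all, and moreover $\widetilde{C}_{^a\mathcal{K}}(q,t)\neq 0$ for \emph{every} positive integer $a$ (no terms vanish), so $\sum_{a\geq 1}\widetilde{C}_{^a\mathcal{K}}(q,t)$ is not even a polynomial, let alone equal to $\widetilde{C}_{\mathcal{K}}(q,t)$. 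The correct bookkeeping is: for each distinct part $a$ of $\vec{k}$, let $\vec{k}\setminus a$ denote $\vec{k}$ with one occurrence of $a$ deleted and let $\mathcal{K}_a$ be its rearrangement class; then the rearrangements of $\vec{k}$ whose first entry equals $a$ are precisely the elements of $^a\mathcal{K}_a$, whence
$$\widetilde{C}_{\mathcal{K}}(q,t)=\sum_{a\in\{k_1,\dots,k_{\ell}\}}\widetilde{C}_{^a\mathcal{K}_a}(q,t),$$
a finite sum indexed by the distinct parts of $\vec{k}$. Each summand is $q,t$-symmetric by Theorem~\ref{thm-aK} applied to the vector $\vec{k}\setminus a$ with the letter $a$ prepended, and your concluding argument then goes through verbatim. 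So what is missing is not an idea but the correct instantiation of the notation $^a\mathcal{K}$; as stated, your key identity does not hold.
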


We also propose two conjectures on $q,t$-symmetry for $\vec{k}$-Dyck paths.

\begin{cnj}
$\widetilde{C}_{\mathcal{K}^b}(q,t)$ is $q,t$-symmetric for any $\vec{k}$ and positive $b$.
\end{cnj}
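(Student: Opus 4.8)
The plan is to mirror the strategy behind Theorem~\ref{thm-aK}, transporting the area--depth involution from the left-decorated family ${}^{a}\mathcal{K}$ to the right-decorated family $\mathcal{K}^{b}$. The proof of Theorem~\ref{thm-aK} produces (via the plane-tree involution of Pappe--Paul--Schilling, adapted to the depth statistic on $\vec{k}$-Dyck paths) a bijection $\phi_{a}$ on ${}^{a}\mathcal{K}$-Dyck paths with $\mathrm{area}(\pi)=\mathrm{depth}(\phi_{a}(\pi))$ and $\mathrm{depth}(\pi)=\mathrm{area}(\phi_{a}(\pi))$. These terms $\widetilde{C}_{{}^{a}\mathcal{K}}$ assemble into $\widetilde{C}_{\mathcal{K}}$ and are individually symmetric, which is exactly the refinement by the prepended part that yields Theorem~\ref{thm-K}. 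The content of the conjecture is that the \emph{dual} refinement, recorded by the appended part $b$ rather than the prepended part $a$, is also term-by-term symmetric; this is not automatic, since two different refinements of one symmetric polynomial need not both be symmetric.

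First I would make explicit how the two statistics change under appending a part of size $b$. Because depth is computed through the filling and ranking algorithms of Xin and the second author, which scan a path in a fixed direction, appending a part perturbs the \emph{final} stages of the ranking procedure rather than its initial data. I would isolate the induced corrections $\mathrm{area}(\pi)\mapsto\mathrm{area}(\pi)+c_{\mathrm{a}}(b)$ and $\mathrm{depth}(\pi)\mapsto\mathrm{depth}(\pi)+c_{\mathrm{d}}(b,\pi)$. The key phenomenon to pin down is that $c_{\mathrm{a}}(b)$ is a clean constant shift while $c_{\mathrm{d}}(b,\pi)$ may genuinely depend on $\pi$; reconciling this dependence is precisely what an involution $\phi^{b}$ would have to engineer.

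Second, I would try a reversal bijection $\rho$ that reverses the vector $\vec{k}=(k_{1},\dots,k_{n})$ together with the path, so that appending $b$ becomes prepending $b$: a $\mathcal{K}^{b}$-Dyck path is sent to a ${}^{b}\mathcal{K}^{\mathrm{rev}}$-Dyck path for the reversed composition $\mathcal{K}^{\mathrm{rev}}$. If $\rho$ carried the pair $(\mathrm{area},\mathrm{depth})$ either to itself or to its swap, the conjecture would follow at once from Theorem~\ref{thm-aK} applied to $\mathcal{K}^{\mathrm{rev}}$. I expect $\rho$ to behave well on area but not on depth, so the realistic outcome is a residual correction that must be tracked and absorbed, rather than an immediate reduction.

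The main obstacle is exactly the directionality of depth. The ranking algorithm accumulates data as it scans, so a decoration at the terminal end constrains the algorithm's output \emph{after} all earlier choices are fixed, whereas a decoration at the initial end merely sets clean boundary data; this asymmetry is why the prepended case is a theorem and the appended case remains open. I therefore expect the decisive step to be a local analysis of how the ranking algorithm terminates on $\mathcal{K}^{b}$-paths, showing that the $\pi$-dependence of $c_{\mathrm{d}}(b,\pi)$ is matched by a compensating rearrangement, most plausibly a modification of the plane-tree involution acting only on the subtree that records the final $b$ steps. As a practical first move I would verify the conjecture by direct generating-function computation, using MacMahon's Partition Analysis as in the cited works, for $\ell(\vec{k})\le 3$ and small $b$, both to confirm the statement and to read off the correct form of the sought involution.
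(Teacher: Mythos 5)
This statement is left as an open conjecture in the paper, so there is no proof of it to compare against; more to the point, your proposal does not close it either. What you have written is a research plan, not a proof: every route you sketch terminates in an acknowledged unresolved step (``a residual correction that must be tracked and absorbed,'' ``most plausibly a modification of the plane-tree involution,'' verification ``for $\ell(\vec{k})\le 3$ and small $b$''), and no involution, bijection, or identity is actually constructed. Beyond incompleteness, two of your technical premises fail. First, there is no canonical map from $\mathcal{D}_{\mathcal{K}}$ to $\mathcal{D}_{\mathcal{K}^b}$ under which your corrections $c_{\mathrm{a}}(b)$ and $c_{\mathrm{d}}(b,\pi)$ would be defined: a path in $\mathcal{D}_{(\vec{j},b)}$ is a truncation of a path in $\mathcal{D}_{\vec{j}}$ at an arbitrary height $h\ge 0$ with $S^{b}W^{h+b}$ attached, and the appended run contributes area $h$, which varies with the attachment point; so area is \emph{not} shifted by ``a clean constant,'' and the asymmetry you build your analysis on is misstated. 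Second, your reversal map $\rho$ is not well defined: reversing a $\vec{k}$-Dyck path exchanges the roles of north runs and unit east steps, producing a path with prescribed east-run lengths and unit north steps, which is not a ${}^{b}\mathcal{K}^{\mathrm{rev}}$-Dyck path, so the proposed reduction to Theorem~\ref{thm-aK} cannot even be set up. Finally, your description of the paper's mechanism is off: the proof of Theorem~\ref{thm-aK} does not build a separate involution for each $a$; it builds one involution $\omega$ on all of $\mathcal{D}_{\mathcal{K}}$ via the dual algorithm on labeled branch trees, and Corollary~\ref{cor-omega} says $\omega$ fixes the \emph{first} part $k_1$, hence restricts to ${}^{a}\mathcal{K}$. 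The entire difficulty of the conjecture is that $\omega$ does not fix the last part.

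There is, however, a finishing argument available from results the paper already proves, which your outline misses entirely because it never invokes Proposition~\ref{prop-k1k2}. By that proposition, $\widetilde{C}_{(\vec{j},b)}=\widetilde{C}_{(\vec{j},c)}$ for every $\vec{j}$ and all positive $b,c$; summing over $\vec{j}$ in the rearrangement class $\mathcal{K}_P$ of a multiset $P$ shows that $Y_P:=\widetilde{C}_{\mathcal{K}_P^{\,b}}$ does not depend on the appended value $b$. Partitioning the rearrangement class $\mathcal{K}_N$ of any multiset $N$ according to the last entry gives $\widetilde{C}_{\mathcal{K}_N}=\sum_{u}Y_{N\setminus\{u\}}$, where $u$ runs over the distinct values of $N$ and $N\setminus\{u\}$ removes one copy of $u$. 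Now induct on the number $\Phi(P)$ of parts of $P$ strictly below $\max P$. If $\Phi(P)=0$, all parts equal some $v$ and $Y_P=\widetilde{C}_{(v,\dots,v,v)}$ is $q,t$-symmetric by Theorem~\ref{thm-K} applied to the singleton class. If $\Phi(P)=j\ge 1$, set $v=\max P$ and $N=P\cup\{v\}$; then $\widetilde{C}_{\mathcal{K}_N}=Y_P+\sum_{u\ne v}Y_{(P\setminus\{u\})\cup\{v\}}$, and each multiset $(P\setminus\{u\})\cup\{v\}$ has $\Phi$-value $j-1$, so by the inductive hypothesis and Theorem~\ref{thm-K} the polynomial $Y_P$ is a difference of $q,t$-symmetric polynomials, hence $q,t$-symmetric. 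This resolves exactly the last-part/first-part asymmetry you identified, but by inclusion--exclusion over rearrangement classes rather than by the new involution your plan hopes for and never produces.
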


\begin{cnj}
$\widetilde{C}_{^a\mathcal{K}^b}(q,t)$ is $q,t$-symmetric for any $\vec{k}$, and positive $a$ and $b$.
\end{cnj}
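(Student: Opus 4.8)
The plan is to attack the finest refinement directly, since it implies the coarser symmetries: because $\widetilde{C}_{^a\mathcal{K}}(q,t)=\sum_{b}\widetilde{C}_{^a\mathcal{K}^b}(q,t)$ and $\widetilde{C}_{\mathcal{K}^b}(q,t)=\sum_{a}\widetilde{C}_{^a\mathcal{K}^b}(q,t)$, a $q,t$-symmetry of $\widetilde{C}_{^a\mathcal{K}^b}(q,t)$ for each fixed pair $(a,b)$ recovers both Theorem~\ref{thm-aK} and the preceding conjecture by summing. Here $a$ and $b$ are the two boundary parameters marking, respectively, the initial and terminal data of a $\mathcal{K}$-Dyck path in its bounce/depth decomposition. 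The strategy is thus to strengthen the argument behind Theorem~\ref{thm-aK}, which freezes only the left parameter $a$, so that it simultaneously respects the right parameter $b$.

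First I would take the area--depth swapping involution $\Phi$ underlying Theorem~\ref{thm-aK} and determine how it acts on the terminal statistic $b$. Two outcomes must be distinguished: either $\Phi$ already preserves $b$ on each fibre with $a$ fixed, in which case restricting it to the paths with both $a$ and $b$ fixed finishes the proof; or $\Phi$ permutes the $b$-fibres in a controlled way, for instance complementing $b$ against a quantity determined by $a$ and the path length. In the latter case I would rebuild the involution from the plane-tree / ranking-algorithm description of depth: cut each path into its initial $a$-block, its interior, and its terminal $b$-block, apply the core area--depth swap to the interior alone, and reattach the two boundary blocks so that both $a$ and $b$ are restored while $\mathrm{area}$ and $\mathrm{depth}$ are exchanged.

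As an independent check, and as a fallback should the involution prove unwieldy, I would set up the doubly-refined generating function
\begin{equation*}
G(q,t,x,y)\;=\;\sum_{a,b}\widetilde{C}_{^a\mathcal{K}^b}(q,t)\,x^{a}y^{b},
\end{equation*}
built from the explicit bounce/depth formula furnished by the filling and ranking algorithms for $\vec{k}$-Dyck paths. Clearing the internal summation variables of that formula through MacMahon's Partition Analysis (the $\Omega$ operator) leaves the boundary parameters $a,b$ as the exponents of $x,y$; the goal is then an invariance of $G$ under $q\leftrightarrow t$ fixing $x$ and $y$, from which the coefficientwise $q,t$-symmetry of each $\widetilde{C}_{^a\mathcal{K}^b}(q,t)$ follows.

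The main obstacle is the simultaneous control of both boundaries. With only $a$ frozen, the terminal behaviour of the path is free and $\Phi$ has room to act; fixing $b$ as well over-constrains it, and the naive restriction of $\Phi$ generally leaves the set of paths with $a$ and $b$ both fixed. In the generating-function picture the same difficulty surfaces as a genuine two-parameter Partition Analysis: with both $x$ and $y$ present the $\Omega$-elimination no longer telescopes into a manifestly symmetric rational function. Moreover, the failure of $q,t$-symmetry for $C_{\mathcal{K}}$ in the (area, bounce) grading once $\ell(\lambda(\mathcal{K}))\geq 4$ warns that the finer structure of depth, not bounce, must be used essentially; pinning down the correct coupling of the two boundary decompositions under the area--depth exchange is where the real work lies.
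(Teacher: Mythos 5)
You should first be aware that the paper does not prove this statement: it is posed as a conjecture and left open (the paper's involution only yields Theorem \ref{thm-aK} and, by summation, Theorem \ref{thm-K}), so your text is being measured against an open problem, and as written it is a research plan rather than a proof, with genuine gaps at each decisive step. The central gap is that neither branch of your dichotomy is established, and the first branch is in fact known to fail: the involution $\omega$ (your $\Phi$) fixes only the \emph{first} part of $\vec{k}$ (Corollary \ref{cor-omega}), and the paper explicitly remarks in Section 5 that $\omega$ is not an involution on a fixed composition --- it can send $\pi\in\mathcal{D}_{(a,b,c)}$ to $\omega(\pi)\in\mathcal{D}_{(a,c,b)}$ --- so it certainly does not preserve the last part, and nothing in the dual algorithm controls which labeled path graph carries the largest red label after relabeling; no ``controlled permutation of $b$-fibres'' is known. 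Your proposed repair --- cut off the initial $a$-block and terminal $b$-block, dualize the interior, and reattach --- is not defined as stated and would not obviously exchange the statistics: area and depth are global, computed by recursive offsets $a_j=a_i+e_{ij}$ and $d_j=d_i+s_{ij}$ propagating through the whole tree, so a swap performed on an ``interior'' does not commute with reattaching boundary blocks. Producing a reattachment rule that restores both boundary parts while exchanging area and depth is precisely the open content of the conjecture; your plan assumes it rather than supplies it.

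The generating-function fallback does not close the gap either: the paper's Partition Analysis succeeds only for $\ell(\vec{k})=3$, where a path is encoded by two free parameters $\ell_1,\ell_2$ with explicit closed formulas for area and depth; for general $\vec{k}$ the number of $\Omega$-variables and the case analysis for depth grow with $\ell(\vec{k})$, and no closed formula for the depth generating function is available, which is exactly why the statement is conjectural. Two smaller corrections: your reduction identity $\widetilde{C}_{^a\mathcal{K}}(q,t)=\sum_{b}\widetilde{C}_{^a\mathcal{K}^b}(q,t)$ is misstated, since the two sides involve compositions of different lengths; the correct version removes one part from the multiset, namely $\widetilde{C}_{^a\mathcal{K}}(q,t)=\sum_{c}\widetilde{C}_{^a(\mathcal{K}\setminus c)^{c}}(q,t)$ over distinct part values $c$ of $\vec{k}$ (and similarly for $\mathcal{K}^b$ by splitting off the first part), which does give your intended implication that the doubly-refined symmetry implies the coarser ones. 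Also, $a$ and $b$ are the fixed first and last parts of the composition $\vec{k}$ (the lengths of the first and last north steps), not ``initial and terminal data of the bounce/depth decomposition'' of the path. Note finally that the paper's own Example 1.5, where $\widetilde{C}_{(1,1,3,1)}(q,t)$ and $\widetilde{C}_{(1,3,1,1)}(q,t)$ fail symmetry individually but their sum $\widetilde{C}_{^1\{(1,3),(3,1)\}^1}(q,t)$ is symmetric, shows that the union over middle rearrangements is essential --- any viable involution must genuinely mix the fibres over distinct middle compositions while pinning both endpoints, a constraint your sketch does not yet engage.
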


\begin{exa}
Computer data show that $\widetilde{C}_{(1,1,3,1)}(q,t)$ and $\widetilde{C}_{(1,3,1,1)}(q,t)$ are not $q,t$-symmetric. However $\widetilde{C}_{(1,1,3,1)}(q,t)+\widetilde{C}_{(1,3,1,1)}(q,t)$ is $q,t$-symmetric since
$$\{(1,1,3,1),(1,3,1,1)\}=^1\{(1,3),(3,1)\}^1,$$ and $\{(1,3),(3,1)\}$ represents the set of all rearrangements of $\vec{k}=(1,3)$ or $(3,1)$. Since $\widetilde{C}_{(1,1,1,3)}(q,t)$ is $q,t$-symmetric, it follows that $\widetilde{C}_{(1,1,3,1)}(q,t)+\widetilde{C}_{(1,3,1,1)}(q,t)+\widetilde{C}_{(1,1,1,3)}(q,t)$ is also $q,t$-symmetric, as we have the following expression: $$\{(1,1,3,1),(1,3,1,1),(1,1,1,3)\}=^1\{(1,3,1),(3,1,1),(1,1,3)\}.$$
\end{exa}

The paper is organized as follows. In Section 2, we provide the necessary definitions of $\vec{k}$-Dyck paths and review some previous combinatorial statistics on them.
In Section 3, we introduce the depth statistic for $\vec{k}$-Dyck paths and define our $q,t$-polynomials $\widetilde{C}_{\vec{k}}(q,t)$ and $\widetilde{C}_{\mathcal{K}}(q,t)$. We discuss the $q,t$-symmetry of $\widetilde{C}_{\mathcal{K}}(q,t)$ in Section 4. Our proof constructs an involution that interchanges the area and depth based on a dual algorithm on the set of labeled branch trees $\mathcal{LBT}_{\mathcal{K}}$, which is bijective with $\mathcal{D}_{\mathcal{K}}$. In Section 5, we explore the relationship between $\widetilde{C}_{\vec{k}}(q,t)$ and $C_{\vec{k}}(q,t)$ for some special choices of $\vec{k}$. We further investigate the case of $\vec{k}$ with $\ell(\vec{k})\leq 3$ and obtain their $q,t$-symmetry in two ways: first, by providing a more direct involution, and second, by analyzing their generating functions, as the authors do in \cite{xin2025q}. Finally, we discuss some further directions for the study of $q,t$-symmetry in $\vec{k}$-Dyck paths.

\section{Background and Definitions}

We primarily follow the definitions and notations from \cite{xin2019sweep} and \cite{xin2023dinv}.

\subsection{Two models of $\vec{k}$-Dyck paths}

Given a vector $\vec{k}=(k_1,k_2,\cdots,k_{\ell})$ of positive integers, we denote by $\ell(\vec{k}):=\ell$ the \textbf{length} of $\vec{k}$, and $|\vec{k}|:=k_1+k_2+\cdots+k_{\ell}$ the \textbf{size} of $\vec{k}$. Such $\vec{k}$ is also called a \textbf{composition} or \textbf{ordered partition}.

\begin{dfn}[Visual path model]
A \textbf{$\vec{k}$-Dyck path} is a lattice path from $(0,0)$ to $(|\vec{k}|+\ell(\vec{k}),0)$ that never goes below the horizontal axis with up steps (red arrows) $(1,k_i)$, $1\leq i\leq \ell(\vec{k})$ from left to right and down steps (blue arrows) $(1,-1)$.
\end{dfn}

\begin{dfn}[Word model]
We can identify a $\vec{k}$-Dyck path $\pi$ with its \textbf{$SW$-word} $\pi=\pi_1\pi_2\cdots \pi_{|\vec{k}|+\ell(\vec{k})}$, where each $\pi_i$ is either $S^{k_j}$ or $W$, depending on whether the $i$-th vertex of $\pi$ corresponds to the $j$-th \textbf{South end} (of the $j$-th North step) or the \textbf{West end} (of an East step).
\end{dfn}

\begin{exa}
\end{exa}
\begin{figure}[H]
    \centering
    \includegraphics[width=0.6\linewidth]{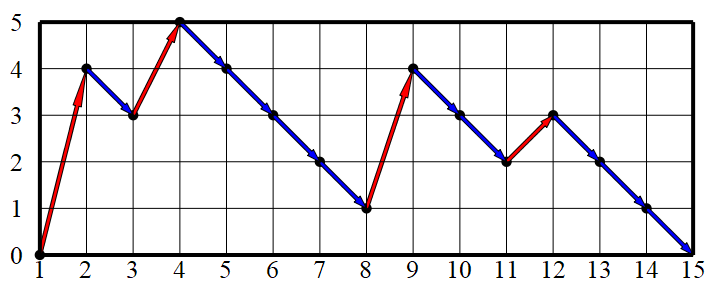}
    \caption{$\vec{k}=(4,2,3,1)$ and $\pi=S^4 W S^2 W W W W S^3 W W S^1 W W W$.}
    \label{fig-kDyck}
\end{figure}

Denote by $\mathcal{D}_{\vec{k}}$ the set of all $\vec{k}$-Dyck paths. Let $\mathcal{K}$ be the set of all rearrangements of $\vec{k}$. We refer to the paths in $\mathcal{D}_{\mathcal{K}}$, the union of all $\mathcal{D}_{\vec{k}}$, as $\mathcal{K}$-Dyck paths. Let $\lambda(\mathcal{K}):=\lambda(\vec{k})$ denote the common partition obtained by ordering the entries of $\vec{k}$ in decreasing order.

For any $\vec{k}$, and positive $a$ and $b$, we define three operations on $\mathcal{K}$.
\begin{align*}
^a\mathcal{K}:&=\{(a,j_1,j_2,\cdots,j_{\ell}): \ (j_1,j_2,\cdots,j_{\ell})\in \mathcal{K}\},\\
\mathcal{K}^{b}:&=\{(j_1,j_2,\cdots,j_{\ell},b): \ (j_1,j_2,\cdots,j_{\ell})\in \mathcal{K}\},\\
^a\mathcal{K}^{b}:&=\{(a,j_1,j_2,\cdots,j_{\ell},b):\ (j_1,j_2,\cdots,j_{\ell})\in \mathcal{K}\}.
\end{align*}

\begin{exa}
If $\vec{k}=\{1,3,2\}$, then
\begin{align*}
\mathcal{K}&=\{(1,2,3),(1,3,2),(2,1,3),(2,3,1),(3,1,2),(3,2,1)\},\\
^4\mathcal{K}&=\{(4,1,2,3),(4,1,3,2),(4,2,1,3),(4,2,3,1),(4,3,1,2),(4,3,2,1)\},\\
\mathcal{K}^{2}&=\{(1,2,3,2),(1,3,2,2),(2,1,3,2),(2,3,1,2),(3,1,2,2),(3,2,1,2)\},\\
^4\mathcal{K}^2&=\{(4,1,2,3,2),(4,1,3,2,2),(4,2,1,3,2),(4,2,3,1,2),(4,3,1,2,2),(4,3,2,1,2)\}.
\end{align*}
\end{exa}

If $\vec{k}=(1,1,\cdots,1)$, then $\mathcal{D}_{\mathcal{K}}:=\mathcal{D}_{n}$ is the set of all classical Dyck paths. If $\vec{k}=(k,k,\cdots,k)$, then $\mathcal{D}_{\mathcal{K}}:=\mathcal{D}_{k^n}$ is the set of all $k$-Dyck paths. In particular,
\begin{equation*}
\left|\mathcal{D}_{n}\right|=\frac{1}{n+1}\binom{2n}{n}, \quad \text{and } \left|\mathcal{D}_{k^n}\right|=\frac{1}{kn+1}\binom{(k+1)n}{n},
\end{equation*}
which are called the $n$-th \textbf{Catalan number} and \textbf{Fuss-Catalan number} respectively.

In \cite{rukavicka2011generalized}, the author provided an explicit formula for $|\mathcal{D}_{\mathcal{K}}|$. Let $\lambda(\mathcal{K})=1^{m_1}2^{m_2}\cdots |\vec{k}|^{m_{|\vec{k}|}}$, where $m_i$ represents the multiplicity of part $i$. Then, the number of all $\mathcal{K}$-Dyck paths is given by the following expression:
\begin{equation*}
\left|\mathcal{D}_{\mathcal{K}}\right|=\frac{1}{|\vec{k}|+1}\binom{|\vec{k}|+m_1+\cdots+m_{|\vec{k}|}}{|\vec{k}|, m_1,\cdots,m_{|\vec{k}|}}.
\end{equation*}

\subsection{Combinatorial statistics of $\vec{k}$-Dyck paths}

We associate each arrow with a \textbf{starting rank} (resp. \textbf{ending rank}). The starting rank of each arrow is defined by setting $r_1 = 0$ for the first arrow of $\pi$. For $ 1\leq i \leq |\vec{k}|+\ell(\vec{k})-1$, the starting rank $r_{i+1}$ is recursively assigned as follows: If the $i$-th arrow $\pi_i=S^{k_j}$, then $r_{i+1}:=r_{i}+k_j$; Otherwise, if $\pi_i=W$, then $r_{i+1}:=r_{i}-1$. It is exactly equal to the $y$-coordinate of the starting point of each arrow. The \textbf{starting rank sequence} of $\pi\in\mathcal{D}_{\vec{k}}$ is denoted by $r(\pi)=(r_1,r_2,\cdots,r_{|\vec{k}|+\ell(\vec{k})})$. Similarly, the \textbf{ending rank} of each arrow is defined to be the $y$-coordinate of its ending point. The \textbf{ending rank sequence} is denoted by $\dot{r}(\pi)=(\dot{r}_1,\dot{r}_2,\cdots,\dot{r}_{|\vec{k}|+\ell(\vec{k})}))$. It is clear that $\dot{r}(\pi)=(r_2,\cdots,r_{|\vec{k}|+\ell(\vec{k})},0)$.

\begin{exa}
In Figure \ref{fig-kDyck},
\begin{align*}
r(\pi)=(0,4,3,5,4,3,2,1,4,3,2,3,2,1),\\
\dot{r}(\pi)=(4,3,5,4,3,2,1,4,3,2,3,2,1,0).
\end{align*}
\end{exa}

\begin{dfn}
Given $\pi\in \mathcal{D}_{\vec{k}}$, the \textbf{area sequence} of $\pi$ is $a(\pi):=(a_1,a_2,\cdots,a_{\ell(\vec{k})})$, where $a_i$ denotes the starting rank of the $i$-th red arrow. Define the \textbf{area} of $\pi$ by $$\mathrm{area}(\pi):=a_1+\cdots+a_{\ell(\vec{k})}.$$
\end{dfn}

Geometrically, the area of $\pi$ counts the
number of complete squares between the start of red arrows and the horizontal axis. It is also the number of complete squares between the lattice path and the main diagonal, in rows containing a south end of a north step.

\begin{exa}
In Figure 1, we have $a(\pi)=(0,3,1,2)$ and $\mathrm{area}(\pi)=6$.
\end{exa}

Given $\pi\in \mathcal{D}_{\vec{k}}$, we use the notation $W_i$ (resp. $S_j$) to refer to the $i$-th blue (resp. $j$-th red) arrow in $\pi$. For any two arrows $A$ and $B$ in $\pi$, we say that $A<B$ if $A$ is to the left of $B$ in $\pi$.

The \textbf{dinv} consists of two parts, which can be described geometrically as follows: The first summand, called \textbf{sweep dinv}, is the number of pairs
$(W_i,S_j)$ with $W_i<S_j$ such that $S_j$ starts at a rank weakly below the start of $W_i$ and ends strictly above the end of $W_i$. The second part, called \textbf{red dinv}, specifies that each pair of red arrows $(S_i,S_j)$ with $S_i<S_j$ contributes $|\dot{r}(S_i)-\dot{r}(S_j)|$ if $S_j$ starts at a rank weakly below the start of $S_i$ and ends strictly above the end of $S_i$, or if $S_j$ starts at a rank strictly above the start of $S_i$ and ends strictly below the end of $S_i$.

\begin{dfn}
Given $\pi\in \mathcal{D}_{\vec{k}}$,
\begin{align*}
dinv(\pi)=\sum_{W_i<S_j}\chi(0\leq r(W_i)&-r(S_j)\leq k_j)\\
&+\sum_{S_i<S_j}\chi(r(S_i)\geq r(S_j) \ \& \ \dot{r}(S_j)>\dot{r}(S_i))(\dot{r}(S_j)-\dot{r}(S_i))\\
&+\sum_{S_i<S_j}\chi(r(S_i)< r(S_j) \ \& \ \dot{r}(S_j)<\dot{r}(S_i))(\dot{r}(S_i)-\dot{r}(S_j)).
\end{align*}
\end{dfn}

\begin{exa}
In Figure \ref{fig-kDyck}, all pairs of arrows contributing to the sweep dinv are
\begin{align*}
(W_1,S_2),(W_1,S_3), (W_3,S_3), (W_4,S_3),(W_4,S_4),(W_5,S_3),(W_5,S_4),(W_7,S_4).
\end{align*}
$(S_1,S_4)$ contributes 1 red dinv, and $(S_3,S_4)$ contributes 1 red dinv. Thus $\mathrm{dinv}(\pi)=10$.
\end{exa}

To define \rm{bounce}, we need two algorithms. The first is called the \textbf{Filling Algorithm $\eta$}, which was originally introduced by Garsia and Xin \cite{garsia2020sweep} for Fuss rational Dyck paths and later generalized to the case of $\vec{k}$-Dyck paths in \cite{xin2019sweep}.

Let $\mathcal{F}_{\vec{k}}$ denote the set of all filling tableaux where the
$i$-th column contains $k_i+1$ entries for $1\leq i\leq \ell(\vec{k})$.  Each tableau is a bijective filling with labels $1,2,\cdots,|\vec{k}|+\ell(\vec{k})$, such that the entries in each row are increasing from left to right and the entries in each column are increasing from top to bottom. Furthermore, for any $a<b<c<d$, where $d$ is immediately below $a$, then the labels $b$ and $c$ cannot appear in the same column.

\begin{dfn}\cite[\textbf{Filling Algorithm $\eta$}]{xin2019sweep}
\label{al-Filling Algorithm}
\noindent
Input: The SW-word of a $\vec{k}$-Dyck path $\pi\in \mathcal{D}_{\vec{k}}$.
\noindent
Output: A filling tableau $\eta(\pi)\in \mathcal{F}_{\vec{k}}$.

\begin{enumerate}
\item   Start by placing a $1$ in the top row and the first column.
\item  If the second letter in $\pi$ is an $S^*$, place a $2$ at the top of the second column.
\item   If the second letter in $\pi$ is a $W$, place $2$ below the $1$.
\item  At any stage, the entry at the bottom of the $i$-th column but not in row $k_i+1$ will be called \textbf{active}.
\item  Having placed $1,2,\cdots i-1$, place $i$ immediately below the \textbf{smallest  active} entry if the $i$-th letter in $\pi$ is a $W$; otherwise, place $i$ at the top of the first empty column.
\item   Repeat this process recursively until $1,2,\cdots ,|\vec{k}|+\ell(\vec{k})$ have  all  been placed.
\end{enumerate}
\end{dfn}

\begin{thm}\cite[Theorem 2.6]{xin2019sweep}
The \textbf{Filling Algorithm $\eta$} defines a bijection from $\mathcal{D}_{\vec{k}}$ to $\mathcal{F}_{\vec{k}}$.
\end{thm}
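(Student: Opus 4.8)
The plan is to prove that the Filling Algorithm $\eta$ is a bijection from $\mathcal{D}_{\vec{k}}$ to $\mathcal{F}_{\vec{k}}$ by constructing an explicit inverse map and verifying that $\eta$ and this inverse are mutually inverse. First I would check that $\eta$ is \emph{well-defined}, i.e. that for every $\pi\in\mathcal{D}_{\vec{k}}$ the output $\eta(\pi)$ actually lies in $\mathcal{F}_{\vec{k}}$. Concretely I must verify three things: that each column $i$ ends up with exactly $k_i+1$ entries; that rows increase left-to-right and columns increase top-to-bottom; and that the ``forbidden configuration'' condition holds (for $a<b<c<d$ with $d$ immediately below $a$, the labels $b,c$ never share a column). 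The column-count and monotonicity claims follow by induction on the number of placed labels: a new label $i$ is placed either at the top of a fresh column (when $\pi_i=S^{\ast}$) or directly below the smallest active entry (when $\pi_i=W$), and in both cases the insertion rule keeps columns increasing downward; rows increase because labels are inserted in increasing numerical order and each $W$-insertion goes under the \emph{smallest} available active cell, which is the key point forcing the left-to-right row order.

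The heart of the argument is constructing the inverse $\eta^{-1}:\mathcal{F}_{\vec{k}}\to\mathcal{D}_{\vec{k}}$. Given a tableau $T\in\mathcal{F}_{\vec{k}}$, I would read off the word $\pi=\pi_1\cdots\pi_{|\vec{k}|+\ell(\vec{k})}$ by declaring $\pi_i=S^{k_j}$ if the label $i$ occupies the top cell of the $j$-th column, and $\pi_i=W$ otherwise (i.e.\ if $i$ sits strictly below the top row). I then need to show that the resulting word is genuinely the SW-word of a $\vec{k}$-Dyck path, meaning the associated starting-rank sequence never dips below $0$; this is where the forbidden-configuration condition on $\mathcal{F}_{\vec{k}}$ does its work, guaranteeing that the greedy ``place under the smallest active entry'' step of $\eta$ always has a legal active cell available, so that no East step is forced below the diagonal.

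For the verification that the two maps are inverse, I would argue that applying $\eta$ to the word read off from $T$ reconstructs $T$ cell by cell. The essential lemma is that, at each stage of $\eta$, the ``smallest active entry'' is uniquely determined by the partially built tableau, so the placement of label $i$ is forced; running this forward on $\eta^{-1}(T)$ must therefore rebuild $T$ exactly, and running $\eta^{-1}$ on $\eta(\pi)$ recovers each letter of $\pi$ since top-of-column cells correspond precisely to the $S^{\ast}$-steps. The main obstacle I anticipate is the forbidden-configuration condition: showing both that $\eta(\pi)$ always satisfies it (so $\eta$ lands in $\mathcal{F}_{\vec{k}}$) and, conversely, that this condition is exactly what is needed for the greedy algorithm to succeed on an arbitrary tableau without getting stuck. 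Pinning down the correspondence between ``active cells remaining available'' and ``rank sequence staying nonnegative'' is the delicate step, and I would isolate it as a separate lemma about how the active-cell structure evolves under each letter of $\pi$.
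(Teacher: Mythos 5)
Your overall plan (check well-definedness, construct the inverse by reading off top-of-column labels as $S^{\ast}$-steps, then argue the placements are forced) is the same route as the standard proof, which this paper does not reproduce for $\eta$ itself --- it cites \cite[Theorem 2.6]{xin2019sweep} --- but does mirror in its own Lemma \ref{lem-NFprop} for the modified algorithm $\eta_*$. So structurally you are on track. However, there is one concrete misstep in how you distribute the work between the hypotheses defining $\mathcal{F}_{\vec{k}}$.

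You claim that the forbidden-configuration condition is ``where the Dyck-path validity does its work,'' i.e.\ that it guarantees the starting-rank sequence of the read-off word never goes negative. That is not what it does, and the step would fail as written. The nonnegativity of ranks follows from the shape and monotonicity conditions alone, via a counting argument: if $t_1<t_2<\cdots<t_\ell$ are the first-row entries, then every label smaller than $t_{j+1}$ other than $t_1,\dots,t_j$ must sit below the top row in the first $j$ columns (rows increase left to right, columns increase downward), and those columns contain exactly $k_1+\cdots+k_j+j$ cells; hence $t_{j+1}\leq k_1+\cdots+k_j+j+1$, which is precisely the statement that the path never dips below the diagonal. This is the argument the paper uses in the ``if'' direction of Lemma \ref{lem-NFprop}. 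The forbidden-configuration condition plays a different role: it is what forces each $W$-label, when the labels are replayed in increasing order, to land under the \emph{smallest} active entry rather than some other active entry --- i.e.\ it is exactly the condition distinguishing tableaux produced by the smallest-active rule ($\mathcal{F}_{\vec{k}}$) from those produced by, say, the largest-active rule ($\mathcal{NF}_{\vec{k}}$), which satisfy the same shape and monotonicity constraints. Relatedly, your worry that the greedy algorithm might ``get stuck'' is unfounded and needs no hypothesis beyond path validity: before any $W$-step the current rank is at least $1$, and the number of free slots under active entries equals the current rank, so an active entry always exists. Once you reassign the two conditions to their correct jobs --- counting for Dyck validity, forbidden configuration for forcing the smallest-active placement --- your outline becomes the paper's argument.
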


Similarly, extending the idea of Garsia and Xin from \cite{garsia2020sweep}, the second author and Xin introduced the second algorithm, called \textbf{Ranking Algorithm $\gamma$}.

\begin{dfn}\cite[\textbf{Ranking Algorithm $\gamma$}]{xin2019sweep}\label{dfn-ranking}
Input: A filling tableau $F\in \mathcal{F}_{\vec{k}}$.
Output: A ranking tableau $\gamma(F)$ of the same shape with $F$.
\begin{enumerate}
\item Successively assign ranks $0,1,2,\cdots,k_1$ to the first column of $\gamma(F)$ from top to bottom;
\item For $i$ from 2 to $n$, if the top entry of the $i$-th column of $F$ is $A + 1$, and the rank of $A$ is $a$, then the ranks in the $i$-th column of $\gamma(F)$ are successively $a, a + 1, ..., a + k_i$ from top to bottom.
\end{enumerate}
\end{dfn}

\begin{dfn}
Given $\pi\in \mathcal{D}_{\vec{k}}$, define the \textbf{bounce sequence} $b(\pi):=(b_1,b_2,\cdots,b_{\ell(\vec{k})})$ as the entries in the first row of $\gamma(\eta(\pi))$ obtained by applying \textbf{Filling Algorithm $\eta$} and \textbf{Ranking Algorithm $\gamma$}. The \textbf{bounce} of $\pi$ is defined by
\begin{equation*}
\mathrm{bounce}(\pi):=b_1+b_2+\cdots+b_{\ell(\vec{k})}.
\end{equation*}
\end{dfn}

\begin{exa}\label{exa-bounce}
The following tableaux are obtained by applying the Filling Algorithm $\eta$ and Ranking Algorithm $\gamma$ to $\pi=S^4 W S^2 W W W W S^3 W W S^1 W W W$. We have $b(\pi)=(0,1,3,4)$ and $\mathrm{bounce}(\pi)=8$.

\begin{figure}[H]
\centering
\begin{ytableau}
1 & 3 & 8 & 11\\
2 & 5 & 10 & 13\\
4 & 7 & 12 & \none\\
6 & \none & 14 & \none\\
9
\end{ytableau}\qquad
\begin{ytableau}
0 & 1 & 3 & 4\\
1 & 2 & 4 & 5\\
2 & 3 & 5 & \none\\
3 & \none & 6 & \none\\
4
\end{ytableau}
\vspace{0.5cm}
\caption{Filling tableau $\eta(\pi)$ and Ranking tableau $\gamma(\eta(\pi))$.}
\end{figure}
\end{exa}

\begin{dfn}
\textbf{Sweep map} $\Phi$: Sorting the two-line array according to the starting rank sequence from smallest to largest. For arrows starting at the same rank, we list them from right to left.
\end{dfn}

\begin{exa}
In Figure 1, we have
\begin{equation*}
\left(\begin{matrix}
\pi\\r(\pi)
\end{matrix}\right)=
\left(\begin{matrix}
S^4 &W &S^2 & W &W &W &W &S^3 &W &W &S^1 &W &W &W\\
0 & 4 & 3 & 5 & 4 & 3 & 2 & 1 & 4 & 3 & 2 & 3 & 2 & 1
\end{matrix}\right)
\end{equation*}
Applying the sweep map $\Phi$ to the above array, we obtain
\begin{equation*}
\left(\begin{matrix}
S^4 & W & S^3 & W & S^1 & W & W & W & W& S^2 & W & W & W & W\\
0 & 1 & 1 & 2 & 2 & 2 & 3 & 3 & 3 & 3 & 4 & 4 & 4 & 5
\end{matrix}\right),
\end{equation*}
thus
$$\Phi(\pi)=(S^4 W S^3 W S^1 W W W W S^2 W W W W).$$
\end{exa}

From our example, it is clear that $\Phi$ is not a map from $\mathcal{D}_{\vec{k}}$ to $\mathcal{D}_{\vec{k}}$, as we have rearranged the order of all $S^*$ steps. However

\begin{thm}\cite[Theorem 1]{xin2023dinv}
Sweep map $\Phi$ is a bijection from $\mathcal{D}_{\mathcal{K}}$ to $\mathcal{D}_{\mathcal{K}}$ and takes dinv to area and area to bounce, that is $\mathrm{dinv}(\pi)=\mathrm{area}(\Phi(\pi))$ and $\mathrm{area}(\pi)=\mathrm{bounce}(\Phi(\pi))$.
\end{thm}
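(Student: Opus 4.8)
The plan is to prove the three assertions in turn: that $\Phi$ maps $\mathcal{D}_{\mathcal{K}}$ into itself bijectively, that $\mathrm{area}(\Phi(\pi))=\mathrm{dinv}(\pi)$, and that $\mathrm{bounce}(\Phi(\pi))=\mathrm{area}(\pi)$. The guiding principle is that $\Phi$ only permutes the $S^{*}$ steps while fixing the multiset of $W$ steps, so its image automatically uses a rearrangement of $\vec{k}$ and lands in $\mathcal{D}_{\mathcal{K}}$ as soon as we check it is a genuine lattice path; and that the Filling and Ranking Algorithms $\eta,\gamma$ are precisely the tools designed to invert the rank-sorting and to read off starting ranks, so I would route all three statements through them rather than manipulate paths directly.

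First I would settle well-definedness and bijectivity. For validity I would argue by a rank-balancing (prefix-sum) estimate: writing the new starting rank of the $m$-th step of $\Phi(\pi)$ as $\sum_{s}\delta(s)$ over the first $m$ steps in sorted order, with $\delta(S^{k})=k$ and $\delta(W)=-1$, I would show this prefix sum never drops below $0$, using that every starting rank of $\pi$ is nonnegative together with the right-to-left tie-breaking rule on equal ranks. For invertibility I would produce an explicit inverse: given $\sigma\in\mathcal{D}_{\mathcal{K}}$, apply $\eta$ to obtain the filling tableau and recover the unique preimage by reading its labels back in rank order. Here I may invoke the cited bijection $\eta\colon\mathcal{D}_{\vec{k}}\to\mathcal{F}_{\vec{k}}$ (Theorem 2.6) and need only check that the column-filling reverses the sorting, the tie-break matching the order in which equal-rank entries are stacked within a column.

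Next, for $\mathrm{area}(\Phi(\pi))=\mathrm{dinv}(\pi)$, I would compute the new starting rank of each red arrow $S_{j}$ of $\Phi(\pi)$ as the prefix sum of $\delta$ over the steps preceding it in sorted order, and split those steps into the $W$ steps and the other red arrows. The $W$-contribution, counting blue arrows swept past $S_{j}$, should reproduce the sweep-dinv sum $\sum_{W_{i}<S_{j}}\chi(0\le r(W_{i})-r(S_{j})\le k_{j})$, while the contribution from the other red arrows should reproduce the two red-dinv sums $\sum_{S_{i}<S_{j}}|\dot r(S_{i})-\dot r(S_{j})|\,\chi(\cdots)$ governing nesting. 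Summing over $j$ then yields $\mathrm{dinv}(\pi)$; since this is a direct bookkeeping of rank comparisons, I would present the term-by-term matching rather than grind the arithmetic.

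Finally, for $\mathrm{bounce}(\Phi(\pi))=\mathrm{area}(\pi)$, I would prove the equivalent statement that for every $\sigma$ the tableau $\gamma(\eta(\sigma))$ records the starting ranks of $\Phi^{-1}(\sigma)$, so that the first row of $\gamma(\eta(\sigma))$ — the ranks of the column tops, which are exactly the red arrows — is the area sequence of $\Phi^{-1}(\sigma)$; taking $\sigma=\Phi(\pi)$ gives $\mathrm{bounce}(\Phi(\pi))=\mathrm{area}(\pi)$. The hard part, and the real content of the theorem, is this last identification: one must show that the Filling Algorithm genuinely inverts the rank-sorting performed by $\Phi$, and that the Ranking Algorithm's rule — propagating $a,a+1,\dots,a+k_{i}$ down each column from the rank of the predecessor of the column's top entry — reproduces exactly the starting ranks of the preimage. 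Verifying this requires reconciling the ``smallest active entry'' placement rule and the admissibility condition defining $\mathcal{F}_{\vec{k}}$ with the right-to-left tie-breaking of the sweep, which is where the subtlety of sweep-map invertibility is concentrated; once it is in place, both statistic identities follow as above.
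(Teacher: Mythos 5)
This statement is imported verbatim from \cite{xin2023dinv}; the paper you are reading gives no proof of it, so there is no internal argument to compare against, and your proposal has to be measured against the proof in the cited source. Your outline reconstructs that source's strategy essentially correctly: the sweep map's image is a valid path by a prefix-sum argument; bijectivity is obtained by showing the Filling Algorithm $\eta$ inverts the sorting; and since $\mathrm{bounce}$ is \emph{defined} as the first-row sum of $\gamma(\eta(\cdot))$, the identity $\mathrm{area}(\pi)=\mathrm{bounce}(\Phi(\pi))$ reduces exactly to your key claim that $\gamma(\eta(\Phi(\pi)))$ records the starting ranks of the preimage $\pi$ — this identification is the heart of the sweep-map inversion in \cite{xin2019sweep} and \cite{garsia2020sweep}, and the $\mathrm{dinv}$-to-$\mathrm{area}$ identity is indeed proved in \cite{xin2023dinv} by the term-by-term matching of rank contributions you describe (the definition of red dinv with weights $|\dot r(S_i)-\dot r(S_j)|$ is engineered precisely so that this bookkeeping closes). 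One small correction: the right-to-left tie-breaking on equal ranks is not needed for well-definedness of the image — any ordering within a rank class yields a path that stays weakly above the axis, as one sees by matching up- and down-crossings of each half-level — it is needed only so that the Filling Algorithm can reconstruct a unique preimage, i.e.\ for injectivity. With that adjustment, your division of labor (easy validity, bijectivity and both statistics routed through $\eta$ and $\gamma$) is faithful to how the cited theorem is actually proved.
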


This establishes that the three statistics are jointly equidistributed on $\mathcal{K}$-Dyck paths.

\begin{cor}\cite[Theorem 1]{xin2023dinv}
\begin{equation}\label{equ-areabounce}
\sum_{\pi\in \mathcal{D}_{\mathcal{K}}}q^{\mathrm{dinv}(\pi)}t^{\mathrm{area}(\pi)}=\sum_{\pi\in \mathcal{D}_{\mathcal{K}}}q^{\mathrm{area}(\pi)}t^{\mathrm{bounce}(\pi)}.
\end{equation}
\end{cor}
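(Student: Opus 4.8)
The plan is to deduce this identity directly from the preceding theorem on the sweep map $\Phi$ by a single change-of-summation-variable argument; no new combinatorics is needed, only the two statistic-transport properties already established. Since the theorem gives us both a bijection and the precise effect of that bijection on the three statistics, the corollary is a formal consequence.

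First I would invoke the theorem: $\Phi$ is a bijection from $\mathcal{D}_{\mathcal{K}}$ to itself with $\mathrm{dinv}(\pi)=\mathrm{area}(\Phi(\pi))$ and $\mathrm{area}(\pi)=\mathrm{bounce}(\Phi(\pi))$ for every $\pi\in\mathcal{D}_{\mathcal{K}}$. Writing $\sigma:=\Phi(\pi)$, I would rewrite both exponents on the left-hand side in terms of $\sigma$, so that the $q$-exponent $\mathrm{dinv}(\pi)$ becomes $\mathrm{area}(\sigma)$ and the $t$-exponent $\mathrm{area}(\pi)$ becomes $\mathrm{bounce}(\sigma)$. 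Because $\Phi$ is a bijection of $\mathcal{D}_{\mathcal{K}}$, as $\pi$ ranges once over $\mathcal{D}_{\mathcal{K}}$ its image $\sigma$ ranges once over $\mathcal{D}_{\mathcal{K}}$ as well, so I may reindex the sum by $\sigma$. Concretely,
\begin{align*}
\sum_{\pi\in \mathcal{D}_{\mathcal{K}}}q^{\mathrm{dinv}(\pi)}t^{\mathrm{area}(\pi)}
&=\sum_{\pi\in \mathcal{D}_{\mathcal{K}}}q^{\mathrm{area}(\Phi(\pi))}t^{\mathrm{bounce}(\Phi(\pi))}\\
&=\sum_{\sigma\in \mathcal{D}_{\mathcal{K}}}q^{\mathrm{area}(\sigma)}t^{\mathrm{bounce}(\sigma)},
\end{align*}
which is exactly the right-hand side of \eqref{equ-areabounce}.

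The only point requiring care—and the reason the identity is stated over $\mathcal{D}_{\mathcal{K}}$ rather than over a single $\mathcal{D}_{\vec{k}}$—is the validity of the reindexing. As the worked example shows, $\Phi$ permutes the positions of the north steps and therefore need not send $\mathcal{D}_{\vec{k}}$ into itself; the bijectivity that legitimizes the change of variable holds only on the full set of rearrangements $\mathcal{D}_{\mathcal{K}}$. Once one is working on $\mathcal{D}_{\mathcal{K}}$, there is no genuine obstacle: the argument is a direct substitution, and the main thing to track is simply that the domain is the entire rearrangement class so that $\sigma=\Phi(\pi)$ sweeps out $\mathcal{D}_{\mathcal{K}}$ bijectively.
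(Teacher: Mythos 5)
Your proposal is correct and matches the paper's own reasoning: the paper presents this corollary as an immediate consequence of the preceding theorem, i.e.\ precisely the reindexing of the sum via the bijection $\Phi$ together with the transport properties $\mathrm{dinv}(\pi)=\mathrm{area}(\Phi(\pi))$ and $\mathrm{area}(\pi)=\mathrm{bounce}(\Phi(\pi))$. Your remark that the identity must be stated over $\mathcal{D}_{\mathcal{K}}$ rather than a single $\mathcal{D}_{\vec{k}}$ also mirrors the paper's observation that $\Phi$ rearranges the $S^*$ steps and hence is only a bijection on the full rearrangement class.
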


In particular, there are two different expressions for the classical $q,t$-Catalan polynomials $C_{n}(q,t)$ and the higher $q,t$-Catalan polynomials $C_{n}^{(k)}(q,t)$.

\begin{cor}\cite{garsia2002proof,loehr2005conjectured,mellit2021toric}
\begin{equation}
C_{n}(q,t)=\sum_{\pi\in \mathcal{D}_{n}}q^{\mathrm{dinv(\pi)}}t^{\mathrm{area}(\pi)}=\sum_{\pi\in \mathcal{D}_{n}}q^{\mathrm{area(\pi)}}t^{\mathrm{bounce}(\pi)}.
\end{equation}
\begin{equation}\label{equ-cnk}
C_{n}^{(k)}(q,t)=\sum_{\pi\in \mathcal{D}_{k^n}}q^{\mathrm{dinv(\pi)}}t^{\mathrm{area}(\pi)}=\sum_{\pi\in \mathcal{D}_{k^n}}q^{\mathrm{area(\pi)}}t^{\mathrm{bounce}(\pi)}.
\end{equation}
\end{cor}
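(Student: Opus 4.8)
The plan is to derive both displayed lines as specializations of the preceding equidistribution identity together with the known symmetric-function formulas for the $q,t$-Catalan and higher $q,t$-Catalan polynomials. First I would dispose of the inner equality in each line, which is immediate from \eqref{equ-areabounce}. When $\vec{k}=(1,1,\dots,1)$ the rearrangement class $\mathcal{K}$ is the singleton $\{(1,\dots,1)\}$, so $\mathcal{D}_{\mathcal{K}}=\mathcal{D}_{n}$; likewise when $\vec{k}=(k,k,\dots,k)$ we have $\mathcal{D}_{\mathcal{K}}=\mathcal{D}_{k^n}$. Substituting these singleton classes into \eqref{equ-areabounce} yields
$$\sum_{\pi\in\mathcal{D}_{n}}q^{\mathrm{dinv}(\pi)}t^{\mathrm{area}(\pi)}=\sum_{\pi\in\mathcal{D}_{n}}q^{\mathrm{area}(\pi)}t^{\mathrm{bounce}(\pi)}$$
and the analogous identity on $\mathcal{D}_{k^n}$, which is exactly the rightmost equality in each of the two displays.

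Second, the identification of these sums with the Macdonald-eigenoperator expressions is the genuinely external input. For $C_{n}(q,t)=\langle\nabla e_n,e_n\rangle$ I would cite the combinatorial formula for the $q,t$-Catalan polynomial established by Garsia and Haglund \cite{garsia2002proof}, giving $C_{n}(q,t)=\sum_{\pi\in\mathcal{D}_{n}}q^{\mathrm{dinv}(\pi)}t^{\mathrm{area}(\pi)}$. For $C_{n}^{(k)}(q,t)=\langle\nabla^{k}e_n,e_n\rangle$ I would invoke the conjecture of Loehr and Warrington \cite{loehr2005conjectured}, now a theorem as a consequence of Mellit's rational shuffle theorem \cite{mellit2021toric}, giving $C_{n}^{(k)}(q,t)=\sum_{\pi\in\mathcal{D}_{k^n}}q^{\mathrm{dinv}(\pi)}t^{\mathrm{area}(\pi)}$. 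Chaining these two formulas with the specialized equidistribution from the first step produces both displayed lines in full. Before doing so I would run one compatibility check: the statistics $\mathrm{dinv}$ and $\mathrm{area}$ defined in Section~2 for a general $\vec{k}$ must coincide, on the specializations $\vec{k}=(1^n)$ and $\vec{k}=(k^n)$, with the classical statistics used in the cited formulas. For $\mathrm{area}$ this is immediate from its geometric description. For $\mathrm{dinv}$ on $\mathcal{D}_{n}$ I would note that the red dinv vanishes—each unit north step has $\dot{r}(S_j)=r(S_j)+1$, so neither inequality pair defining the red-dinv summands can hold—leaving only the sweep dinv, i.e.\ the count of pairs $(W_i,S_j)$ with $0\le r(W_i)-r(S_j)\le 1$, which is the classical dinv; the matching on $\mathcal{D}_{k^n}$ reduces similarly to the standard $k$-Dyck statistics.

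The main obstacle here is not internal to the paper: it is the depth of the cited algebraic results, above all the rational shuffle theorem that underlies the higher case. Within the present framework the only real work is the statistic-compatibility bookkeeping just described, after which the corollary follows by a clean specialization of the sweep-map equidistribution proved earlier in this section.
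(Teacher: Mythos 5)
Your proposal is correct and takes essentially the same route as the paper: the paper likewise treats this corollary as the specialization of \eqref{equ-areabounce} to the singleton rearrangement classes (so that $\mathcal{D}_{\mathcal{K}}=\mathcal{D}_{n}$ or $\mathcal{D}_{k^n}$), combined with the cited external identifications \cite{garsia2002proof,loehr2005conjectured,mellit2021toric} of the dinv/area sums with $\left<\nabla e_n,e_n\right>$ and $\left<\nabla^{k} e_n,e_n\right>$. Your extra bookkeeping that the red dinv vanishes on $\mathcal{D}_{n}$ (since $\dot{r}(S_j)=r(S_j)+1$ for unit north steps) is a sensible compatibility check that the paper leaves implicit.
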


As a consequence of the Shuffle Theorem \cite{carlsson2018proof} and the Rational Shuffle Theorem \cite{mellit2021toric}, both polynomials exhibit $q,t$-symmetry.

\section{Refined $q,t$-Catalan polynomials of $\vec{k}$-Dyck paths}

A natural question arises: Are the polynomials defined in Equation (\ref{equ-areabounce}), which generalize both the classical $q,t$-Catalan polynomials and the higher $q,t$-Catalan polynomials, $q,t$-symmetric? If not, are there any pair of statistics that exhibit $q,t$-symmetry on $\mathcal{D}_{\vec{k}}$, $\mathcal{D}_{\mathcal{K}}$, or on certain partial unions of the set $\mathcal{D}_{\vec{k}}$?

We begin by outlining some established results on $q,t$-symmetric polynomials that are closely related to our work.

\subsection{Area-Bounce Polynomials}
\begin{dfn}
\begin{equation}
C_{\vec{k}}(q,t):=\sum_{\pi\in \mathcal{D}_{\vec{k}}}q^{\mathrm{area}(\pi)}t^{\mathrm{bounce}(\pi)}.
\end{equation}
\begin{equation}
C_{\mathcal{K}}(q,t):=\sum_{\pi\in \mathcal{D}_{\mathcal{K}}}q^{\mathrm{area}(\pi)}t^{\mathrm{bounce}(\pi)}.
\end{equation}
\end{dfn}

As mentioned in the introduction, there are certain special cases where $C_{\vec{k}}(q,t)$ and $C_{\mathcal{K}}(q,t)$ are $q,t$-symmetric, including cases such as $\ell(\vec{k})\leq 3$, $\vec{k}=(k,k,k,k)$, and others. However, the $q,t$-symmetry of $C_{\mathcal{K}}(q,t)$ no longer holds in general for $\ell(\lambda(\mathcal{K}))\geq 4$. The experimental results suggest that

\begin{cnj}\cite[Conjecture 11]{xin2023dinv}
$C_{\mathcal{K}}(q,t)$ is $q,t$-symmetric if $\lambda(\mathcal{K})=((a+1)^s,a^{m})$ for any positive $a$, $s$, and $m$.
\end{cnj}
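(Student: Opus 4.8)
The plan is to derive this conjecture from the $q,t$-symmetry of the depth polynomial already established in Theorem \ref{thm-K}, by showing that for the two-value shapes $\lambda(\mathcal{K})=((a+1)^s,a^m)$ the bounce and depth statistics are equidistributed with area, that is, $C_{\mathcal{K}}(q,t)=\widetilde{C}_{\mathcal{K}}(q,t)$. This route is internally consistent with the known failure of symmetry: since $C_{\mathcal{K}}$ is not symmetric once $\ell(\lambda(\mathcal{K}))\ge 4$, whereas $\widetilde{C}_{\mathcal{K}}$ is symmetric for \emph{every} $\vec{k}$, the identity $C_{\mathcal{K}}=\widetilde{C}_{\mathcal{K}}$ can only persist for special shapes, and the defining feature of $((a+1)^s,a^m)$ to exploit is simply that all parts lie in $\{a,a+1\}$. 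So the one structural fact I need is that depth and bounce agree exactly when consecutive part-values never differ by more than one.

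The heart of the argument is therefore the lemma: if every $k_i\in\{a,a+1\}$, then $\mathrm{bounce}(\pi)=\mathrm{depth}(\pi)$ for all $\pi\in\mathcal{D}_{\mathcal{K}}$. Because depth is, as noted in the abstract, a slight modification of bounce, I would first isolate the precise step at which the depth construction of Section 3 departs from the filling/ranking computation of $b(\pi)$. I expect this departure to be governed by the rank increments $a,a+1,\dots,a+k_i$ assigned column-by-column by the Ranking Algorithm $\gamma$: when adjacent columns correspond to parts differing by at most one, the admissibility condition of $\mathcal{F}_{\vec{k}}$ (for $a<b<c<d$ with $d$ immediately below $a$, the labels $b,c$ cannot share a column) forces the bounce ranks and the depth values to coincide term by term. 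I would verify this by induction on the columns of $\gamma(\eta(\pi))$, establishing that the modification distinguishing depth from bounce is inert precisely because no column can skip a rank by more than one when $k_i-k_{i-1}\in\{-1,0,1\}$. Granting this lemma, $C_{\mathcal{K}}(q,t)=\widetilde{C}_{\mathcal{K}}(q,t)$ over the whole union $\mathcal{D}_{\mathcal{K}}$, and Theorem \ref{thm-K} yields the $q,t$-symmetry at once.

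The main obstacle will be the lemma itself, and specifically pinning down the exact combinatorial trigger that separates depth from bounce and proving it never fires for two-value shapes; this is a delicate analysis of the interaction between the Filling and Ranking Algorithms and the depth modification, and it must hold uniformly over all rearrangements in $\mathcal{K}$, including the boundary rectangular cases $m=0$ or $s=0$. Should the pointwise identity prove too rigid, so that bounce and depth agree only in distribution rather than path by path, I would fall back to the generating-function method of \cite{xin2025q}: for $\lambda(\mathcal{K})=((a+1)^s,a^m)$ the explicit bounce formula collapses to a form amenable to MacMahon's Partition Analysis, and one can compute $C_{\mathcal{K}}(q,t)$ directly and check invariance under $q\leftrightarrow t$, using the two-value structure to keep the number of summation chambers bounded independently of $s$ and $m$.
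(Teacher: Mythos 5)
First, a point of framing: the statement you were asked to prove is not proven in this paper at all. It is Conjecture 11 of \cite{xin2023dinv}, quoted here as an open problem; the authors' own theorems (Theorems \ref{thm-aK} and \ref{thm-K}) concern the depth polynomial $\widetilde{C}_{\mathcal{K}}(q,t)$, not the bounce polynomial $C_{\mathcal{K}}(q,t)$. So there is no proof in the paper to compare against, and your proposal must stand on its own. It does not, because its key lemma is false.

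Your lemma asserts that $\mathrm{bounce}(\pi)=\mathrm{depth}(\pi)$ (or at least that the two are equidistributed jointly with area) whenever all parts of $\vec{k}$ lie in $\{a,a+1\}$. Take $\vec{k}=(1,1,1,1)$, whose parts all lie in $\{1,2\}$, and $\pi=S^1S^1WS^1WWS^1W$. Running the two algorithms: $\eta(\pi)$ has rows $(1,2,4,7)$ and $(3,5,6,8)$, so $\gamma(\eta(\pi))$ has first row $(0,0,1,2)$ and $\mathrm{bounce}(\pi)=3$; while $\eta_*(\pi)$ has rows $(1,2,4,7)$ and $(6,3,5,8)$, so $\gamma_*(\eta_*(\pi))$ has first row $(0,0,1,1)$ and $\mathrm{depth}(\pi)=2$. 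The distributional fallback fails too: summing over all of $\mathcal{D}_{(1,1,1,1)}$, the polynomial $C_4(q,t)$ contains $q^3t^2+q^2t^3+q^3t^3+q^2t^2$, whereas $\widetilde{C}_{(1,1,1,1)}(q,t)$ contains $2q^3t^3+2q^2t^2$ and has no $q^3t^2$ or $q^2t^3$ term; both are $q,t$-symmetric, but they are not equal. This failure moreover occurs for genuine instances of the conjectured shapes: by Propositions \ref{thm-bk1k2} and \ref{prop-k1k2}, $C_{(1,1,1,2)}=C_{(1,1,1,1)}$ and $\widetilde{C}_{(1,1,1,2)}=\widetilde{C}_{(1,1,1,1)}$, so $C_{\vec{k}}\neq\widetilde{C}_{\vec{k}}$ for $\vec{k}=(1,1,1,2)$, whose shape is $((a+1)^s,a^m)=(2,1,1,1)$ with $a=s=1$, $m=3$; summing over all rearrangements in $\mathcal{K}$ provides no argued mechanism to restore the identity. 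The paper's conclusion says the same thing implicitly: bounce and depth coincide pointwise only in the very restrictive cases $\ell(\vec{k})\le 2$ or $\vec{k}=(a,1,c)$, and the bounce--depth polynomial $\widehat{C}_{\vec{k}}(q,t)$ is reported to be non-symmetric for $\ell(\vec{k})\ge 6$, which would be impossible under your lemma. Finally, your fallback via MacMahon's Partition Analysis is exactly the method of \cite{niu,chen,xin2025q}, which is why the conjecture is currently known only for $\ell(\lambda(\mathcal{K}))\le 3$ and $\lambda(\mathcal{K})=((a+1)^s,a^{4-s})$; nothing in your sketch controls the growth of the chamber decomposition as $s+m$ increases, so the conjecture remains open on your route as well.
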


In \cite{beck2024polyhedral}, the authors discussed the $q,t$-symmetry of $C_{\vec{k}}(q,t)$ and $C_{\mathcal{K}}(q,t)$ for some of the special cases mentioned above, approaching the problem from the perspective of polyhedral geometry. They also proposed a conjecture for a broad family of $\vec{k}$-Dyck paths.

\begin{cnj}\cite[Conjecture 5.1]{beck2024polyhedral}\label{cnj-beck}
$C_{\vec{k}}(q,t)$ is $q,t$-symmetric if $\vec{k}=(a,b,b,\cdots,b)$ for any positive integers $a$ and $b$.
\end{cnj}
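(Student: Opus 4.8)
The plan is to reduce the conjecture to Theorem~\ref{thm-aK} by recognizing $(a,b,b,\dots,b)$ as a single rearrangement class and then identifying the area-bounce polynomial with its area-depth counterpart. Since $(b,b,\dots,b)$ is a constant composition, its set of rearrangements is the singleton $\mathcal{K}_0=\{(b,\dots,b)\}$; prepending $a$ yields $^a\mathcal{K}_0=\{(a,b,\dots,b)\}$, again a singleton, so $\widetilde{C}_{^a\mathcal{K}_0}(q,t)=\widetilde{C}_{(a,b,\dots,b)}(q,t)$. By Theorem~\ref{thm-aK} this polynomial is $q,t$-symmetric for every $a$ and every length. Consequently the conjecture follows at once from the key identity
\[
C_{(a,b,\dots,b)}(q,t)=\widetilde{C}_{(a,b,\dots,b)}(q,t),
\]
that is, from the equidistribution of (area, bounce) and (area, depth) on $\mathcal{D}_{(a,b,\dots,b)}$.

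To establish this identity I would compare the two statistics directly on a homogeneous tail. Bounce is read from the first row of $\gamma(\eta(\pi))$, whereas depth arises from the dual algorithm on $\mathcal{LBT}_{(a,b,\dots,b)}$; because depth is only a slight modification of bounce, I expect the modification to be inert once all north steps past the first share the common length $b$. Concretely, I would follow how each admissible area sequence $(a_1,\dots,a_\ell)$ with $a_1=0$ propagates through the Filling and Ranking Algorithms, argue that the uniform tail keeps the branch structure regular, and deduce that depth and bounce agree on every path---or, more weakly, that they are swapped by an area-preserving bijection. Either conclusion upgrades the symmetry of $\widetilde{C}$ to that of $C$.

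The main obstacle is exactly this comparison: depth and bounce live on different combinatorial models (ranking tableaux versus labeled branch trees), so matching them demands a careful dictionary between the two algorithms specialized to $(a,b,\dots,b)$, and it is quite possible that they coincide only in distribution rather than pathwise. Should the pathwise route resist, I would fall back on the generating-function method of \cite{xin2025q}: derive a closed formula for $\mathrm{bounce}(\pi)$ in terms of the area sequence on $\mathcal{D}_{(a,b,\dots,b)}$, sum $q^{\mathrm{area}(\pi)}t^{\mathrm{bounce}(\pi)}$ over the lattice of admissible area sequences, and apply MacMahon's Partition Analysis to eliminate the auxiliary variables. The constant tail should make this elimination uniform in the length, producing a manifestly $q\leftrightarrow t$ symmetric rational expression and hence the symmetry of $C_{(a,b,\dots,b)}(q,t)$ directly.
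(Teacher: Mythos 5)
This statement is a conjecture --- both here and in its source \cite{beck2024polyhedral} --- and the paper offers no proof of it; your proposal does not close it either, because the identity on which your entire reduction rests is false. You claim $C_{(a,b,\dots,b)}(q,t)=\widetilde{C}_{(a,b,\dots,b)}(q,t)$, i.e.\ that (area, bounce) and (area, depth) are jointly equidistributed on $\mathcal{D}_{(a,b,\dots,b)}$. Already for $\vec{k}=(1,1,1,1)$ (the case $a=b=1$) this fails: for the path $\pi=S^1S^1WS^1WS^1WW$ one computes $\mathrm{area}(\pi)=3$ and $\mathrm{bounce}(\pi)=2$, but $\mathrm{depth}(\pi)=3$; and summing over all $14$ paths in $\mathcal{D}_{(1,1,1,1)}$ gives
\begin{equation*}
C_{(1,1,1,1)}(q,t)-\widetilde{C}_{(1,1,1,1)}(q,t)=\left(q^{3}t^{2}+q^{2}t^{3}\right)-\left(q^{3}t^{3}+q^{2}t^{2}\right)\neq 0,
\end{equation*}
so not even equidistribution survives: the two polynomials are genuinely different (both happen to be $q,t$-symmetric here, which is why the discrepancy is easy to miss). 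This is the same phenomenon observed in \cite{pappe2022area}, where the area--depth polynomial differs from $C_n(q,t)$ for $n\geq 4$. The paper itself stresses the distinction: Examples \ref{exa-bounce} and \ref{exa-depth} show bounce and depth are distinct statistics, equality of depth and bounce is established only in the very special cases $\ell(\vec{k})\leq 2$ (Proposition \ref{prop-k12}) and $\vec{k}=(a,1,c)$ (Proposition \ref{prop-a1c}), and the conclusion explicitly warns that the involution $\omega$ ``cannot be used to prove a similar result for $C_{\mathcal{K}}(q,t)$, as it is not generally $q,t$-symmetric.'' The gap is not repairable by a more careful dictionary between ranking tableaux and labeled branch trees, since the polynomials themselves disagree.

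Your fallback via MacMahon's Partition Analysis is not a proof but a restatement of the open problem. That is precisely the method of \cite{xin2025q} (and of \cite{niu,chen}), which has so far been carried out only for $\ell(\vec{k})\leq 3$ and isolated length-$4$ and length-$5$ cases; the obstruction is exactly the one you would face, namely producing a bounce formula and an $\Omega_{\geq}$ elimination that are uniform in the number of parts $b$. Since Theorem \ref{thm-aK} concerns $\widetilde{C}$ and says nothing about $C$, the conjecture remains open after your argument.
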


They proved that $C_{\vec{k}}(q,t)$ is not influenced by the last part of $\vec{k}$, that is

\begin{prop}\cite[Corollary 6.2]{beck2024polyhedral}\label{thm-bk1k2}
For any two vectors $\vec{k}_1=(a_1,a_2,\cdots,a_{\ell},b)$ and $\vec{k}_2=(a_1,a_2,\cdots,a_{\ell},c)$ with any positive integers $a_1, \cdots,a_{\ell},b$, and $c$,  $C_{\vec{k}_1}(q,t)=C_{\vec{k}_2}(q,t)$.
\end{prop}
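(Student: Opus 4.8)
The plan is to reduce both $\mathcal{D}_{\vec{k}_1}$ and $\mathcal{D}_{\vec{k}_2}$ to one common indexing set via a decomposition that is blind to the final part, and then to show that both $\mathrm{area}$ and $\mathrm{bounce}$ are functions of this common piece alone. Working in the visual path model, I first note that the last red arrow (of length $b$ for $\vec{k}_1$, or $c$ for $\vec{k}_2$) is the rightmost up-step, so every arrow to its right is a blue down-step $W$. Writing $\vec{k}_1=(a_1,\dots,a_\ell,b)$, let $\pi'$ denote the initial segment consisting of the red arrows $S^{a_1},\dots,S^{a_\ell}$ together with all $W$'s that precede the last red arrow, and let $h$ be the ending height of $\pi'$, which is the starting rank of the last red arrow. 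Since the path must descend from height $h+b$ back to $0$, it uses exactly $h+b$ final down-steps, so $\pi=\pi'\,S^{b}\,W^{h+b}$; conversely every nonnegative prefix $\pi'$ built from $S^{a_1},\dots,S^{a_\ell}$ and ending at some height $h$ yields a valid path this way. This gives a bijection between $\mathcal{D}_{\vec{k}_1}$ (resp.\ $\mathcal{D}_{\vec{k}_2}$) and the same set $P$ of prefixes, which depends only on $a_1,\dots,a_\ell$ and not on the last part.

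Second, I would check that $\mathrm{area}$ is constant on the fibers of $\pi\mapsto\pi'$. The starting ranks of the first $\ell$ red arrows are heights attained inside $\pi'$, while the starting rank of the last red arrow is exactly the ending height $h$ of $\pi'$. Hence $\mathrm{area}(\pi)$ is a function of $\pi'$ alone, with no dependence on $b$ or $c$.

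The third and decisive step is the same statement for $\mathrm{bounce}$, and this is where the real work lies. Here I would trace the Filling Algorithm $\eta$ and the Ranking Algorithm $\gamma$ on $\pi=\pi'\,S^{b}\,W^{h+b}$. Reading $\pi'$ first fills columns $1,\dots,\ell$ and fixes their top entries; the subsequent $S^{b}$ merely opens column $\ell+1$ with top entry $N'+1$, where $N'$ is the length of $\pi'$; and the trailing $W^{h+b}$ only appends cells \emph{below} already-existing columns. The crux is that appending cells below a column alters neither its top entry nor the depth-from-the-top of any previously placed cell, while the Ranking Algorithm assigns to the top of column $i$ (for $i\ge 2$) the rank of the cell labeled $t_i-1$, where $t_i$ is the top entry of column $i$ --- a quantity read off only from top entries and depths. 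I would argue by induction along this rank-recursion that the top ranks of columns $1,\dots,\ell$, and in fact the rank of every cell placed while reading $\pi'$, are determined by $\pi'$ alone, the recursion never escaping into column $\ell+1$ because each relevant predecessor label is at most $N'-1$. In particular the top rank of column $\ell+1$ equals the rank of the cell labeled $N'$, which sits in one of the first $\ell$ columns and is therefore $\pi'$-determined. Summing the first-row ranks then shows $\mathrm{bounce}(\pi)$ depends only on $\pi'$.

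Combining the three steps, both $\mathrm{area}$ and $\mathrm{bounce}$ descend to well-defined functions on the common prefix set $P$, so $C_{\vec{k}_1}(q,t)=\sum_{\pi'\in P}q^{\mathrm{area}}t^{\mathrm{bounce}}=C_{\vec{k}_2}(q,t)$. I expect the main obstacle to be precisely the bounce step: one must verify with care that the down-steps appended after the last red arrow, which genuinely add new cells to the earlier columns, nonetheless leave untouched all the data --- top entries and depths-from-top --- that the Ranking Algorithm actually reads, so that no rank entering the bounce sum is disturbed by the choice of final part. The area invariance and the decomposition itself are routine by comparison.
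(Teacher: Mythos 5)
Your proposal is correct. Note, however, that the paper does not actually prove this statement: it is quoted from \cite[Corollary 6.2]{beck2024polyhedral}, and the only proof-level remark in the paper is the sketch given for the depth analogue (Proposition \ref{prop-k1k2}), namely ``there is a bijection between $\mathcal{D}_{\vec{k}_1}$ and $\mathcal{D}_{\vec{k}_2}$, and the statistic is not affected by the last part of $\vec{k}$, which can be directly derived from the definitions of the Filling and Ranking algorithms.'' Your argument is exactly that strategy carried out in full: the decomposition $\pi=\pi'\,S^{b}\,W^{h+b}$ gives the common indexing set, area invariance is immediate, and your bounce analysis supplies the detail the paper delegates to the citation. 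The two points that make your bounce step go through are correctly identified: first, while reading $\pi'$ the filling process is identical for $\vec{k}_1$ and $\vec{k}_2$ (column $\ell+1$ is still empty, so the differing column heights $b+1$ versus $c+1$ never influence which entries are active); second, for every column $i\le \ell+1$ the label $t_i-1$ is at most $N'$, hence was placed during $\pi'$ in one of the first $\ell$ columns, so the rank recursion $b_i=b_j+m-1$ only ever reads positions fixed by $\pi'$, and the trailing $W$'s, which do add cells to earlier columns, change neither top entries nor row indices of already-placed cells. This is a complete and correct proof of the cited result, slightly more explicit than anything appearing in the paper itself.
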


Hence Conjecture \ref{cnj-beck} can be also stated as
\begin{cnj}\cite[Conjecture 5.1]{beck2024polyhedral}
$C_{\vec{k}}(q,t)$ is $q,t$-symmetric if $\vec{k}=(a,b,b,\cdots,b,c)$ for any positive integers $a$, $b$, and $c$.
\end{cnj}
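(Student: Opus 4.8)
The plan is to deduce the conjecture from Theorem~\ref{thm-aK} together with last-part invariance (Proposition~\ref{thm-bk1k2}). By Proposition~\ref{thm-bk1k2} the value of $C_{\vec{k}}$ is unchanged when the final entry $c$ is replaced by $b$, so $C_{(a,b,\dots,b,c)}(q,t)=C_{(a,b,\dots,b)}(q,t)$ and it suffices to treat the constant-tail vector $\vec{k}=(a,b,\dots,b)$. The crucial observation is that this vector is already covered by Theorem~\ref{thm-aK} on the \emph{depth} side: taking $\mathcal{K}=\{(b,\dots,b)\}$, which is a legitimate (singleton) rearrangement class, we get $^a\mathcal{K}=\{(a,b,\dots,b)\}$, again a singleton, whence $\widetilde{C}_{(a,b,\dots,b)}(q,t)=\widetilde{C}_{^a\mathcal{K}}(q,t)$ is $q,t$-symmetric by Theorem~\ref{thm-aK}. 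The entire conjecture is therefore reduced to the single polynomial identity
\begin{equation*}
C_{(a,b,\dots,b)}(q,t)=\widetilde{C}_{(a,b,\dots,b)}(q,t),
\end{equation*}
that is, to showing that $(\mathrm{area},\mathrm{bounce})$ and $(\mathrm{area},\mathrm{depth})$ have the same joint distribution on $\mathcal{D}_{(a,b,\dots,b)}$.

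To establish this identity I would first attempt to show that on this near-constant family \emph{depth coincides with bounce path by path}, so that the ``slight modification'' separating the two statistics degenerates. The filling tableau $\eta(\pi)$ of a path in $\mathcal{D}_{(a,b,\dots,b)}$ has a rigid shape --- a single first column of height $a+1$ followed by columns of common height $b+1$ --- so that the Ranking Algorithm $\gamma$ propagates ranks uniformly and produces a transparent closed form for the first-row entries $b(\pi)$, hence for $\mathrm{bounce}(\pi)$. I would then read off $\mathrm{depth}(\pi)$ from the dual algorithm on the labeled branch tree attached to $\pi$ and match it against $b(\pi)$, exploiting that all north steps after the first share the common length $b$, so the tree is regular away from its root. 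The base case $a=b$ is the constant-vector (Fuss) case, where both polynomials recover the higher $q,t$-Catalan polynomial $C_n^{(k)}(q,t)$ and the identity is already available; the remaining bookkeeping only has to track the effect of the single exceptional first step of length $a$.

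The main obstacle is exactly this last comparison. Since depth is defined through a genuinely different combinatorial model (the dual algorithm on $\mathcal{LBT}_{\mathcal{K}}$), its pathwise equality with bounce is not formal and must be extracted from the special structure of $(a,b,\dots,b)$; any valid argument is forced to use the constant tail, because a vector such as $(1,1,3,1)$, whose tail is not constant, is not of the form $^a\mathcal{K}$ for a singleton $\mathcal{K}$ and indeed has $\widetilde{C}_{(1,1,3,1)}(q,t)$ non-symmetric. If pathwise equality turns out to be too rigid, the fallback is to prove only the weaker polynomial identity $C=\widetilde{C}$ via an area-preserving bijection between the filling/ranking model and the branch-tree model, or, bypassing depth altogether, to write $C_{(a,b,\dots,b)}(q,t)$ as an explicit bounce generating function and make its $q,t$-symmetry manifest through MacMahon's Partition Analysis, as in \cite{xin2025q}; the rigidity of the tableau shape should keep the resulting Elliott reduction tractable and, I expect, symmetric after cancellation.
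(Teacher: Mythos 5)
This statement is an open conjecture quoted from \cite{beck2024polyhedral}; the paper does not prove it. The only thing the paper does here is record the equivalence with the constant-tail form $\vec{k}=(a,b,\dots,b)$ via last-part invariance (Proposition~\ref{thm-bk1k2}), and that first reduction step of yours is sound and is exactly the paper's own remark. Everything after that, however, rests on a bridge that is false. You reduce the conjecture to the identity $C_{(a,b,\dots,b)}(q,t)=\widetilde{C}_{(a,b,\dots,b)}(q,t)$, first in the strong pathwise form $\mathrm{depth}=\mathrm{bounce}$ and then as a distributional fallback. Both fail already in the smallest constant case $\vec{k}=(1,1,1,1)$, which lies in your family with $a=b=1$. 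Take $\pi=S^{1}S^{1}WS^{1}WS^{1}WW\in\mathcal{D}_{(1,1,1,1)}$: one computes $a(\pi)=(0,1,1,1)$, $b(\pi)=(0,0,1,1)$, $d(\pi)=(0,0,1,2)$, so $\mathrm{area}(\pi)=3$, $\mathrm{bounce}(\pi)=2$, $\mathrm{depth}(\pi)=3$. Summing over all $14$ paths gives
\begin{equation*}
\widetilde{C}_{(1,1,1,1)}(q,t)-C_{(1,1,1,1)}(q,t)=q^{3}t^{3}+q^{2}t^{2}-q^{3}t^{2}-q^{2}t^{3}\neq 0,
\end{equation*}
so even the weaker polynomial identity is false (both polynomials are individually $q,t$-symmetric here, which is why the discrepancy does not disprove the conjecture, but it does destroy your route to it). Your ``base case'' claim is wrong for the same reason: in the Fuss case the higher $q,t$-Catalan polynomial $C_n^{(k)}(q,t)$ is the $(\mathrm{area},\mathrm{bounce})$ polynomial, and the $(\mathrm{area},\mathrm{depth})$ polynomial is a genuinely different symmetric polynomial --- this is precisely why the paper's alternative description of $C_n^{(k)}(q,t)$ pairs $\mathrm{depth}(\pi)$ with $\mathrm{dinv}(\omega(\pi))$ rather than with area, and why the conclusion section states explicitly that the involution $\omega$ cannot be used to prove symmetry of $C_{\mathcal{K}}(q,t)$.

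The paper establishes $\mathrm{depth}(\pi)=\mathrm{bounce}(\pi)$ only in the very rigid cases $\ell(\vec{k})\leq 2$ and $\vec{k}=(a,1,c)$ (Propositions~\ref{prop-k12} and~\ref{prop-a1c}), which is far short of the conjecture's family; your intuition that the ``slight modification'' degenerates on near-constant vectors is refuted by the example above, because even with all column heights equal the filling algorithms $\eta$ and $\eta_{*}$ diverge as soon as two columns are simultaneously active. Your final fallback --- writing the bounce generating function and applying MacMahon's Partition Analysis --- is not carried out and is essentially the already-known open line of attack: it succeeds in the literature only for $\ell(\vec{k})\leq 3$ and for $(k,k,k,k)$ and $(k,k,k,k,k)$, and the number of $\Omega$-variables grows with $\ell(\vec{k})$, so it does not yield the conjecture for arbitrary tail length. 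In short: apart from the correct opening reduction, the proposal does not prove the statement, and the statement remains open in the paper as well.
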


\subsection{Area-Depth polynomials}

The content of this section is the main focus of our study. First, let us review the depth statistic for classical Dyck paths.

\begin{dfn}\cite[\textbf{Depth labeling}]{pappe2022area}
Given $\pi\in \mathcal{D}_{n}$, label $\pi$ column by column using the following algorithm:
\begin{enumerate}
    \item In the first column, label all cells directly to the right of a $N$ step with a $0$;
    \item In the $i$-th column from the left, locate the bottommost cell $c$ in the column that is directly right of a North step; note that such a cell may not exist. From $c$ travel Southwest diagonally
    until a cell $c'$ that is already labeled is reached. Let $\ell$ be the labeling of $c'$. Label all cells directly to the right of a North step in the $i$-th column with an $\ell+1$.
\end{enumerate}
The \textbf{depth labeling sequence} $d(\pi):=(d_1,d_2,\cdots,d_n)$ is the sequence of labels read from bottom to top and define the \textbf{depth} of $\pi$ by $\mathrm{depth}(\pi):=d_1+d_2+\cdots+d_n$.
\end{dfn}

\begin{exa}
In Figure \ref{exa-Dyckdepth}, we have $d(\pi)=(0,0,1,1,1,2,2,2)$ and $\mathrm{depth}(\pi)=9$.
\begin{figure}[H]
    \centering
    \includegraphics[width=0.35\linewidth]{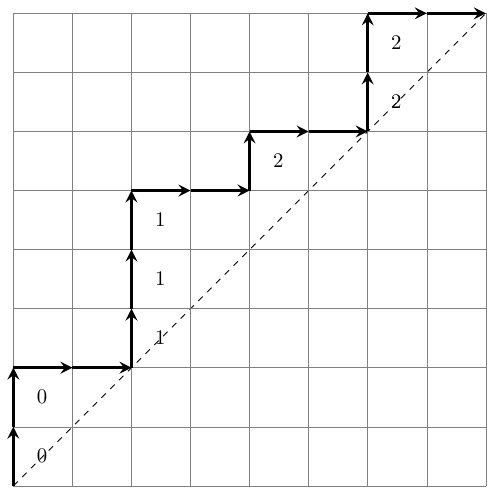}
    \caption{Depth labeling of $\pi=NNEENNNEENEENNEE$.}
    \label{exa-Dyckdepth}
\end{figure}
\end{exa}

Before defining the depth for $\vec{k}$-Dyck paths, we would like to take a moment to explain our motivation. We observed that if we modified the \textbf{smallest active} to \textbf{largest active} in the Filling algorithm $\eta$, the first row of the ranking tableau will correspond to the \textbf{depth labeling sequence}. It should be noted that \textbf{depth labeling sequence} differs from the \textbf{depth sequence} defined in \cite{pappe2022area}; the latter is simply a rearrangement of the former.

The filling tableaux obtained via the modified filling algorithm $\eta_{*}$ are denoted by $\mathcal{NF}_{\vec{k}}$. $\mathcal{NF}_{\mathcal{K}}$ is the union of all $\mathcal{NF}_{\vec{k}}$. A characterization of these tableaux will be provided later.

\begin{dfn}[\textbf{Filling Algorithm $\eta_{*}$}]
\label{dfn-Filling Algorithm-Max}
\noindent
Input: The SW-word of a $\vec{k}$-Dyck path $\pi\in \mathcal{D}_{\vec{k}}$.
\noindent
Output: A filling tableau $\eta_{*}(\pi)\in \mathcal{NF}_{\vec{k}}$.

\begin{enumerate}
\item   Start by placing a $1$ in the top row and the first column.
\item  If the second letter in $\pi$ is an $S^*$, place a $2$ at the top of the second column.
\item   If the second letter in $\pi$ is a $W$, place $2$ below the $1$.
\item  At any stage, the entry at the bottom of the $i$-th column but not in row $k_i+1$ will be called \textbf{active}.
\item  Having placed $1,2,\cdots i-1$, place $i$ immediately below the \textbf{largest  active} entry if the $i$-th letter in $\pi$ is a $W$; otherwise, place $i$ at the top of the first empty column.
\item   Repeat this process recursively until $1,2,\ldots ,|\vec{k}|+l(\vec{k})$ have  all  been placed.
\end{enumerate}
\end{dfn}

We simply refer to the ranking algorithm as $\gamma_{*}$ when applying the same procedure $\gamma$ described in Definition \ref{dfn-ranking} to tableaux in $\mathcal{NF}_{\vec{k}}$.

\begin{exa}
In the context of $\vec{k}$-Dyck paths, the path $\pi$ in Example \ref{exa-Dyckdepth} can be expressed as
$$\pi=S^1 S^1 W W S^1 S^1 S^1 W W S^1 W W S^1 S^1 W W.$$
The first row in its ranking tableau is exactly depth labeling sequence of $\pi$.
\begin{figure}[H]
\centering
\begin{ytableau}
1 & 2 & 5 & 6 & 7 & 10 & 13 & 14\\
4 & 3 & 12 & 9 & 8 & 11 & 16 & 15
\end{ytableau}\qquad
\begin{ytableau}
0 & 0 & 1 & 1 & 1 & 2 & 2 & 2\\
1 & 1 & 2 & 2 & 2 & 3 & 3 & 3
\end{ytableau}
\vspace{0.5cm}
\caption{Filling tableau $\eta_{*}(\pi)$ and Ranking tableau $\gamma_{*}(\eta_{*}(\pi))$.}
\end{figure}
\end{exa}

\begin{lem}\label{lem-NFprop}
Let $F$ be a tableau with labels $1,2,\dots,|\vec{k}|+\ell(\vec{k})$, such that the $i$-th column contains $k_i+1$ entries for $1 \leq i \leq \ell(\vec{k})$, the entries in the first row are increasing from left to right, and each column is increasing from top to bottom. Then $F = \eta_*(\pi)$ for some $\pi \in \mathcal{D}_{\vec{k}}$ if and only if, for any $a<d-1$, where $d$ is immediately below $a$, the columns of $F$ containing $a+1,a+2,\dots, d-1$ are filled. 
\end{lem}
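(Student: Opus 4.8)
The plan is to prove the two implications separately, in both cases by tracking the set of \textbf{active} entries as $\eta_*$ runs. For the forward direction, assume $F=\eta_*(\pi)$ and suppose $d$ lies immediately below $a$ in $F$ with $a<d-1$. Since $d$ is placed by a $W$-letter below the \emph{largest active} entry, that entry must be $a$. The crux is the claim that, at the instant $d$ is placed, every label in $\{a+1,\dots,d-1\}$ already lies in a completely filled column, and that these full columns contain no label outside this range. First I would record the elementary dynamics of $\eta_*$: an $S$-letter opens a new column whose unique (hence largest) new entry becomes the current maximum active entry, while a $W$-letter buries the current largest active entry $x$ and replaces it by the newly placed entry, or by nothing if the column becomes full. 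From this it follows that immediately after $a$ is placed it is the largest active entry; since $d\neq a+1$, the next letter cannot bury $a$, so $a+1$ must open a new column.

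I would then argue that, because $a$ must again be the largest active entry when $d$ is placed, every column opened during the window $a+1,\dots,d-1$ is filled to its bottom using only labels from that window: any unfinished such column would leave an active entry larger than $a$, contradicting maximality of $a$ at step $d$. Conversely, no column whose current bottom is $\le a$ can be extended while $a$ or a larger new entry is active, so the labels $a+1,\dots,d-1$ neither enter old columns nor spill over into labels $\ge d$. A count of entries then shows $\{a+1,\dots,d-1\}$ is exactly a union of complete columns, which is the asserted condition.

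For the converse, given $F$ satisfying the stated monotonicity together with the gap condition, I would form $\pi$ by the reading map that turns each column top into an $S$-letter (of the appropriate length) and every other cell into a $W$-letter, and then show $\eta_*(\pi)=F$ by induction on the step $m$. The $S$-letters cause no difficulty: since the first row increases from left to right, the $j$-th smallest top sits in column $j$, so placing successive $S$-labels at the top of the first empty column reproduces the first row of $F$. The essential point is each $W$-step: when $m$ is placed and $a$ denotes the cell directly above $m$ in $F$, I must show $a$ is the largest active entry at that moment. The gap condition applied to the pair $(a,m)$ guarantees that all of $a+1,\dots,m-1$ fill complete columns, hence are inactive at step $m$, while $a$ is still active because the cell $m$ below it has not yet been placed. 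Therefore $\eta_*$ places $m$ directly below $a$, matching $F$; the induction closes the argument, and the same bookkeeping confirms the word $\pi$ has $\ell(\vec k)$ south ends and $|\vec k|$ west ends and stays above the diagonal, so $\pi\in\mathcal{D}_{\vec k}$.

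The main obstacle is the forward direction's structural claim that the gap labels fill \emph{entire} columns rather than merely being inactive: this requires controlling precisely which columns the intermediate $W$-steps extend, and the delicate step is ruling out that an old column with bottom $\le a$, or a partially built new column, survives as an active entry exceeding $a$ at the moment $d$ is placed. I expect to handle this by an induction on the window length $d-a$, using the max-active dynamics to show the active entries behave like a stack in which $a$ stays pinned at the bottom until every larger entry introduced after it has been buried.
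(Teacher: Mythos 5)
Your proposal is correct and takes essentially the same route as the paper: both directions rest on tracking the active entries and on the equivalence between the gap condition and the largest-active placement rule, with your forward direction simply spelling out in detail what the paper dismisses as immediate. The one presentational difference is that the paper certifies $\pi\in\mathcal{D}_{\vec{k}}$ up front via the explicit space-counting bound $t_{j+1}\leq k_1+\cdots+k_j+j+1$, whereas you absorb this into the induction — which is legitimate, since a $W$-step finds an active entry exactly when the path's current height is positive, so the algorithm running to completion forces the path to stay weakly above the diagonal.
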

\begin{proof}
The proof follows similarly to that of Lemma 2.7 in \cite{garsia2020sweep}.

The ``only if" direction is immediate. Since $d$ is placed directly below $a$, it follows that $a$ became active as soon as it was placed and remained active until $d$ arrived. If there are still unlabeled cells in the columns of $F$ that contain the labels $a+1,a+2,\dots, d-1$, then according to the placement rules, $d$ would have been placed below $d-1$, not $a$, leading to a contradiction.

The ``if" part of the proof is more involved. Given a tableau $F$ satisfying the stated conditions, we aim to show that $F$ can be obtained from some $\pi \in \mathcal{D}_{\vec{k}}$ using the filling algorithm $\eta_*$. Assume the first row of $F$ is $t_1, t_2, \dots, t_{\ell(\vec{k})}$. If we have proven that
$$t_{j+1} \leq k_1 + k_2 + \cdots + k_j + j + 1,$$ 
for all  $1 \leq j \leq \ell(\vec{k})-1$, then we define $\pi$ to be the $\vec{k}$-Dyck path with letters $S^{k_1},S^{k_2},\dots, S^{k_{\ell(\vec{k})}}$ placed at positions $t_1, t_2, \dots, t_{\ell(\vec{k})}$ respectively. 

Note that there are exactly $k_1 + k_2 + \cdots + k_j + j$ cells in the first $j$ columns of $F$. Given this, if for some $j$ we had $t_{j+1} > k_1 + k_2 + \cdots + k_j + j + 1$, then the increasing conditions on both the first row and all columns would not leave enough space for any entries $a < t_{j+1}$, leading to a contradiction.

It remains to show that the entries in $F$ are placed as they would be by our filling algorithm $\eta_*$. Suppose that the entries of $F$ are placed one by one, in increasing order, as we find them. The increasing conditions on both the first row and all columns force each entry to be placed directly under some active entry. We need to show that this active entry is the larger one. Suppose, for the sake of contradiction, that the entry $d$ is placed under an active entry $a$ that is smaller than the larger active entry $d-1$ at that moment. This would result in $a<d-1$, and there exist unlabeled cells in the columns of $F$ that contain the labels $a+1,a+2,\dots, d-1$, leading to a contradiction. 

This completes the proof.
\end{proof}

As shown in the above example, the resulting filling tableau $\eta_{*}(\pi)\in \mathcal{NF}_{\vec{k}}$ does not belong to $\mathcal{F}_{\vec{k}}$. However, the transition from the smallest active to the largest active is clearly well defined. Note that for a given $\vec{k}$-Dyck path $\pi$, both the filling algorithms $\eta$ and $\eta_{*}$ applied to $\pi$ yield the same first row, corresponding to the indices of $S^{k_j}$ in $\eta(\pi)$ and $\eta_*(\pi)$. Since the entries in the first row uniquely determine the entire filling tableau, the same statements in \cite[Lemma 2.5 and Theorem 2.6]{xin2019sweep} yields

\begin{prop}
The \textbf{Filling Algorithm} $\eta_{*}$ defines a bijection from $\mathcal{D}_{\vec{k}}$ to $\mathcal{NF}_{\vec{k}}$.
\end{prop}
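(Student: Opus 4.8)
The plan is to deduce the bijectivity of $\eta_*$ directly from the already-established bijectivity of $\eta$ (that is, \cite[Theorem 2.6]{xin2019sweep}), exploiting the fact that $\eta$ and $\eta_*$ differ only in how they place the $W$-indexed entries and never in how they place the top-of-column entries that constitute the first row. First I would observe that, in both the original filling algorithm $\eta$ and the modified algorithm $\eta_*$, the rule governing an $S^*$-letter is identical: when the $i$-th letter of $\pi$ is $S^*$, the label $i$ is placed at the top of the first empty column. Since the first row of either tableau consists exactly of these top-of-column entries, the first row of $\eta_*(\pi)$ coincides with the first row of $\eta(\pi)$ for every $\pi \in \mathcal{D}_{\vec{k}}$. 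Consequently the assignment $\pi \mapsto (\text{first row of } \eta_*(\pi))$ equals $\pi \mapsto (\text{first row of } \eta(\pi))$, which is a bijection from $\mathcal{D}_{\vec{k}}$ onto the set of admissible first rows $t_1 < t_2 < \cdots < t_{\ell(\vec{k})}$ satisfying $t_{j+1} \le k_1 + \cdots + k_j + j + 1$, by \cite[Lemma 2.5 and Theorem 2.6]{xin2019sweep}.

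With this reduction in hand, the three required properties follow in turn. For well-definedness, Lemma \ref{lem-NFprop} shows that $\eta_*(\pi)$ satisfies the defining conditions of $\mathcal{NF}_{\vec{k}}$, so $\eta_*$ does map $\mathcal{D}_{\vec{k}}$ into $\mathcal{NF}_{\vec{k}}$. For injectivity, if $\eta_*(\pi) = \eta_*(\pi')$ then the two tableaux share the same first row, whence $\pi = \pi'$ because $\pi \mapsto (\text{first row})$ is already injective. For surjectivity, I would take any $F \in \mathcal{NF}_{\vec{k}}$, read off its first row $t_1 < \cdots < t_{\ell(\vec{k})}$, observe (exactly as in the proof of Lemma \ref{lem-NFprop}) that this row is admissible, let $\pi$ be the unique $\vec{k}$-Dyck path with $S^{k_j}$ at position $t_j$, and then argue that $\eta_*(\pi) = F$.

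The crux, and the step I expect to be the main obstacle, is the claim implicit in the last sentence: that the first row determines the entire tableau within $\mathcal{NF}_{\vec{k}}$, the analogue for the largest-active rule of \cite[Lemma 2.5]{xin2019sweep}. I would establish this by inserting the non-first-row labels one at a time in increasing order and showing each is forced into a unique cell. Given the fixed first row and the column heights $k_i+1$, the increasing-column condition together with the characterization in Lemma \ref{lem-NFprop} leaves exactly one legal position for each successive label: placing it anywhere other than directly beneath the largest currently available active entry would either violate a column inequality or create a pair $a < d-1$, with $d$ immediately below $a$, for which $a+1, \ldots, d-1$ fail to occupy entire columns, contradicting membership of $F$ in $\mathcal{NF}_{\vec{k}}$. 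Hence both $F$ and $\eta_*(\pi)$ are the unique completion of their common first row, so $\eta_*(\pi) = F$, which finishes surjectivity and with it the proof.
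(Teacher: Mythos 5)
Your proposal is correct and follows essentially the same route as the paper: the paper also notes that $\eta$ and $\eta_*$ produce identical first rows, that the first row uniquely determines the whole tableau, and then invokes \cite[Lemma 2.5 and Theorem 2.6]{xin2019sweep}. Your write-up merely fills in the details the paper leaves implicit, in particular the forcing argument showing that within $\mathcal{NF}_{\vec{k}}$ each non-first-row label must sit beneath the largest active entry, which the paper asserts without proof.
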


\begin{dfn}
Given $\pi\in \mathcal{D}_{\vec{k}}$, define the \textbf{depth labeling sequence} $d(\pi):=(d_1,d_2,\cdots,d_{\ell})$ as the entries in its first row of $\gamma_{*}(\eta_{*}(\pi))$ obtained by applying \textbf{Filling Algorithm $\eta_*$} and \textbf{Ranking Algorithm $\gamma_*$}. Then the \textbf{depth} of $\pi$ is $$\mathrm{depth}(\pi):=d_1+d_2+\cdots+d_{\ell}.$$
\end{dfn}

\begin{exa}\label{exa-depth}
The following tableaux are obtained by applying Filling Algorithm $\eta_*$ and Ranking Algorithm $\gamma_*$ to $\pi=S^4 W S^2 W W W W S^3 W W S^1 W W W$, we have $d(\pi)=(0,1,3,4)$ and $\mathrm{depth}(\pi)=0+1+3+5=9$.
\begin{figure}[H]
\centering
\begin{ytableau}
1 & 3 & 8 & 11\\
2 & 4 & 9 & 12\\
6 & 5 & 10 & \none\\
7 & \none & 13 & \none\\
14
\end{ytableau}\qquad
\begin{ytableau}
0 & 1 & 3 & 5\\
1 & 2 & 4 & 6\\
2 & 3 & 5 & \none\\
3 & \none & 6 & \none\\
4
\end{ytableau}
\vspace{0.5cm}
\caption{Filling tableau $\eta_{*}(\pi)$ and Ranking tableau $\gamma_{*}(\eta_{*}(\pi))$.}
\end{figure}
\end{exa}

By comparing the values of bounce and depth in Example \ref{exa-bounce} and Example \ref{exa-depth}, we find that these are two distinct statistics. Thus, we introduce two new $q,t$-polynomials

\begin{dfn}
\begin{equation}
\widetilde{C}_{\vec{k}}(q,t):=\sum_{\pi\in \mathcal{D}_{\vec{k}}}q^{\mathrm{area}(\pi)}t^{\mathrm{depth}(\pi)},
\end{equation}
\begin{equation}
\widetilde{C}_{\mathcal{K}}(q,t):=\sum_{\pi\in \mathcal{D}_{\mathcal{K}}}q^{\mathrm{area}(\pi)}t^{\mathrm{depth}(\pi)}.
\end{equation}
\end{dfn}

In \cite{pappe2022area}, it was proven that $\widetilde{C}_{(1,1,\cdots,1)}(q,t)$ is $q,t$-symmetric, which covers the case of classical Dyck paths. We will investigate their $q,t$-symmetry in other cases in the next section.

\section{$q,t$-symmetry of $\widetilde{C}_{\mathcal{K}}(q,t)$}

In this section, we will prove our main result, Theorem \ref{thm-K}, by first establishing its generalization, Theorem \ref{thm-aK}. Our approach is to construct an involution  $\omega$ that interchanges area and depth. The involution is based on a duality involving a combinatorial object $\mathcal{LBT}_ {\mathcal{K}}$, which we refer to as \textbf{labeled branch trees}.

We present a simple flowchart illustrating the involution $\omega$ on $\mathcal{D}_{\mathcal{K}}$.
$$\omega: \mathcal{D}_{\mathcal{K}} \xrightarrow[\text{Definition }\ref{dfn-Filling Algorithm-Max}]{\text{Filling algorithm } \eta_*} \mathcal{NF}_{\mathcal{K}}
\xrightarrow[\text{Definition }\ref{dfn-tableaux2tree}]{\delta} \mathcal{LBT}_ {\mathcal{K}}
\xrightarrow[\text{Definition }\ref{dfn-dual}]{dual} \mathcal{LBT}_ {\mathcal{K}}
\xrightarrow{ \delta^{-1}} \mathcal{NF}_{\mathcal{K}}
\xrightarrow{ \eta_{*}^{-1}} \mathcal{D}_{\mathcal{K}}.$$
Both of $\eta_*$ and
$\delta$ are bijections.

\subsection{Labeled branch trees}
We first need to review some definitions from graph theory.
\begin{dfn}
A \textbf{tree} is a connected graph containing no cycles. A \textbf{rooted tree} is a tree in which one vertex has been designated the root. A \textbf{plane tree} is a rooted tree in which the children of each node are linearly ordered.
\end{dfn}

In diagrams, we usually keep the root at the top and list other vertices below it. There are many bijections between $\mathcal{D}_{n}$ and $\mathcal{T}_{n+1}$, where $\mathcal{T}_{n+1}$ denotes the set of all plane trees with $n+1$ nodes. Three such examples can be found in \cite{pappe2022area}.

Now, we aim to establish a bijection between $\mathcal{D}_{\vec{k}}$ and a family of labeled plane trees $\mathcal{LBT}_{\vec{k}}$. For this, we define some terminology based on the concepts from \cite{pappe}.

\begin{dfn}
A \textbf{leaf} is any vertex having no children. An \textbf{extended leaf} is an unlabeled path graph with exactly one end-vertex designated as the leaf. The \textbf{length} of an extended leaf $E$, denoted by $\ell(E)$, is the number of edges in $E$.
The \textbf{top vertex} of $E$ is the vertex farthest from its leaf.
\end{dfn}

\begin{dfn}\cite{pappe}
For any $T\in \mathcal{T}_{n+1}$, there exists a unique \textbf{extended leaf decomposition}. Let $v_i$ be the $i$-th leaf, as read from left to right, of a plane tree $T$ with $m$ leaves. For each leaf $v_i$, we trace the path from $v_i$ to the closer of the two:
\begin{enumerate}
\item the root, or
\item the closest ancestor of $v_i$ that has two or more children, and $v_i$ is not the leftmost of those children nor a descendant of the leftmost child.
\end{enumerate}
Each path corresponds to an extended leaf.
\end{dfn}

\begin{exa}
In Figure \ref{fig-extendedleaf}, $T$ have four leaves and four extended leaves. The red path is an extended leaf of lenth $4$ in $T$,  and its top vertex is the root of $T$.
\begin{figure}[H]
    \centering
    \includegraphics[width=0.3\linewidth]{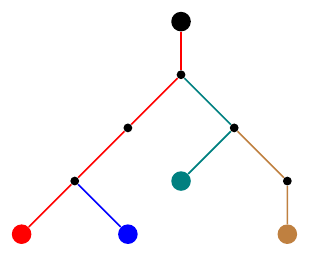}
    \caption{The extended leaf decomposition of a plane tree $T\in \mathcal{T}_{10}$.}
    \label{fig-extendedleaf}
\end{figure}
\end{exa}

We extend the definition of an extended leaf and refer to it as a \textbf{branch}. At the same time, we introduce the notion of \textbf{branch decomposition} for a tree.

\begin{dfn}
A \textbf{branch} of a tree $T \in \mathcal{T}_{n+1}$ is a path graph of any length from one vertex to its descendants.
The \textbf{length} of a branch $B$, denoted by $\ell(B)$, is the number of edges in $B$. The branch $B$ contains $\ell(B)+1$ vertices, and the \textbf{top vertex} of $B$ is the vertex closest to the root of the tree $T$.
\end{dfn}

Note that a branch does not necessarily contain a leaf of the tree $T$.

\begin{dfn}
The \textbf{branch decomposition} of a tree $T\in \mathcal{T}_{n+1}$ is the partitioning of the tree into a specific number of branches, subject to three constraints:
\begin{enumerate}
\item Each vertex (expect for the top vertex) on a branch is the leftmost descendant of its top vertex;
\item The edges of the branches do not intersect;
\item The union of the edge sets of all branches is exactly the edge set of the original tree $T$.
\end{enumerate}
We say that the branch decomposition of $T$, which is still denoted by $T$ when there is no confusion, is a \textbf{branch tree}.
\end{dfn}

It is immediate that the following facts hold: A branch $B$ with length $\ell(B) \ge 2$ can be further decomposed into two branches; Each leaf of a tree belongs to a different branch; A tree $T \in \mathcal{T}_{n+1}$ can be decomposed into at most $n$ branches, and at least as many branches as the number of leaves in the tree.

Preorder traversal follows a `root-left-right' sequence: the algorithm visits the current node before recursively exploring its children. Given a branch tree $T$ with $\ell$ branches, we refer to the branches  $B_1,B_2,\cdots,B_{\ell}$ as those obtained by traversing the tree in preorder, with the branch $B_1$ called the \textbf{initial branch}. We say $B_j$ is a child of $B_i$ if the top vertex of $B_j$ is one of vertices of $B_i$. If several different branches, such as $B_i$, $B_j$, and  $B_k$, share a common top vertex, and $B_i$ is to the left of $B_j$, and $B_j$ is to the left of $B_k$, we say that $B_j$ is a child of $B_i$, and $B_k$ is a child of $B_j$. It is easy to observe that each vertex in the branch is connected to at most one child. We always place a child to the right of its parent.

Next, we present a labeling algorithm for a branch tree $T$, which is described recursively.
For each branch $B_i$ with length $k_i:=\ell(B_i)$, place one \textbf{red label} on the left side of the branch $B_i$ and $k_i$ \textbf{blue labels} on its right side (one blue label on each edge on the right side). If an edge is the $m$-th edge starting from the top vertex of $B_i$, we call it the $m$-th edge of $B_i$. For each edge $E$ on the right side of $B_i$, there are two vertices. We call the vertex of edge $E$ that is closest to the top vertex of $B_i$ the \textbf{start-vertex}, and the other vertex the \textbf{end-vertex}. 

\begin{dfn}[\textbf{Labeling Algorithm $\mathcal{L}$}]\label{dfn-Labeling}

Input: A branch tree $T \in \mathcal{T}_{n+1}$ with $\ell$ branches.

Output: A labeled branch tree $\mathcal{L}(T)$ with labeling entries $1,2,\cdots,\ell+n$.

\noindent As a general rule, when labeling any branch, its left side is always processed first, followed by its right side. Furthermore, the labeling of the right side strictly begins with the edge connected to the top vertex of the branch. The specific steps are as follows:
\begin{enumerate}
\item  Mark the number $1$ on the left side of the initial branch $B_1$. If the top vertex of $B_1$ is connected to a child branch $B_i$, then mark the number $2$ on the left side of the branch $B_i$. Otherwise, continue marking the edges with numbers $2,3,\dots, m+1$ until the end-vertex of the $m$-th edge of $B_1$ is connected to a child branch.

\item Assume that we mark an edge as $k$ and its end-vertex is connected to a child branch $B_j$. In that case, mark the left side of $B_j$ with $k+1$.

\item Assume that we mark the left side of $B_j$ with $k$:
   \begin{enumerate}
   \item If the top vertex is connected to a child branch $B_p$, mark the number $k+1$ on the left side of $B_p$;
   \item Otherwise, mark the edges with numbers $k+1,k+2,\dots, k+m$ until the end-vertex of the $m$-th edge of $B_j$ is connected to a child branch. Then, return to step $(2)$.
   \end{enumerate}

\item Once the marking of the right side of the branch is complete (and it does not belong to step $(2)$), return to the parent branch and resume marking the unmarked segments on its right side.
\end{enumerate}
\end{dfn}

For a labeled branch tree $\mathcal{L}(T)$ with $\ell$ branches, we partition it into $\ell$ labeled path graphs $\tilde{B}_1,\tilde{B}_2,\cdots,\tilde{B}_\ell$ ordered from smallest to largest based on the values of their red labels. From the labeled path graphs $\tilde{B}_1,\tilde{B}_2,\cdots,\tilde{B}_\ell$, it is straightforward to reconstruct the labeled branch tree. Note that the order of the $\tilde{B}_i$'s may differ from that of the original branches $B_i$.

Given any $\vec{k}$, define the set of \textbf{labeled branch tree of type $\vec{k}$} by
\begin{align*}
 \mathcal{LBT}_{\vec{k}}:=  \{\mathcal{L}(T): T \in \mathcal{T}_{|\vec{k}|+1}\text{ and } \ell(\tilde{B_i}) = k_i \text{ for all } 1\leq i\leq \ell(\vec{k}) \}.
\end{align*}
Similar to other notations, we use $\mathcal{LBT}_{\mathcal{K}}$ to denote the union of all $\mathcal{LBT}_{\vec{k}}$ for $\vec{k}\in \mathcal{K}$. 

When restricting the set, we will still use $T$ to denote a labeled branch tree.

\begin{exa}
In Figure \ref{fig-labledbranchtree}, there are four labeled path graphs $\tilde{B}_1$ (red), $\tilde{B}_2$ (blue), $\tilde{B}_3$ (brown), and $\tilde{B}_4$ (teal). $\ell(\tilde{B}_1)=1$, $\ell(\tilde{B}_2)=2$, $\ell(\tilde{B}_3)=1$ and $\ell(\tilde{B}_4)=1$. $\tilde{B}_2$ is a child of $\tilde{B}_1$, $\tilde{B}_3$ is a child of $\tilde{B}_2$, and $\tilde{B}_4$ is a child of $\tilde{B}_3$.

\begin{figure}[H]
    \centering
    \includegraphics[width=0.4\linewidth]{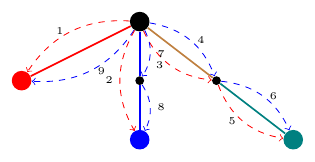}
    \caption{A labeled branch tree $ T \in \mathcal{LBT}_{(1,2,1,1)}$.}
    \label{fig-labledbranchtree}
\end{figure}
\end{exa}

We now construct a bijection between the set of filling tableaux $\mathcal{NF}_{\vec{k}}$ and the set of labeled branch tree $\mathcal{LBT}_{\vec{k}}$ for any vector $\vec{k} = (k_1,k_2,\dots,k_{\ell})$.

\begin{dfn}[\textbf{The map} $\delta$]\label{dfn-tableaux2tree}
Given $F\in \mathcal{NF}_{\vec{k}}$, we associate $F$ with $\ell$ labeled path graphs $\tilde{B}_1,\tilde{B}_2,\cdots,\tilde{B}_{\ell}$.
The $i$-th column of $F$ corresponds to a labeled path graph $\tilde{B}_i$ of length $k_i$, with the red label as the first entry and the blue labels as the remaining $k_i$ entries, arranged from top to bottom. Define $\delta(F)$ to be the labeled branch tree obtained from the labeled path graphs $\tilde{B}_1,\tilde{B}_2,\cdots,\tilde{B}_{\ell}$.
\end{dfn}

\begin{prop}\label{prop-mapdelta}
The map $\delta$ defines a bijection from $\mathcal{NF}_{\vec{k}}$ to $\mathcal{LBT}_{\vec{k}}$.
\end{prop}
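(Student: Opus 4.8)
The plan is to prove bijectivity by constructing an explicit inverse $\delta^{-1}$ and verifying that both $\delta$ and $\delta^{-1}$ land in the correct sets and compose to the identity. The guiding principle is the \textbf{attachment rule}: in a labeled branch tree, each branch $B$ other than the initial one has a red label $r>1$, and by the structure of the Labeling Algorithm $\mathcal{L}$ (Definition \ref{dfn-Labeling}) the red label of a child is always one more than the label marked immediately before it. Thus $B$ must be attached at the unique vertex carrying the label $r-1$ (its top vertex being the end-vertex of the edge labeled $r-1$, or a vertex already carrying label $r-1$ when $B$ shares a top vertex with its parent). This is precisely the tree-side mirror of the rule ``place $i$ at the top of the first empty column'' in $\eta_*$, so I expect the columns of $F$ and the branches of the tree to correspond entry-for-entry.

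For the inverse map I would read each branch of a tree $T\in\mathcal{LBT}_{\vec{k}}$ as a column (red label on top, blue labels beneath in order from the top vertex downward) and order the resulting columns by increasing red label to obtain a tableau $\psi(T)$; since the $i$-th branch (sorted by red label) has length $k_i$, its column has $k_i+1$ entries, matching both the shape required for $\mathcal{NF}_{\vec{k}}$ and the convention that the first row of $F$ is increasing. The crucial step is to show $\psi(T)$ satisfies the characterization of Lemma \ref{lem-NFprop}. Increasingness of the first row and of each column follows from the preorder nature of $\mathcal{L}$, since a vertex always receives a smaller label than its descendants and red labels increase left to right. For the covering condition, I would translate it into tree language: if $a$ and $d$ are consecutive entries of one column with $d$ directly below $a$ and $d>a+1$, then the vertices labeled $a$ and $d$ are consecutive on a branch, and the intervening labels $a+1,\dots,d-1$ are exactly the labels of the complete subtree (a union of complete child branches) that $\mathcal{L}$ visits after reaching the vertex $a$ and before continuing down to $d$. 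These complete branches are exactly entire columns of $\psi(T)$, which is the covering condition.

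Conversely, to see $\delta$ is well-defined I would start from $F\in\mathcal{NF}_{\vec{k}}$ and assemble the tree branch by branch in increasing order of red label, attaching the branch with red label $r>1$ at the vertex carrying the label $r-1$. By induction I would show that this vertex always lies on an already-placed branch (its branch has red label at most $r-1<r$) and that inserting the new branch as the next child to the right preserves the three branch-decomposition constraints: interior vertices remain leftmost descendants, edges do not cross, and the edge sets partition $T$. The covering condition of Lemma \ref{lem-NFprop} is what guarantees that whenever a gap $a+1,\dots,d-1$ appears between two consecutive entries of a column it is filled exactly by complete columns, hence by complete child subtrees, so that the entry $d$ indeed becomes the next vertex of its branch. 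Running $\mathcal{L}$ on the resulting tree then reproduces the entries of $F$, yielding $\delta\circ\psi=\mathrm{id}$ and $\psi\circ\delta=\mathrm{id}$.

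The main obstacle I anticipate is this last compatibility: proving rigorously that the order in which $\mathcal{L}$ emits labels coincides with the numerical order of the entries of $F$, and that the covering condition of Lemma \ref{lem-NFprop} is genuinely equivalent to ``the labels strictly between two consecutive vertices of a branch constitute a union of complete child branches.'' Both directions of this equivalence require a careful induction on the preorder traversal, tracking which vertex is currently being extended and verifying that the leftmost-descendant, non-crossing, and partition constraints are preserved at each attachment; the remaining verifications are routine once this correspondence is in hand.
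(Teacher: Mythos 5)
Your proposal is correct and takes essentially the same approach as the paper: the paper's proof also proceeds by constructing the inverse map, reading each labeled path graph $\tilde{B}_i$ of $T\in\mathcal{LBT}_{\vec{k}}$ as the $i$-th column of a tableau (red label in the first row, blue labels below) and invoking the characterization in Lemma \ref{lem-NFprop} to conclude that the result lies in $\mathcal{NF}_{\vec{k}}$. Your write-up is in fact more detailed than the paper's, which simply asserts that the conditions of Lemma \ref{lem-NFprop} hold ``by the Labeling algorithm $\mathcal{L}$'' --- exactly the preorder-versus-numerical-order compatibility that you identify as the main point requiring a careful induction.
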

\begin{proof}
For a labeled branch tree $T \in \mathcal{LBT}_{\vec{k}}$ with $\ell$ labeled path graphs $\tilde{B}_1,\tilde{B}_2,\cdots,\tilde{B}_\ell$, we associate $T$ with a tableau $F$ consisting of $\ell$ columns, where the $i$-th column has $k_i+1$ cells. For $1\leq i\leq \ell$, we place the red label of $\tilde{B}_i$ in the first row, column $i$, and the blue labels of $\tilde{B}_i$ in the remaining $k_i$ cells, arranged from top to bottom. Then, by the Labeling algorithm $\mathcal{L}$, the filling tableau $F$ satisfies the conditions in Lemma \ref{lem-NFprop}.
Therefore, we have $F \in \mathcal{NF}_{\vec{k}}$. In fact, the above describes the inverse of the map $\delta$. The proof is complete.
\end{proof}

\begin{dfn}[\textbf{Dual Algorithm $\mathrm{dual}$}]\label{dfn-dual}
Input: a labeled branch tree $T\in \mathcal{LBT}_{\mathcal{K}}$.
Output: a labeled branch tree $T^{\mathrm{dual}}\in \mathcal{LBT}_{\mathcal{K}}$.
\begin{enumerate}
    \item Perform the branch decomposition to $T$ and reorder the resulting elements according to the increasing red labels, yielding a sequence of labeled path graphs $\tilde{B}_1,\tilde{B}_2,\cdots,\tilde{B}_{\ell}$.
    \item For each $\tilde{B}_j$ and its parent $\tilde{B}_i$ with $i<j$, if the length from the top vertex of $\tilde{B}_j$ to the top vertex of $\tilde{B}_i$ is $m$, reattach $\tilde{B}_j$ to $\tilde{B}_i$ such that the new length from the top vertex of $\tilde{B}_j$ to the top vertex of $\tilde{B}_i$ is $\ell(\tilde{B}_i)-m$. 
    \item Apply the labeling algorithm $\mathcal{L}$ to the (unlabeled) branch tree obtained in step $(2)$.
\end{enumerate}
\end{dfn}

\subsection{Proof of Theorem \ref{thm-aK}.}

Let $\tilde{B}_1,\tilde{B}_2,\cdots,\tilde{B}_{\ell}$ be the sequence of labeled path graphs of $T\in \mathcal{LBT}_{\mathcal{K}}$, ordered increasingly according to their red labels. We associate two sequences of nonnegative integers, $a(T)=(a_1,a_2,\cdots,a_{\ell})$ and $d(T)=(d_1,d_2,\cdots,d_{\ell})$ with $\tilde{B}_1,\tilde{B}_2,\cdots,\tilde{B}_{\ell}$. First, we set $\tilde{B}_1$ to be associated with $a_1:=0$. Then for each $\tilde{B}_j$ and its parent $\tilde{B}_i$ with some $i<j$, we recursively associate $a_j:=a_i+e_{ij}$, where $e_{ij}$ is the length from the top vertex of $\tilde{B}_j$ to the leaf (ending node) of $\tilde{B}_i$. Similarly, we set $\tilde{B}_1$ to be associated with $d_1:=0$. Then for each $\tilde{B}_j$ and its parent $\tilde{B}_i$ with $i<j$, we recursively associate $d_j:=d_i+s_{ij}$, where $s_{ij}$ is the length from the top vertex of $\tilde{B}_j$ to the top vertex (starting node) of $\tilde{B}_i$.

We define the \textbf{area} and \textbf{depth} of $T\in \mathcal{LBT}_{\vec{k}}$ by
\begin{align*}
\mathrm{area}(T)&:=a_1+a_2+\cdots+a_{\ell},\\
\mathrm{depth}(T)&:=d_1+d_2+\cdots+d_{\ell}.
\end{align*}

\begin{prop}\label{prop-areadepth}
Given $T\in \mathcal{LBT}_{\mathcal{K}}$, then we have $\pi=\eta_{*}^{-1}(\delta^{-1}(T)) \in \mathcal{D}_{\mathcal{K}}$. Furthermore, $a(T)=(a_1,a_2,\cdots,a_{\ell})$ is exactly the area sequence of $\pi$, and $d(T)=(d_1,d_2,\cdots,d_{\ell})$ is exactly the depth labeling sequence of $\pi$. Thus, we have $\mathrm{area}(T)=\mathrm{area}(\pi)$ and $\mathrm{depth}(T)=\mathrm{depth}(\pi)$.
\end{prop}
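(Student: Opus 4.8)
The first assertion is immediate and I would dispose of it first: by Proposition \ref{prop-mapdelta} the map $\delta$ is a bijection $\mathcal{NF}_{\vec k}\to\mathcal{LBT}_{\vec k}$, and $\eta_*$ is a bijection $\mathcal{D}_{\vec k}\to\mathcal{NF}_{\vec k}$, so $\delta^{-1}(T)\in\mathcal{NF}_{\vec k}$ and $\pi=\eta_*^{-1}(\delta^{-1}(T))\in\mathcal{D}_{\vec k}\subseteq\mathcal{D}_{\mathcal K}$. The substance is the identification of the two sequences, and the plan is to read both off a single object, namely the preorder traversal of the branch tree $T$ encoded by the Labeling Algorithm $\mathcal{L}$. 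I would first record the dictionary between $T$ and $\pi$. Since the labels $1,2,\dots,|\vec k|+\ell$ are exactly the positions of the letters of the SW-word of $\pi$, the red labels (the top entries of the columns of $\delta^{-1}(T)$) correspond to the $S$-steps and the blue labels to the $W$-steps; moreover the columns are ordered by increasing red label, so the red label of $\tilde B_j$ is the position $t_j$ of the $j$-th $S$-step, with $t_1<t_2<\dots<t_\ell$. The one structural fact I would extract from $\mathcal{L}$ is that if $\tilde B_j$ is attached to its parent $\tilde B_i$ at the end-vertex of the $m$-th edge (so $s_{ij}=m$ and $e_{ij}=k_i-m$), then the red label of $\tilde B_j$ is exactly one more than the label sitting at that attachment vertex, which in $\delta^{-1}(T)$ lies in row $m+1$ of column $i$.

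For the depth sequence I would argue directly from the Ranking Algorithm $\gamma_*$ (Definition \ref{dfn-ranking}). Writing $\rho_i$ for the rank assigned to the top entry of column $i$ of $\gamma_*(\eta_*(\pi))$, the entry in row $m+1$ of column $i$ has rank $\rho_i+m$. For $j\ge 2$ the top entry of column $j$ equals $A+1$, where $A$ is the attachment label of $\tilde B_j$ in its parent $\tilde B_i$; by the dictionary $A$ occupies row $m+1$ of column $i$ and hence has rank $\rho_i+m$, so $\gamma_*$ assigns $\rho_j=\rho_i+m=\rho_i+s_{ij}$, while $\rho_1=0$. This is precisely the recursion defining $d(T)$ with the same base case, so by induction on $j$ (legitimate because the parent index satisfies $i<j$) I would conclude $\rho_j=d_j$ for all $j$; that is, the first row of $\gamma_*(\eta_*(\pi))$ is $d(T)$, the depth labeling sequence of $\pi$.

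For the area sequence I would instead track the actual height of the lattice path along the traversal, reading $S^{k_i}$ as a step of height $+k_i$ and $W$ as a step of height $-1$, so that the height reached just before the $S$-step that opens $\tilde B_j$ is its starting rank $a_j$. The key lemma I would prove is that a complete excursion into a subtree is height-neutral: a branch of length $k$ contributes $+k$ when opened and $-k$ over its $k$ edges, and each child subtree contributes $0$ by induction. Granting this, when the traversal of $\tilde B_i$ (opened at height $a_i$, reaching height $a_i+k_i$ at its top vertex) descends to the attachment vertex, the $m$ downward edges lower the height to $a_i+k_i-m$ while the intervening child excursions cancel; thus the height just before $\tilde B_j$ opens is $a_i+k_i-m=a_i+e_{ij}$. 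Since this is the recursion defining $a(T)$ with base case $a_1=0$, the sequence $a(T)$ is the area sequence of $\pi$. Summing the two sequences then gives $\mathrm{area}(T)=\mathrm{area}(\pi)$ and $\mathrm{depth}(T)=\mathrm{depth}(\pi)$.

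The two inductions are routine once the setup is in place; the main obstacle is making the traversal dictionary rigorous, specifically verifying that the preorder labeling $\mathcal{L}$ reconstructs the SW-word of $\pi$ in order and that every subtree excursion is height-neutral, since this is exactly what converts the ``distance to the leaf of the parent'' quantity $e_{ij}$ into an honest statement about the starting ranks of $\pi$. Once the excursion-balancing and the consecutive-label fact recorded in the first paragraph are nailed down, both claims reduce to the matching base cases and one-step recursions above.
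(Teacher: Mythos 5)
Your proof is correct and follows essentially the same route as the paper: the paper's Lemma \ref{lem-anotherareadepth} establishes exactly your two recursions (in its notation $a_j=a_i+k_i-m_{ij}+1$ and $d_j=d_i+m_{ij}-1$, where $m_{ij}=s_{ij}+1$), obtaining depth directly from the Ranking Algorithm and area by tracking starting ranks along the path. Your ``height-neutral excursion'' lemma is just the tree-side restatement of the paper's appeal to Lemma \ref{lem-NFprop}, by which the labels intervening between consecutive entries of a column occupy entire columns and hence contribute zero net change in rank.
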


Before we begin proving the proposition, let us first review the definitions of area and depth for $\pi \in \mathcal{D}_{\vec{k}}$. The starting rank sequence $r(\pi) = (r_1,r_2,\dots,r_{|\vec{k}|+\ell(\vec{k})})$ is obtained recursively by setting $r_1 = 0$, and for $1\leq i\leq |\vec{k}|+\ell(\vec{k})-1$, we define $r_{i+1} = r_i + k_j$ if the $i$-th letter $\pi_i = S^{k_j}$, or $r_{i+1} = r_i-1$ if $\pi_i = W$. The area sequence of $\pi$ is $a(\pi) = (a_1,a_2,\dots,a_{\ell})$, where $a_i$ is the starting rank of the $i$-th red arrow ($S^{*}$ step). The area of $\pi$ is given by area($\pi) = a_1 + a_2 + \cdots + a_{\ell}$. The depth labeling sequence $d(\pi) = (d_1,d_2,\dots,d_{\ell})$ consists of the entries in the first row of $\gamma_*(\eta_*(\pi))$, and $\mathrm{depth}(\pi)=d_1 + d_2 + \cdots + d_{\ell}$.

In the following lemma, we provide equivalent definitions of the area sequence and the depth sequence in terms of the filling tableau $F$.

\begin{lem}\label{lem-anotherareadepth}
Given $\pi \in \mathcal{D}_{\vec{k}}$, let $t(F) = (t_1,t_2,\dots,t_{\ell})$ be the first row entries of the filling tableau $F = \eta_*(\pi)$. For each entry $t_j$, where $2 \leq j \leq \ell$, we can assume that $t_j - 1$ is located in column $i$ with $i<j$, and let $m_{ij}$ denote its row index in $F$. Then, the area sequence $a(\pi) = (a_1,a_2,\dots,a_{\ell})$ satisfies $a_1 = 0$, and for $j\geq 2$, $a_j = a_{i} + k_i - m_{ij} + 1$. The depth labeling sequence $d(\pi) = (d_1,d_2,\dots,d_{\ell})$ satisfies $d_1 = 0$, and for $j\geq 2$, $d_j = d_{i} + m_{ij}-1$.
\end{lem}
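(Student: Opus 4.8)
The plan is to prove the two formulas separately, reducing each to a structural statement about the tableau $F=\eta_*(\pi)$ together with its ranking $\gamma_*(F)$. Throughout I will use three facts: the first-row entries $t_1<t_2<\cdots<t_\ell$ of $F$ are exactly the positions of the north steps $S^{k_1},\dots,S^{k_\ell}$ in the SW-word of $\pi$; every non-top entry of a column is the position of a $W$ step; and $\dot r_p=r_{p+1}$, so the ending rank of arrow $p$ equals the starting rank of arrow $p+1$.

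The depth formula I expect to follow immediately from the Ranking Algorithm $\gamma_*$ (Definition \ref{dfn-ranking}). Since $\gamma_*$ assigns to the entries of column $i$ the ranks $d_i,d_i+1,\dots,d_i+k_i$ from top to bottom, the entry sitting in row $m$ of column $i$ receives rank $d_i+m-1$; in particular the entry $t_j-1$, which lies in row $m_{ij}$ of column $i$, has rank $d_i+m_{ij}-1$. On the other hand the top entry of column $j$ is $t_j=(t_j-1)+1$, so by the defining rule of $\gamma_*$ the rank of the top of column $j$ is precisely the rank of $t_j-1$. Hence $d_j=d_i+m_{ij}-1$, as claimed.

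For the area formula I would first reinterpret $a_j$ in rank language: since $a_j=r_{t_j}=\dot r_{t_j-1}$, the quantity $a_j$ is the \emph{ending rank of the arrow at position $t_j-1$}. The heart of the proof is then the following claim: within each column $i$, the arrow occupying row $m$ has starting rank $a_i$ when $m=1$ and $a_i+k_i-m+2$ when $2\le m\le k_i+1$. Granting this, the two cases fall out at once: if $m_{ij}=1$ then $t_j-1=t_i$ is the north step $S^{k_i}$, whose ending rank is $a_i+k_i=a_i+k_i-m_{ij}+1$; if $m_{ij}\ge 2$ then $t_j-1$ is a $W$ step with starting rank $a_i+k_i-m_{ij}+2$, hence ending rank $a_i+k_i-m_{ij}+1$. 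Either way $a_j=a_i+k_i-m_{ij}+1$.

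To establish the claim I would analyze the dynamics of the Filling Algorithm $\eta_*$ under the \textbf{largest active} rule. The active entries at any stage are the bottom cells of the incomplete columns, and because labels are inserted in increasing order, a newly created column (an $S$ step) always carries the current maximal label and each $W$ placed below the largest active entry only increases that column's bottom label. Consequently the incomplete columns behave as a \emph{stack}: an $S$ step pushes a new column on top, every $W$ is inserted into the top column, and a column is popped exactly when it is completed. The decisive observation is that a completed column contributes net height $0$ to the path, since its north step raises the height by $k_i$ and its $k_i$ subsequent $W$ steps lower it by $k_i$. Because the algorithm can return to insert the row-$m$ entry of column $i$ only after every column pushed above $i$ in the meantime has been completed and popped, all such intervening columns cancel in height; thus the starting rank just before inserting the row-$m$ entry of column $i$ equals the height immediately after its row-$(m-1)$ entry, which descends by one from the value $a_i+k_i$ reached right after the north step. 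A short induction on $m$ then yields the stated starting ranks. The main obstacle is making this stack analysis and the net-zero cancellation rigorous, that is, verifying that between two consecutive insertions into column $i$ the part of the stack above $i$ is entirely built up and dismantled; once that is in hand the lemma follows.
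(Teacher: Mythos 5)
Your proposal is correct and follows essentially the same route as the paper: the depth formula is read off directly from the Ranking Algorithm exactly as in the paper, and your area argument---ending rank of $t_j-1$ plus the claim that the row-$m$ entry of column $i$ has starting rank $a_i+k_i-m+2$ for $m\ge 2$---is the paper's computation of the rank subsequence $(r_{s_1},\dots,r_{s_{m_{ij}}})$ in different words. The only cosmetic difference is that you justify the net-zero cancellation of intervening letters via the stack (LIFO) dynamics of the largest-active rule, whereas the paper cites Lemma \ref{lem-NFprop} for the fact that those labels occupy entire columns; these are the same observation, since that lemma is itself proved by your dynamic argument.
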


\begin{proof}
We will prove the result case by case.

It is easy to observe that the $t_i$-th letter $\pi_{t_i}$ is $S^{k_i}$ for $1 \leq i \leq \ell$. If, for any entry $t_j$, the entry $t_j - 1$ is in row $1$, that is, $m_{ij}=1$, then $i$ must be equal to $j-1$. In this case, we have $a_j = a_{i} + k_i$ by the definition of the starting rank sequence.

Otherwise, suppose $t_j-1$ is located in column $i$ and row $m_{ij} (\geq 2)$. Assume that the first $m_{ij}$ entries in the $i$-th column of $F$ are $s_1, s_2,\cdots, s_{m_{ij}}$. It is clear that $s_1=t_i$ and $s_{m_{ij}}=t_j-1$. We have $\pi_{s_1}=S^{k_i}$ and $\pi_{s_p}=W$ for $2\leq p\leq m_{ij}$. Now, we will consider the rank subsequence $(r_{s_1}, r_{s_2},\cdots, r_{s_{m_{ij}}})$ in the starting rank sequence $r(\pi)$. We have $a_i = r_{s_1}$ and $a_j = r_{s_{m_{ij}}} -1$. It remains to show that $r_{s_{m_{ij}}} = r_{s_1} + k_i-m_{ij}+2$. If $s_1, s_2,\cdots, s_{m_{ij}}$ are consecutive numbers, i.e., $s_{p}= s_{p-1}+1$ for $2\leq p\leq m_{ij}$, then we have $r_{s_2}=r_{s_1}+k_i$ and $r_{s_p}=r_{s_{p-1}}-1$ for $3\leq p\leq m_{ij}$. Thus, $r_{s_{m_{ij}}} = r_{s_1} + k_i-m_{ij}+2$. Otherwise, assume that for some $2\leq z\leq m_{ij}$, we have $s_{z}>s_{z-1}+1$. By Lemma \ref{lem-NFprop}, we can conclude that the entries $s_{z-1}+1, s_{z-1}+2, \cdots, s_{z}-1$ occupy some of entire columns of $F$. In other words, in the SW-subword $(\pi_{s_{z-1}+1}, \pi_{s_{z-1}+2}, \cdots, \pi_{s_{z}-1})$, the number of $W$ is equal to the sum of $k_s$ where $k_s$ denotes the index of the column that the entries $s_{z-1}+1, s_{z-1}+2, \cdots, s_{z}-1$ occupy in $F$. Therefore, we also have $r_{s_2}=r_{s_1}+k_i$ and $r_{s_{p}} = r_{s_{p-1}} - 1$ for $3\leq p\leq m_{ij}$, and thus $r_{s_{m_{ij}}} = r_{s_1}+k_i-m_{ij}+2$. This concludes the analysis of the area sequence.

The statement about the depth labeling sequence is straightforward. By the Ranking algorithm, $d_j$ is the rank of $t_j$, which is equal to the rank of $t_j - 1$. Since the entry $t_j - 1$ is located in column $i$ and row $m_{ij}$ in $F$, its rank is $d_i+m_{ij}-1$. Therefore, we have $d_j=d_i+m_{ij}-1$.
\end{proof}

\begin{proof}[Proof of the Proposition \ref{prop-areadepth}]
Given $\pi \in \mathcal{D}_{\vec{k}}$, let $F=\eta_{*}(\pi)$ be the filling tableau with the first row entries $t(F) = (t_1,t_2,\dots,t_{\ell})$. By Proposition \ref{prop-mapdelta}, there exists a unique $T=\delta(F)\in \mathcal{LBT}_{\vec{k}}$. The $i$-th column of $F$ corresponds to a labeled path graph $\tilde{B}_i$ of lenght $k_i$ in $T$. By the definitions of the area and depth of $T$ and Lemma \ref{lem-anotherareadepth}, we only need to show that $e_{ij} =k_i-m_{ij}+1$ and $s_{ij}=m_{ij}-1$. This result follows trivially from the structure of the labeled path graphs, completing the argument.
\end{proof}

\begin{cor}
Given $T\in \mathcal{LBT}_{\mathcal{K}}$, we have $(T^{\mathrm{dual}})^{\mathrm{dual}}=T$. In particular,
\begin{equation*}
\mathrm{area}(T^{\mathrm{dual}})=\mathrm{depth}(T) \quad \text{and} \quad \mathrm{depth}(T^{\mathrm{dual}})=\mathrm{area}(T).
\end{equation*}
\end{cor}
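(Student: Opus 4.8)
The plan is to work entirely with the combinatorial data carried by a labeled branch tree and to exhibit $\mathrm{dual}$ as built from a single slot-reflection that is manifestly an involution. Since $\delta$ is a bijection (Proposition \ref{prop-mapdelta}) and $\eta_*$ and the labeling algorithm $\mathcal{L}$ are bijections, I would first record that a labeled branch tree $T\in\mathcal{LBT}_{\mathcal{K}}$ is determined by three pieces of data: its branches $\tilde{B}_1,\dots,\tilde{B}_\ell$ together with their lengths $k_i=\ell(\tilde{B}_i)$, the parent function $B\mapsto\mathrm{par}(B)$, and, for each non-initial $\tilde{B}_j$ with parent $\tilde{B}_i$, the attachment slot $m_{ij}\in\{1,\dots,k_i+1\}$ (equivalently $s_{ij}=m_{ij}-1$, with $e_{ij}=k_i-s_{ij}=k_i-m_{ij}+1$, so that $s_{ij}+e_{ij}=k_i$). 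A useful preliminary, read off from the filling tableau $F=\delta^{-1}(T)$ via Lemma \ref{lem-NFprop}, is that each cell hosts at most one child: the child sitting in column $i$, row $m$ is the unique branch whose red label equals the entry of that cell plus $1$, when such a branch exists. Consequently distinct children of a common parent occupy distinct slots, and the triple (branches, $\mathrm{par}$, slots) faithfully encodes $T$.

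I would then prove the central lemma: $\mathrm{dual}$ preserves the branches and their lengths, preserves the parent function, and reflects every slot by $m_{ij}\mapsto k_i+2-m_{ij}$, equivalently interchanges $s_{ij}\leftrightarrow e_{ij}$. That the branches and lengths survive is clear, since step $(2)$ of Definition \ref{dfn-dual} only moves attachment points and step $(3)$ only relabels. The reflection formula is exactly the prescription of step $(2)$: the length $s_{ij}$ from the top vertex of $\tilde{B}_j$ to the top vertex of $\tilde{B}_i$ is replaced by $\ell(\tilde{B}_i)-s_{ij}=e_{ij}$, so the new slot is $s_{ij}+1\mapsto e_{ij}+1=k_i+2-m_{ij}$. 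The real content is that $\mathrm{par}$ is unchanged: reattaching $\tilde{B}_j$ to the reflected vertex keeps its top vertex on the same branch $\tilde{B}_i$, so $\tilde{B}_i$ remains its parent. The delicate point is the chain convention for branches sharing a top vertex, where the reflection swaps the case $s_{ij}=0$ (a chained sibling attached at the shared top vertex of $\tilde{B}_i$) with the case $s_{ij}=k_i$ (a direct child attached at the leaf of $\tilde{B}_i$). In both extremes, and at every intermediate slot, the top vertex of $\tilde{B}_j$ lies on $\tilde{B}_i$; since each vertex of $\tilde{B}_i$ hosts at most one child of $\tilde{B}_i$, the reflected slots stay pairwise distinct, and $\mathcal{L}$ reconstructs a branch tree with the same parent function.

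With the central lemma in hand, both assertions follow quickly. For $(T^{\mathrm{dual}})^{\mathrm{dual}}=T$, the parent function and the lengths are common to $T$ and $T^{\mathrm{dual}}$, so applying the slot-reflection twice returns $m_{ij}\mapsto k_i+2-(k_i+2-m_{ij})=m_{ij}$; the underlying triple is restored, and since $\mathcal{L}$ is determined by the branch tree the labeled trees coincide. For the statistics, I would induct along the parent partial order, which is well-founded since $\mathrm{par}$ defines a rooted tree on the branches. Writing $a',d'$ for the sequences of $T^{\mathrm{dual}}$, the initial branch gives $a'_1=0=d_1$ and $d'_1=0=a_1$, and for a child $\tilde{B}_j$ of $\tilde{B}_i$ the reflected data yields $a'_j=a'_i+e'_{ij}=a'_i+s_{ij}$ and $d'_j=d'_i+s'_{ij}=d'_i+e_{ij}$, which are exactly the recursions defining $d_j=d_i+s_{ij}$ and $a_j=a_i+e_{ij}$. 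Hence $a'_B=d_B$ and $d'_B=a_B$ for every branch $B$, and summing gives $\mathrm{area}(T^{\mathrm{dual}})=\mathrm{depth}(T)$ and $\mathrm{depth}(T^{\mathrm{dual}})=\mathrm{area}(T)$.

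The main obstacle I anticipate is the verification that $\mathrm{dual}$ preserves the parent function, that is, that reattachment followed by the canonical relabeling $\mathcal{L}$ does not silently reassign parents through the chain convention; handling the boundary slots $m_{ij}\in\{1,k_i+1\}$ cleanly, and confirming that the reflected slot data genuinely corresponds to a branch tree satisfying the conditions of Lemma \ref{lem-NFprop}, is where the care is needed. Everything else reduces to the triviality that a reflection of $\{1,\dots,k_i+1\}$ is an involution.
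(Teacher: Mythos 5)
Your proposal is correct and follows essentially the same route as the paper's own (much terser) proof: the paper simply observes that $\mathrm{dual}$ does not alter the parent--child relationships among branches and that the claim then follows from the definitions of area and depth, which is exactly your central lemma (parent function preserved, slots reflected so that $s_{ij}\leftrightarrow e_{ij}$) followed by the recursion comparison. Your write-up just makes explicit the slot-reflection involution and the induction along the parent order that the paper leaves implicit.
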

\begin{proof}
Dual algorithm does not alter the parent-child relationship between any two branches of $T$. The statement follows directly from the definitions of the area and depth of $T$.
\end{proof}

\begin{cor}\label{cor-omega}
Given $\pi\in \mathcal{D}_{\mathcal{K}}$, $\omega(\pi):=\eta_{*}^{-1}\circ\delta^{-1}\circ(\delta\circ\eta_{*}(\pi))^{\mathrm{dual}}\in \mathcal{D}_{\mathcal{K}}$ is an involution interchanging area and depth, that is 
\begin{equation*}
\mathrm{area}(\pi)=\mathrm{depth}(\omega(\pi)) \ \text{ and }\ \mathrm{depth}(\pi)=\mathrm{area}(\omega(\pi)).    
\end{equation*}
Furthermore $\omega$ keeps the first part of $\vec{k}\in \mathcal{K}$ unchanged, that means the first step is always $S^{k_1}$. Consequently, $\omega$ restricts to an involution on $\mathcal{D}_{^a\mathcal{K}}$ that similarly interchanges area and depth.
\end{cor}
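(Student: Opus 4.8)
The plan is to assemble the statement entirely from the bijections and identities already established, treating $\omega$ as the composite displayed in the flowchart preceding this subsection. First I would check that $\omega$ is a well-defined map $\mathcal{D}_{\mathcal{K}} \to \mathcal{D}_{\mathcal{K}}$: writing $T := \delta(\eta_*(\pi))$, the bijectivity of $\eta_*$ together with Proposition \ref{prop-mapdelta} places $T \in \mathcal{LBT}_{\mathcal{K}}$; the Dual Algorithm of Definition \ref{dfn-dual} returns $T^{\mathrm{dual}} \in \mathcal{LBT}_{\mathcal{K}}$; and applying $\delta^{-1}$ followed by $\eta_*^{-1}$ lands back in $\mathcal{D}_{\mathcal{K}}$. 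Thus $\omega(\pi) = \eta_*^{-1}(\delta^{-1}(T^{\mathrm{dual}}))$ is legitimate, and its underlying composition lies in $\mathcal{K}$.

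For the involution property, the key observation is that $\delta(\eta_*(\omega(\pi))) = T^{\mathrm{dual}}$ by construction, since $\eta_*, \delta$ and $\eta_*^{-1}, \delta^{-1}$ are mutually inverse. Applying $\omega$ a second time therefore produces $\eta_*^{-1}(\delta^{-1}((T^{\mathrm{dual}})^{\mathrm{dual}}))$, and the preceding corollary gives $(T^{\mathrm{dual}})^{\mathrm{dual}} = T$, whence $\omega(\omega(\pi)) = \eta_*^{-1}(\delta^{-1}(T)) = \pi$.

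The exchange of statistics is then an immediate consequence of Proposition \ref{prop-areadepth} applied to both $T$ and $T^{\mathrm{dual}}$: it gives $\mathrm{area}(\pi) = \mathrm{area}(T)$, $\mathrm{depth}(\pi) = \mathrm{depth}(T)$, as well as $\mathrm{area}(\omega(\pi)) = \mathrm{area}(T^{\mathrm{dual}})$ and $\mathrm{depth}(\omega(\pi)) = \mathrm{depth}(T^{\mathrm{dual}})$. Substituting the identities $\mathrm{area}(T^{\mathrm{dual}}) = \mathrm{depth}(T)$ and $\mathrm{depth}(T^{\mathrm{dual}}) = \mathrm{area}(T)$ from the preceding corollary yields $\mathrm{area}(\omega(\pi)) = \mathrm{depth}(\pi)$ and $\mathrm{depth}(\omega(\pi)) = \mathrm{area}(\pi)$, as required.

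The only point needing genuine, if light, inspection is the preservation of the first part, and I expect this to be the main obstacle. Here I would argue that the Dual Algorithm fixes the initial branch: after the branch decomposition and reordering in step $(1)$, the branch carrying red label $1$ is always the initial branch $\tilde{B}_1$ (by the Labeling Algorithm $\mathcal{L}$, which assigns $1$ to the left side of $B_1$), and it has length $\ell(\tilde{B}_1) = k_1$. Since $\tilde{B}_1$ is the root branch it has no parent, so step $(2)$ leaves it untouched, and step $(3)$ relabels it again as the initial branch of length $k_1$. Under $\delta^{-1}$ this becomes the first column of $k_1 + 1$ cells with top entry $1$, and $\eta_*^{-1}$ then reads off a path whose first letter is $S^{k_1}$. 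Hence the first part of the underlying composition is unchanged. The care required is precisely in verifying that the dual fixes the initial branch itself, not merely its length; everything else is a direct chaining of the previously established bijections and the symmetry identities for $T^{\mathrm{dual}}$.
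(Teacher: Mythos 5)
Your proposal is correct and follows exactly the route the paper intends: the corollary is stated there without an explicit proof, as an immediate consequence of the bijectivity of $\eta_*$ and $\delta$, Proposition \ref{prop-areadepth}, and the preceding corollary giving $(T^{\mathrm{dual}})^{\mathrm{dual}}=T$ with $\mathrm{area}(T^{\mathrm{dual}})=\mathrm{depth}(T)$ and $\mathrm{depth}(T^{\mathrm{dual}})=\mathrm{area}(T)$. Your additional verification that the dual algorithm fixes the initial branch (hence the first letter $S^{k_1}$) supplies the one detail the paper leaves implicit, and it is argued correctly.
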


This completes the proof of our main result, Theorem \ref{thm-aK}. A natural consequence of Corollary \ref{cor-omega} is that this provides a new interpretation of the higher $q,t$-Catalan polynomials.

\begin{cor}
\begin{equation}
C_{n}^{(k)}(q,t)=\sum_{\pi\in \mathcal{D}_{k^n}}q^{\mathrm{depth(\pi)}}t^{\mathrm{dinv(\omega(\pi))}}.
\end{equation}
\end{cor}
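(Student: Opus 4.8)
The plan is to treat the right-hand side as a sum over $\mathcal{D}_{k^n}$, reindex it using the involution $\omega$, and then recognize the resulting sum as a variable-swapped copy of $C_n^{(k)}(q,t)$. The key preliminary observation is that when $\vec{k}=(k,k,\dots,k)$ every rearrangement of $\vec{k}$ coincides with $\vec{k}$ itself, so $\mathcal{K}=\{\vec{k}\}$ and $\mathcal{D}_{\mathcal{K}}=\mathcal{D}_{k^n}$. Consequently the map $\omega$ of Corollary~\ref{cor-omega} restricts to a genuine involution of $\mathcal{D}_{k^n}$ onto itself, interchanging $\mathrm{area}$ and $\mathrm{depth}$.

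First I would substitute $\pi \mapsto \omega(\pi)$ in the sum on the right. Since $\omega$ is a bijection of $\mathcal{D}_{k^n}$, this does not change the value of the sum, and since $\omega$ is an involution we have $\omega(\omega(\pi))=\pi$. Hence
\[
\sum_{\pi\in \mathcal{D}_{k^n}}q^{\mathrm{depth}(\pi)}t^{\mathrm{dinv}(\omega(\pi))}
=\sum_{\pi\in \mathcal{D}_{k^n}}q^{\mathrm{depth}(\omega(\pi))}t^{\mathrm{dinv}(\pi)}.
\]
Applying the identity $\mathrm{depth}(\omega(\pi))=\mathrm{area}(\pi)$ from Corollary~\ref{cor-omega} converts the exponent of $q$ back to the area statistic, yielding
\[
\sum_{\pi\in \mathcal{D}_{k^n}}q^{\mathrm{area}(\pi)}t^{\mathrm{dinv}(\pi)}.
\]

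To finish, I would compare this with the formula \eqref{equ-cnk}, namely $C_n^{(k)}(q,t)=\sum_{\pi\in\mathcal{D}_{k^n}}q^{\mathrm{dinv}(\pi)}t^{\mathrm{area}(\pi)}$. Swapping the roles of $q$ and $t$ there gives $C_n^{(k)}(t,q)=\sum_{\pi}q^{\mathrm{area}(\pi)}t^{\mathrm{dinv}(\pi)}$, so the expression obtained above is exactly $C_n^{(k)}(t,q)$. The last step is to invoke the $q,t$-symmetry of the higher $q,t$-Catalan polynomials, $C_n^{(k)}(q,t)=C_n^{(k)}(t,q)$, which holds as a consequence of the Rational Shuffle Theorem of Mellit; this identifies the sum with $C_n^{(k)}(q,t)$ and completes the argument.

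I do not anticipate a serious obstacle, since every ingredient is already in place. The only points that require care are the collapse $\mathcal{D}_{\mathcal{K}}=\mathcal{D}_{k^n}$ that makes $\omega$ a self-map, and the fact that the final identification is genuinely external to the paper's own construction: it rests on the $q,t$-symmetry of $C_n^{(k)}$ rather than on the involution $\omega$. It is worth emphasizing that $\omega$ alone establishes the symmetry of the pair $(\mathrm{area},\mathrm{depth})$ and not that of $(\mathrm{dinv},\mathrm{area})$, which is why the Rational Shuffle Theorem is still needed to close the loop and turn $C_n^{(k)}(t,q)$ back into $C_n^{(k)}(q,t)$.
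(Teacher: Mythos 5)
Your proposal is correct and takes essentially the same route as the paper's one-line proof: reindex the sum by the involution $\omega$, apply $\mathrm{depth}(\omega(\pi))=\mathrm{area}(\pi)$, and compare with Equation~(\ref{equ-cnk}). The only difference is that the paper's justification (``this follows from Equation~(\ref{equ-cnk}) and the fact that $\omega$ takes area to depth'') silently relies on the $q,t$-symmetry $C_n^{(k)}(q,t)=C_n^{(k)}(t,q)$ to resolve the $q\leftrightarrow t$ mismatch, whereas you correctly made that dependence on the Rational Shuffle Theorem explicit.
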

\begin{proof}
This follows from Equation (\ref{equ-cnk}) and the fact that $\omega$ takes area to depth.
\end{proof}

\begin{exa}
The Figure below illustrates the process from $\pi$ to $\omega(\pi)$.
The second row provides an example of the construction algorithm $\delta$, while the third row demonstrates its inverse. Let $T$ be $\delta(\eta_{*}(\pi))$. To obtain $a(T)=(0,3,1,2)$, we have $a_1=0$, $a_2=a_1+3=3$, $a_3=a_1+1=1$, and $a_4=a_3+1=2$. Similarly, we obtain $d(T)=(0,1,3,5)$, with $d_1=0$, $d_2=d_1+1=1$, $d_3=d_1+3=3$, and $d_4=d_3+2=5$. It is easy to check that $a(T^{\mathrm{dual}})=(0,3,5,1)$ and $d(T^{\mathrm{dual}})=(0,1,2,3)$. Therefore, $\mathrm{area}(\pi)=\mathrm{depth}(\omega(\pi))=6$, and $\mathrm{depth}(\pi)=\mathrm{area}(\omega(\pi))=9$.
\end{exa}

\begin{figure}[H]
    \centering
    \includegraphics[width=1\linewidth]{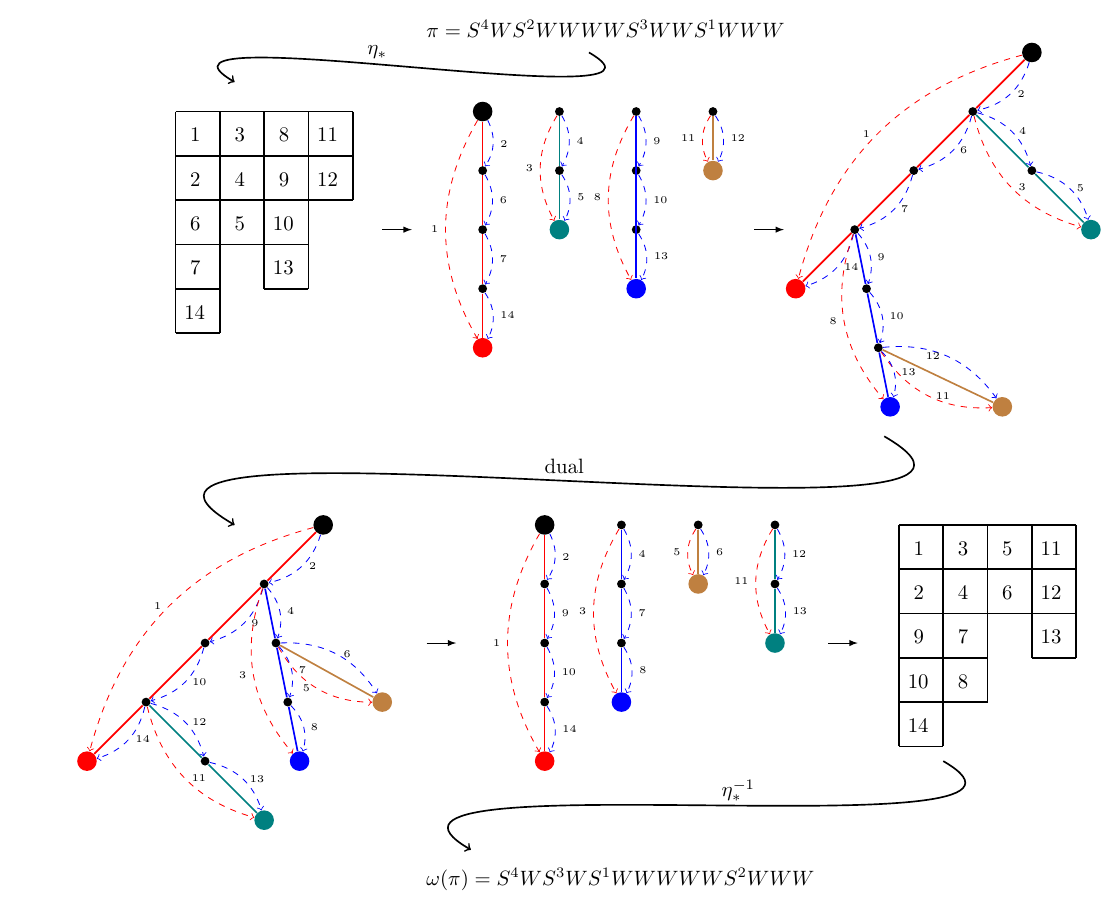}
    \caption{The construction of our involution $\omega$.}
    \label{fig:placeholder}
\end{figure}

\section{$q,t$-symmetry of $\widetilde{C}_{\vec{k}}(q,t)$}
Now, let us begin discussing the $q,t$-symmetry for singular $\mathcal{D}_{\vec{k}}$. Similar to the case of $C_{\vec{k}}(q,t)$, computational results indicate that $\widetilde{C}_{\vec{k}}(q,t)$ is generally not $q,t$-symmetric if $\ell(\vec{k})\geq 4$.

\begin{exa}
\begin{align*}
\widetilde{C}_{(1,1,2,1)}(q,t)-\widetilde{C}_{(1,1,2,1)}(t,q)&=-q^{4}t+qt^{4}+q^{3}t-qt^{3},\\
\widetilde{C}_{(1,2,1,1)}(q,t)-\widetilde{C}_{(1,2,1,1)}(t,q)&=q^{4}t-qt^{4}-q^{3}t+qt^{3}.
\end{align*}
\end{exa}

\subsection{$q,t$-symmetry of $\widetilde{C}_{\vec{k}}(q,t)$ for some special cases.}

\begin{cor}\label{cor-abb}
For any $\vec{k}=(a,b,\cdots,b,b)$ with positive integers $a$ and $b$, $\widetilde{C}_{\vec{k}}(q,t)$ is $q,t$-symmetric. In particular, $\widetilde{C}_{\vec{k}}(q,t)$ is $q,t$-symmetric if $\ell(\vec{k})=2$.
\end{cor}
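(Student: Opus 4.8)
The plan is to derive this entirely from Theorem \ref{thm-aK}, recognizing the corollary as the degenerate case of that theorem in which the set of rearrangements collapses to a single composition. The governing observation is that the $q,t$-symmetry in Theorem \ref{thm-aK} holds for \emph{any} $\vec{k}$, so we are free to choose $\vec{k}$ to be a constant vector; the set $\mathcal{K}$ of its rearrangements is then a singleton, and applying the $^a(\cdot)$ operation produces exactly the composition appearing in the statement of the corollary.

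Concretely, I would first set $\vec{k}' := (b, b, \dots, b)$ with $\ell(\vec{k})-1$ entries, so that the composition in the corollary is $\vec{k} = (a, b, \dots, b) = (a, \vec{k}')$. Since every part of $\vec{k}'$ equals $b$, its only rearrangement is itself, hence the associated rearrangement set is the singleton $\mathcal{K}' = \{\vec{k}'\}$. Applying the operation $^a(\cdot)$ from the definition $^a\mathcal{K}' = \{(a, j_1, \dots, j_{\ell-1}) : (j_1, \dots, j_{\ell-1}) \in \mathcal{K}'\}$ then yields $^a\mathcal{K}' = \{(a, b, \dots, b)\} = \{\vec{k}\}$. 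Consequently $\mathcal{D}_{^a\mathcal{K}'} = \mathcal{D}_{\vec{k}}$, and comparing the defining sums gives the identity of generating polynomials $\widetilde{C}_{^a\mathcal{K}'}(q,t) = \widetilde{C}_{\vec{k}}(q,t)$. Theorem \ref{thm-aK}, applied to $\vec{k}'$ and $a$, asserts that $\widetilde{C}_{^a\mathcal{K}'}(q,t)$ is $q,t$-symmetric, and the symmetry of $\widetilde{C}_{\vec{k}}(q,t)$ follows at once. For the ``in particular'' clause, when $\ell(\vec{k}) = 2$ any $\vec{k} = (a,b)$ already has the form $(a, b, \dots, b)$ with a length-one tail, so it is the special case $\vec{k}' = (b)$ of the argument just given.

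Since all the genuine combinatorial content, namely the involution $\omega$ built from the filling algorithm $\eta_*$, the tableau--tree bijection $\delta$, and the dual algorithm on $\mathcal{LBT}_{\mathcal{K}}$, has already been deployed to establish Theorem \ref{thm-aK}, there is no substantial obstacle remaining for this corollary; it is a formal specialization. The only point requiring care is the set-theoretic identity $^a\mathcal{K}' = \{\vec{k}\}$, which rests on the fact that a constant composition has a single rearrangement, and the verification that restricting the involution $\omega$ to the single fiber $\mathcal{D}_{\vec{k}} \subseteq \mathcal{D}_{^a\mathcal{K}'}$ is legitimate. This last point is guaranteed by the final assertion of Corollary \ref{cor-omega}, namely that $\omega$ keeps the first part of the composition fixed (the first step is always $S^{k_1}$); combined with the collapse of the tail to a unique arrangement, this ensures $\omega$ maps $\mathcal{D}_{\vec{k}}$ to itself, so the area--depth interchange takes place within $\mathcal{D}_{\vec{k}}$ alone and directly witnesses the $q,t$-symmetry of $\widetilde{C}_{\vec{k}}(q,t)$.
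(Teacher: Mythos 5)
Your proposal is correct and is essentially identical to the paper's own proof, which likewise takes $\mathcal{K}=\{(b,\cdots,b,b)\}$ (a singleton, since a constant vector has only one rearrangement) so that $\widetilde{C}_{^a\mathcal{K}}(q,t)=\widetilde{C}_{\vec{k}}(q,t)$, and then invokes Theorem \ref{thm-aK}. Your additional remarks about $\omega$ fixing the first step merely re-expose the mechanism behind Theorem \ref{thm-aK} and are not needed once that theorem is cited.
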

\begin{proof}
It suffices to take $\mathcal{K}=\{(b,\cdots,b,b)\}$, and then the results follows from Theorem \ref{thm-aK}.
\end{proof}

We obtain a similar conclusion to Theorem \ref{thm-bk1k2}, which was proven in \cite{beck2024polyhedral}.

\begin{prop}\label{prop-k1k2}
For any two vectors $\vec{k}_1=(a_1,a_2,\cdots,a_{\ell},b)$ and $\vec{k}_2=(a_1,a_2,\cdots,a_{\ell},c)$ with positive integers $a_1, \cdots,a_{\ell},b$ and $c$, we have $\widetilde{C}_{\vec{k}_1}(q,t)=\widetilde{C}_{\vec{k}_2}(q,t)$.
\end{prop}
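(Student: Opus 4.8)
The plan is to show that the last part of $\vec{k}$ has no influence on the joint distribution of $(\mathrm{area},\mathrm{depth})$, mirroring Proposition~\ref{thm-bk1k2} for $(\mathrm{area},\mathrm{bounce})$. The cleanest route is to work through the labeled branch tree model developed in Section~4, since the area and depth statistics are there read off from the combinatorial structure of the branches in a way that isolates the role of each part $k_i$. Concretely, I would set up a bijection $\Psi\colon\mathcal{D}_{\vec{k}_1}\to\mathcal{D}_{\vec{k}_2}$ and prove that it preserves both $\mathrm{area}$ and $\mathrm{depth}$; by Proposition~\ref{prop-areadepth} it is equivalent (and more convenient) to build this bijection on the tree side, as a map $\mathcal{LBT}_{\vec{k}_1}\to\mathcal{LBT}_{\vec{k}_2}$.

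First I would observe, via the map $\delta$ of Definition~\ref{dfn-tableaux2tree}, that a tree $T\in\mathcal{LBT}_{\vec{k}}$ is assembled from labeled path graphs $\tilde{B}_1,\dots,\tilde{B}_\ell$ with $\ell(\tilde{B}_i)=k_i$, where the last path graph $\tilde{B}_\ell$ corresponds to the $\ell$-th column of the filling tableau, i.e.\ to the final part $b$ (resp.\ $c$). The key structural fact to extract is that $\tilde{B}_\ell$, being the branch whose red label is largest, is a \emph{terminal} branch: no other branch is a child of it, so in the recursion defining $a(T)$ and $d(T)$ the branch $\tilde{B}_\ell$ contributes the single pair $(a_\ell,d_\ell)$ determined by its top vertex's position on its parent, and $k_\ell$ affects neither the lengths $e_{i\ell},s_{i\ell}$ attaching $\tilde{B}_\ell$ nor any $(a_j,d_j)$ for $j<\ell$. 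Thus the map that simply lengthens or shortens $\tilde{B}_\ell$ from $b$ edges to $c$ edges, leaving its top vertex attached at the same place of the same parent and leaving $\tilde{B}_1,\dots,\tilde{B}_{\ell-1}$ untouched, sends $\mathcal{LBT}_{\vec{k}_1}$ bijectively to $\mathcal{LBT}_{\vec{k}_2}$. I would then check that under this map $a_i$ and $d_i$ are unchanged for every $i$, since $a_\ell=a_{i_0}+e_{i_0\ell}$ and $d_\ell=d_{i_0}+s_{i_0\ell}$ depend only on the parent $\tilde{B}_{i_0}$ of $\tilde{B}_\ell$ and the attachment point, both of which are preserved. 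Relabeling via $\mathcal{L}$ reindexes the entries but leaves the area and depth sequences (as multisets, indeed coordinatewise on $\tilde{B}_1,\dots,\tilde{B}_{\ell-1}$ and on $\tilde{B}_\ell$) fixed, so $\mathrm{area}$ and $\mathrm{depth}$ are both preserved, and transporting back through $\delta^{-1}$ and $\eta_*^{-1}$ yields the claimed equality of polynomials.

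The main obstacle I anticipate is the verification that the last branch is genuinely terminal and attaches to a parent whose data are independent of $k_\ell$. One must argue carefully that the branch carrying the largest red label can never serve as the parent of another branch: this follows because a child branch has a strictly larger red label than its parent under the labeling algorithm $\mathcal{L}$, so the maximal-red-label branch has no children. A secondary subtlety is that changing $k_\ell$ changes the total number of vertices $|\vec{k}|+1$, so $\vec{k}_1$- and $\vec{k}_2$-trees live in $\mathcal{T}_{|\vec{k}_1|+1}$ and $\mathcal{T}_{|\vec{k}_2|+1}$ respectively; I would phrase the bijection purely in terms of the path-graph data $(\tilde{B}_1,\dots,\tilde{B}_{\ell-1},\tilde{B}_\ell)$ together with the parent-and-attachment information, for which only $\ell(\tilde{B}_\ell)$ varies, so the differing vertex counts cause no difficulty. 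Once the terminality of $\tilde{B}_\ell$ and the invariance of the attachment data are established, the preservation of $\mathrm{area}$ and $\mathrm{depth}$ is immediate from the recursive definitions in Section~4, and the proposition follows.
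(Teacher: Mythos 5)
Your proposal is correct, but it proves the invariance in a different model than the paper does. The paper's own proof is a two-line argument in the path/tableau picture: it invokes the bijection $\mathcal{D}_{\vec{k}_1}\to\mathcal{D}_{\vec{k}_2}$ of Beck et al.\ (the last north step $S^{b}$ is simply replaced by $S^{c}$, which is possible because no north step follows it) and asserts that depth, like area, is unaffected by the last part, ``directly derived from the definitions of the Filling and Ranking algorithms.'' You instead transport everything into the labeled-branch-tree model of Section~4 and build the bijection there, resting on the key lemma that under the labeling algorithm $\mathcal{L}$ a child branch always receives a strictly larger red label than its parent, so the branch $\tilde{B}_\ell$ with maximal red label is childless; lengthening or shortening this terminal branch while keeping its attachment point then visibly fixes every $a_i$ and $d_i$ in the recursions defining $a(T)$ and $d(T)$. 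The two arguments encode the same underlying bijection (change only the size of the last part), but yours is self-contained within the paper's own machinery and makes explicit the structural reason why depth cannot see $k_\ell$ --- precisely the point the paper leaves as an unproved assertion deferred to \cite{beck2024polyhedral} --- at the cost of being longer and requiring the checks you correctly flag: that the red-label order of the remaining branches is unchanged after relabeling, and that the differing vertex counts of $\mathcal{T}_{|\vec{k}_1|+1}$ and $\mathcal{T}_{|\vec{k}_2|+1}$ are harmless. Both proofs are valid; the paper's buys brevity via citation, yours buys a complete in-paper argument that also dovetails with Proposition~\ref{prop-areadepth}.
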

\begin{proof}
The proof closely follows that in \cite[Proposition 6.1, Corollary 6.2]{beck2024polyhedral}. To summarize, there is a bijection between $\mathcal{D}_{\vec{k}_1}$ and $\mathcal{D}_{\vec{k}_2}$, and depth statistic is also not affected by the last part of $\vec{k}$, which can be directly derived from the definitions of the Filling and Ranking algorithms.
\end{proof}

Thus, we can rephrase Corollary \ref{cor-abb} as follows:

\begin{cor}\label{cor-abbc}
For any $\vec{k}=(a,b,\cdots,b,c)$ with positive integers $a$, $b$, and $c$, $\widetilde{C}_{\vec{k}}(q,t)$ is $q,t$-symmetric. In particular, $\widetilde{C}_{\vec{k}}(q,t)$ is $q,t$-symmetric if $\ell(\vec{k})=3$.
\end{cor}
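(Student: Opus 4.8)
The plan is to deduce Corollary \ref{cor-abbc} directly from Corollary \ref{cor-abb} together with Proposition \ref{prop-k1k2}, treating the result as an essentially immediate combination of two facts that have already been established. The key observation is that $\vec{k}=(a,b,\dots,b,c)$ and $\vec{k}'=(a,b,\dots,b,b)$ differ only in their last entry, so Proposition \ref{prop-k1k2} applies with the common prefix $(a,b,\dots,b)$ and the two final parts $c$ and $b$.

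First I would invoke Proposition \ref{prop-k1k2} with $\vec{k}_1=(a,b,\dots,b,c)$ and $\vec{k}_2=(a,b,\dots,b,b)$ to conclude that
\begin{equation*}
\widetilde{C}_{(a,b,\dots,b,c)}(q,t)=\widetilde{C}_{(a,b,\dots,b,b)}(q,t).
\end{equation*}
This requires only that the two vectors share the same prefix and that the last entries be arbitrary positive integers, which is exactly the hypothesis of the proposition. Second I would apply Corollary \ref{cor-abb}, which asserts that $\widetilde{C}_{(a,b,\dots,b,b)}(q,t)$ is $q,t$-symmetric. Since the polynomial $\widetilde{C}_{(a,b,\dots,b,c)}(q,t)$ is literally equal to a polynomial already known to be $q,t$-symmetric, it is itself $q,t$-symmetric. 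The ``in particular'' clause for $\ell(\vec{k})=3$ follows because any vector of length $3$ has the form $(a,b,c)$, which is precisely the shape $(a,b,\dots,b,c)$ with a single middle entry $b$.

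I do not anticipate a genuine obstacle here, since both ingredients are quoted results from earlier in the excerpt; the only point requiring a moment of care is making sure the shapes match the hypotheses correctly, namely that the middle block of repeated $b$'s is nonempty (or handled as a degenerate case when $\ell(\vec{k})=2$) and that the prefix $(a,b,\dots,b)$ of $\vec{k}_1$ and $\vec{k}_2$ is genuinely identical before the differing final entry. The mild subtlety is purely bookkeeping: one must verify that Corollary \ref{cor-abb} covers the vector $(a,b,\dots,b,b)$ with the \emph{same} number of parts as $(a,b,\dots,b,c)$, which it does by construction. Given that, the proof is a two-line chain of equalities and needs no involution or generating-function argument of its own.
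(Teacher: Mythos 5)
Your proposal is correct and is exactly the paper's own argument: the paper introduces Proposition \ref{prop-k1k2} precisely so that Corollary \ref{cor-abbc} can be obtained as a ``rephrasing'' of Corollary \ref{cor-abb}, replacing the last part $c$ by $b$ without changing the polynomial. Your bookkeeping remarks (identical prefixes, the degenerate $\ell(\vec{k})=2$ case) are fine and do not alter the argument.
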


It is easy to see that $\omega$ action on $\mathcal{D}_{\vec{k}}$ with $\vec{k} = (a,b,\cdots,b,c)$ is not an involution.
However, we can obtain an involution on $\mathcal{D}_{\vec{k}}$ by $\omega$ through the following process:
Let $\vec{k'} = (a,b,\cdots,b,b)$,
$$\pi \in \mathcal{D}_{\vec{k}} \rightarrow \pi' \in \mathcal{D}_{\vec{k'}} \xrightarrow{\omega} \omega(\pi') \in \mathcal{D}_{\vec{k'}} \rightarrow \omega(\pi) \in \mathcal{D}_{\vec{k}}$$
where the Dyck path $\pi'$ (or $\omega(\pi')$ ) is the one-to-one correspondence with the Dyck path $\pi$ (or $\omega(\pi)$).
We will present such involution $\omega$, denoted by $\theta$, on $\mathcal{D}_{(a,b,c)}$ later. Additionally, we will also provide an algebraic proof using the method of MacMahon's Partition Analysis.

\subsection{Relationship between $C_{\vec{k}}(q,t)$ and $\widetilde{C}_{\vec{k}}(q,t)$.}

\begin{prop}\label{prop-k12}
If $l(\vec{k})=1$ or $2$, then for any $\pi\in \mathcal{D}_{\vec{k}}$, we have $\mathrm{depth}(\pi)=\mathrm{bounce}(\pi)$. In particular, we have
\begin{equation*}
\widetilde{C}_{\vec{k}}(q,t)={C}_{\vec{k}}(q,t).
\end{equation*}
\end{prop}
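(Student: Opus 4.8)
The plan is to show $\mathrm{depth}(\pi)=\mathrm{bounce}(\pi)$ for every $\pi\in\mathcal{D}_{\vec{k}}$ with $\ell(\vec{k})\leq 2$, from which the equality $\widetilde{C}_{\vec{k}}(q,t)=C_{\vec{k}}(q,t)$ follows immediately by summing $q^{\mathrm{area}(\pi)}t^{(\cdot)}$ over $\mathcal{D}_{\vec{k}}$. Both statistics are read off the first row of a ranking tableau, the only difference being that $\mathrm{bounce}$ uses the filling algorithm $\eta$ (smallest active) while $\mathrm{depth}$ uses $\eta_*$ (largest active). So the heart of the matter is to compare the two filling tableaux and argue that, in these small cases, the choice between smallest and largest active entry does not change the labels appearing in the first row.

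The case $\ell(\vec{k})=1$ is essentially vacuous: there is a single red arrow, hence a single column, $\mathcal{D}_{\vec{k}}$ has one element, and the first-row entry is $t_1=1$ in both tableaux, giving $b(\pi)=d(\pi)=(0)$. The substantive case is $\ell(\vec{k})=2$, where $\vec{k}=(k_1,k_2)$ and every $\pi$ produces a tableau with exactly two columns. First I would observe that the first-row entries $(t_1,t_2)$ of $\eta(\pi)$ and $\eta_*(\pi)$ agree, as noted in the excerpt just before Proposition 3.9: both algorithms place the $S^{*}$-steps at the same positions, so $t_1=1$ and $t_2$ is the index of the second north step in either tableau. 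Since there are only two columns, once the second column is opened, the ``smallest active'' and ``largest active'' distinction can only affect which of the two active bottom entries receives a subsequently placed $W$; I would verify that with just two columns this cannot change the value $t_2$ nor, via the ranking algorithm, the rank assigned to $t_2$. Concretely, $d_2$ (resp. $b_2$) equals the rank of $t_2$, which by Definition 2.10 is determined by the entry $t_2-1$ sitting immediately above the top of column $2$, and that entry's column and row are the same in both fillings because the first row is fixed and each column is forced to be increasing.

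The cleanest way to carry this out is to invoke Lemma 3.11 directly rather than re-deriving ranks by hand. That lemma gives $d_2=d_1+m_{12}-1=m_{12}-1$, where $m_{12}$ is the row of $t_2-1$ in $\eta_*(\pi)$; an entirely parallel statement for $\eta$ and the bounce sequence would give $b_2=m'_{12}-1$ with $m'_{12}$ the row of $t_2-1$ in $\eta(\pi)$. Thus it suffices to show $m_{12}=m'_{12}$, i.e.\ that $t_2-1$ lands in the same row in both fillings. With only two columns this is forced: before $t_2$ opens the second column, all labels $1,\dots,t_2-1$ lie in the first column in a single increasing run (there is nothing else to fill), so $t_2-1$ occupies row $t_2-1$ of column $1$ regardless of the active-selection rule, and $m_{12}=m'_{12}=t_2-1$. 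The main obstacle — and the point needing the most care — is precisely this claim that with two columns the pre-$t_2$ portion of both tableaux is identical; I would justify it by noting that until a second column exists there is only one active entry at each step, so the smallest/largest choice is never exercised, making the two algorithms literally coincide up to the placement of $t_2$. After that the second column is fully determined by its first entry $t_2$ together with the column-increasing condition and the known column length $k_2$, so the fillings agree completely, the first rows of the two ranking tableaux coincide, and $\mathrm{depth}(\pi)=\mathrm{bounce}(\pi)$.
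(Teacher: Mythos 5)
Your argument is correct and is essentially the paper's own proof: the paper encodes $\pi=S^a W^{\ell_1} S^b W^{a+b-\ell_1}$ and observes that both filling algorithms place $1,2,\ldots,\ell_1+1$ identically in the first column, so both ranking tableaux have first row $(0,\ell_1)$; your reduction via Lemma \ref{lem-anotherareadepth} (together with its verbatim analogue for $\eta$ and bounce, which is valid because $\gamma$ and $\gamma_*$ are the same procedure) rests on exactly the same observation, namely that until the second column is opened there is only one active entry, so the two algorithms coincide and $t_2-1$ sits in row $t_2-1$ of column $1$ in both tableaux. One caveat: your closing assertion that after $t_2$ is placed ``the second column is fully determined \ldots\ so the fillings agree completely'' is false in general. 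For $\pi=S^4WS^2WWWWW$ the second column of $\eta(\pi)$ is $(3,5,7)$ while that of $\eta_*(\pi)$ is $(3,4,5)$: once two columns are simultaneously active, the smallest/largest choices diverge. This does not damage your proof, since by your own reduction the first-row ranks depend only on the location of $t_2-1$, which is fixed before any divergence can occur; but that final sentence should be deleted, or weakened to the statement you actually need, namely that the first rows of the two ranking tableaux coincide.
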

\begin{proof}
When $\ell(\vec{k})=1$, the claim is immediate. In this case there is a unique $\vec{k}$-Dyck path, given by $\pi=S^{|\vec{k}|}WW\cdots W$, for which $\mathrm{depth}(\pi)=\mathrm{bounce}(\pi)=0$.

If $\ell(\vec{k})=2$, we consider the construction process of the filling tableaux $\eta(\pi)$ and $\eta_{*}(\pi)$. The path $\pi$ can be encoded as
\begin{equation*}
\pi=S^a \underbrace{WW\cdots W}_{\ell_1 \ times}S^b \underbrace{WW\cdots W}_{a+b-\ell_1 \ times},
\end{equation*}
for some $0\leq \ell_1\leq a$.
The first rows of $\eta(\pi)$ and $\eta_{*}(\pi)$ are both $(1,\ell_1+2)$. The sequence $2,\cdots,\ell_1+1$ appears directly below $1$ in the first column of both $\eta(\pi)$ and $\eta_{*}(\pi)$. Consequently, the first rows of $\gamma(\eta(\pi))$ and $\gamma_*(\eta_{*}(\pi))$ are both $(0,\ell_1)$. Therefore
\begin{equation*}
\mathrm{depth}(\pi)=\mathrm{bounce}(\pi)=\ell_1.
\end{equation*}
This concludes the proof.
\end{proof}

\begin{prop}\label{prop-a1c}
If $\vec{k}=(a,1,c)$ with positive integers  $a$ and $c$, then for any $\pi\in \mathcal{D}_{\vec{k}}$ we have $\mathrm{depth}(\pi)=\mathrm{bounce}(\pi)$ and hence
\begin{equation*}
\widetilde{C}_{\vec{k}}(q,t)={C}_{\vec{k}}(q,t).
\end{equation*}
\end{prop}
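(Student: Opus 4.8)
The plan is to prove that $\mathrm{depth}(\pi)=\mathrm{bounce}(\pi)$ for every $\pi\in\mathcal{D}_{(a,1,c)}$ by directly comparing the two filling tableaux $\eta(\pi)$ and $\eta_*(\pi)$ column by column, as was done for $\ell(\vec{k})=2$ in Proposition \ref{prop-k12}. Since $\ell(\vec{k})=3$, each of $\eta(\pi)$ and $\eta_*(\pi)$ has exactly three columns, of heights $a+1$, $2$, and $c+1$ respectively. The first row of both tableaux is the same triple $(t_1,t_2,t_3)=(1,t_2,t_3)$, determined only by the positions of the three $S^*$ letters in the SW-word, so $t(\eta(\pi))=t(\eta_*(\pi))$. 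Because the ranking algorithm $\gamma$ (equivalently $\gamma_*$) reads off the first-row ranks entirely from the row-positions of the entries $t_j-1$ via Lemma \ref{lem-anotherareadepth}, it suffices to show that $t_j-1$ lands in the same row in both $\eta(\pi)$ and $\eta_*(\pi)$ for $j=2,3$.

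First I would dispose of the second column: it has height $2$, holding a single active entry after its top entry $t_2$ is placed. Since there is only one entry below the top in a height-$2$ column, the choice between ``smallest active'' and ``largest active'' is irrelevant for what gets placed there, and the entry $t_3-1$ (whichever controls column $3$) occupies the same row regardless. The genuine content is the interaction of the tall first column (height $a+1$) with the middle column of height $2$, which is exactly where the smallest-versus-largest distinction could bite. Here I would argue that because $k_2=1$, the middle column can absorb at most one $W$-step below its top, so whenever both a first-column active entry and the middle-column top entry are simultaneously available, the subsequent $W$ is forced into a position whose \emph{row index} is the same under both algorithms, even though the actual labels differ. Concretely, using Lemma \ref{lem-NFprop} and its $\mathcal{F}_{\vec{k}}$ analogue, I would track the row index $m_{ij}$ of each $t_j-1$ and show it is algorithm-independent when the only intermediate column has height $2$.

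The cleanest route is to invoke Lemma \ref{lem-anotherareadepth}, which gives $d_j=d_i+m_{ij}-1$ for depth, together with the parallel formula $b_j=b_i+\,(\text{row-offset})$ for bounce coming from $\gamma$ on $\eta(\pi)$; both depend only on the row indices $m_{ij}$. So the whole proposition reduces to the combinatorial claim that the row index of $t_j-1$ coincides in $\eta(\pi)$ and $\eta_*(\pi)$ for each $j$, and then summing gives $\mathrm{depth}(\pi)=\mathrm{bounce}(\pi)$ and hence $\widetilde{C}_{\vec{k}}(q,t)=C_{\vec{k}}(q,t)$ term by term on $\mathcal{D}_{\vec{k}}$.

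The main obstacle I anticipate is the first-column/middle-column interaction in the regime where the middle $S^1$ step appears while the first column still has several active entries yet to be filled by $W$-steps; there the two algorithms place genuinely different labels, and one must verify that the resulting \emph{shape of descent}---i.e.\ which entries end up where in rows, not which labels---is forced to agree. I would handle this by a short case analysis on the position $t_2$ of the middle step relative to how many $W$-steps have already descended the first column, checking the three regimes (middle step before, during, or after the first column is saturated) and confirming the row index of $t_3-1$ matches in each. Once this local check is complete, the rest is bookkeeping via Lemma \ref{lem-anotherareadepth}.
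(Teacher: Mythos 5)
Your overall strategy---encode $\pi$ by the gap lengths $\ell_1,\ell_2$ and compare $\eta(\pi)$ with $\eta_*(\pi)$ case by case---is the same as the paper's, but the combinatorial claim you reduce everything to is false, and the reduction itself is too weak. You assert that it suffices to show $t_j-1$ lands in the same \emph{row} of both tableaux, and that this holds because in a height-$2$ middle column ``the choice between smallest active and largest active is irrelevant for what gets placed there.'' Both statements break in the regime $1\leq \ell_1<a$, $\ell_2=1$, i.e.\ $\pi=S^a\,W^{\ell_1}\,S^1\,W\,S^c\,W\cdots W$. Under $\eta$ (smallest active), the $W$ following $S^1$ becomes the entry $\ell_1+3$ placed in column $1$ below $\ell_1+1$, and column $2$ later receives $\ell_1+5$ as its second entry; under $\eta_*$ (largest active), the same $W$ becomes $\ell_1+3$ placed in column $2$. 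Consequently $t_3-1=\ell_1+3$ sits in column $1$, row $\ell_1+2$ in $\eta(\pi)$, but in column $2$, row $2$ in $\eta_*(\pi)$: different rows, different columns, and different fillings of the middle column. What is actually true---and what the proposition rests on---is that the \emph{ranks} coincide: $0+(\ell_1+2)-1=\ell_1+2-1=\ell_1+1$, because the discrepancy in row index is exactly compensated by the discrepancy in the base rank of the column containing $t_3-1$. Your reduction erases this compensation: by Lemma \ref{lem-anotherareadepth} the contribution is $d_i+m_{ij}-1$, so agreement of the row indices $m_{ij}$ alone is neither sufficient (one also needs the ranks $d_i$ of the relevant column tops to agree) nor, as the example shows, true.

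The paper's proof avoids this trap by never claiming positional agreement of individual entries: it writes out the full first columns of $\eta(\pi)$ and $\eta_*(\pi)$ in each regime ($\ell_2>1$, $\ell_2=1$, $\ell_1=0$, etc.) and computes the first rows of both ranking tableaux directly, finding them equal to $(0,\ell_1,\ell_1+\ell_2-1)$, or the appropriate degenerate values, in every case. To repair your argument, replace your claim by: ``the rank of $t_j-1$, namely the rank of the top of its column plus its row index minus one, is the same in both tableaux,'' and verify this in the same case analysis---at which point you have essentially reproduced the paper's computation.
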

\begin{proof}
The idea is similar to the previous cases. We may encode $\pi$ as
\begin{equation*}
\pi=S^a \underbrace{WW\cdots W}_{\ell_1 \ times}S^1\underbrace{WW\cdots W}_{\ell_2 \ times}S^c \underbrace{WW\cdots W}_{a+1+c-\ell_1-\ell_2 \ times},
\end{equation*}
for some $0\leq \ell_1\leq a$ and $0\leq \ell_2$ satisfying the relation $\ell_1+\ell_2\leq a+1$.

For $\ell_1\geq 1$ and $\ell_2>1$, the first rows of $\eta(\pi)$ and $\eta_*(\pi)$ are both $(1,\ell_1+2,\ell_1+\ell_2+3)$. The first column of $\eta(\pi)$ is $(1,\cdots,\ell_1+1,\ell_1+3,\ell_1+5,\cdots,\ell_1+\ell_2+2,\cdots)$ while the first column of $\eta_*(\pi)$ is $(1,2,\cdots,\ell_1+1,\ell_1+4,\ell_1+5,\cdots,\ell_1+\ell_2+2,\cdots)$. Thus the first rows of $\gamma(\eta(\pi))$ and $\gamma_*(\eta_*(\pi))$ are both $(0,\ell_1,\ell_1+\ell_2-1)$. Consequently,
\begin{equation*}
\mathrm{depth}(\pi)=\mathrm{bounce}(\pi)=\ell_1+\ell_1+\ell_2-1=2\ell_1+\ell_2-1.
\end{equation*}

For $\ell_1\geq 1$ and $\ell_2=1$, we have $$\mathrm{depth}(\pi)=\mathrm{bounce}(\pi)=2\ell_1+1.$$

Similarly, we have $\mathrm{depth}(\pi)=\mathrm{bounce}(\pi)=0$ for $\ell_1=\ell_2=0$; $\mathrm{depth}(\pi)=\mathrm{bounce}(\pi)=1$ for $\ell_1=0$ and $\ell_2= 1$; $\mathrm{depth}(\pi)=\mathrm{bounce}(\pi)=\ell_2-1$ for $\ell_1=0$ and $\ell_2>1$; $\mathrm{depth}(\pi)=\mathrm{bounce}(\pi)=2\ell_1$ for $\ell_1\geq 1$ and $\ell_2=0$.

This completes the proof.
\end{proof}

\subsection{$q,t$-symmetry of $\widetilde{C}_{\vec{k}}(q,t)$ for $\vec{k}=(a,b,c)$}.

We describe two additional methods that yield the $q,t$-symmetry of $\widetilde{C}_{(a,b,c)}(q,t)$.

\subsubsection{Proof via an explicit involution}
We aim to construct a direct involution on $\mathcal{D}_{(a,b,c)}$ to reveal its $q,t$-symmetry, as our $\omega$ in Section 4 might map $\pi\in \mathcal{D}_{(a,b,c)}$ to $\omega(\pi)\in \mathcal{D}_{(a,c,b)}$.

\begin{prop}
Let $\pi \in \mathcal{D}_{(a,b,c)}$.Then each $\pi$ can be encoded as
\begin{equation*}
\pi=S^{a}\underbrace{WW\cdots W}_{\ell_1\ times}S^{b}\underbrace{WW\cdots W}_{\ell_2\ times}S^{c}\underbrace{WW\cdots W}_{a+b+c-\ell_1-\ell_2\ times},
\end{equation*}
for some integers $l_1,l_2$ satisfying $0\leq \ell_1 \leq a$, $0\leq \ell_2$, and $\ell_1 + \ell_2 \leq a+b$.

Define the map $\theta$ on $\mathcal{D}_{(a,b,c)}$ by
\begin{equation*}
\theta(\pi)=
\begin{cases}
S^{a}\underbrace{WW\cdots W}_{a-\ell_1\ times}S^{b}\underbrace{WW\cdots W}_{b-\ell_2\ times}S^{c}\underbrace{WW\cdots W}_{c+\ell_1+\ell_2\ times}, & \text{if } \ell_2\leq b,\\
S^{a}\underbrace{WW\cdots W}_{a+b-\ell_1-\ell_2\ times}S^{b}\underbrace{WW\cdots W}_{\ell_2\ times}S^{c}\underbrace{WW\cdots W}_{c+\ell_1\ times}, & \text{otherwise.}
\end{cases}
\end{equation*}
Then $\theta$ is an involution on $\mathcal{D}_{(a,b,c)}$ such that
\begin{equation*}
\mathrm{area}(\pi)=\mathrm{depth}(\theta(\pi)), \ \mathrm{depth}(\pi)=\mathrm{area}(\theta(\pi)).
\end{equation*}
\end{prop}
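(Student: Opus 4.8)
The plan is to verify directly that the piecewise map $\theta$ is a well-defined involution on $\mathcal{D}_{(a,b,c)}$ and that it swaps area and depth, by computing both statistics explicitly as functions of the two parameters $\ell_1,\ell_2$. Since a path in $\mathcal{D}_{(a,b,c)}$ is completely determined by the pair $(\ell_1,\ell_2)$ subject to $0\le \ell_1\le a$, $0\le \ell_2$, and $\ell_1+\ell_2\le a+b$, the whole proposition reduces to a finite bookkeeping problem in these two integer variables. I would organize everything around the formulas for area and depth in terms of $(\ell_1,\ell_2)$, so the first task is to pin those down.

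\textbf{Step 1: Compute area and depth as functions of $(\ell_1,\ell_2)$.} The area sequence is read straight off the starting ranks of the three south steps: $a_1=0$, $a_2=a-\ell_1$, and $a_3=a+b-\ell_1-\ell_2$, so $\mathrm{area}(\pi)=(a-\ell_1)+(a+b-\ell_1-\ell_2)=2a+b-2\ell_1-\ell_2$. For depth I would run the filling algorithm $\eta_*$ and the ranking algorithm $\gamma_*$ exactly as in Propositions~\ref{prop-k12} and~\ref{prop-a1c}, producing the depth labeling sequence $(d_1,d_2,d_3)$ of the first row of $\gamma_*(\eta_*(\pi))$. As in those earlier computations, the answer splits according to whether the second south step's $W$-run $\ell_2$ exceeds the length $b$ of that step — precisely the case division appearing in the definition of $\theta$. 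I expect $d_2=\ell_1$ in all cases (the first column just stacks the first $\ell_1$ west steps below the $1$), while $d_3$ depends on whether the block $a+2,\dots$ placed by $\eta_*$ lands inside or below the second column, giving $d_3=\ell_1+\ell_2$ when $\ell_2\le b$ and a correspondingly different value when $\ell_2>b$.

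\textbf{Step 2: Check that $\theta$ lands in $\mathcal{D}_{(a,b,c)}$ and is an involution.} For each branch I would read off the new parameters $(\ell_1',\ell_2')$ of $\theta(\pi)$ from its $SW$-word and verify the constraints $0\le\ell_1'\le a$, $\ell_1'+\ell_2'\le a+b$, so that $\theta(\pi)$ is a genuine path. In the first case $\ell_2\le b$ the image has $(\ell_1',\ell_2')=(a-\ell_1,\,b-\ell_2)$, whose second coordinate satisfies $\ell_2'=b-\ell_2\le b$, so $\theta$ maps this case to itself and applying $\theta$ again returns $(\ell_1,\ell_2)$. In the second case $\ell_2>b$ the image has $\ell_1'=a+b-\ell_1-\ell_2$ and $\ell_2'=\ell_2$; here I must confirm that this image falls into the ``otherwise'' branch (i.e.\ that its own second parameter still exceeds $b$, using $\ell_2>b$) so that a second application of $\theta$ inverts the transformation. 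The verification $\theta\circ\theta=\mathrm{id}$ is then a short substitution in each branch.

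\textbf{Step 3: Confirm the statistic swap.} Using the explicit formulas from Step~1 together with the parameter substitutions from Step~2, I would check in each of the two cases that $\mathrm{area}(\theta(\pi))=\mathrm{depth}(\pi)$ and $\mathrm{depth}(\theta(\pi))=\mathrm{area}(\pi)$; since $\theta$ is an involution, the two identities are equivalent and only one needs to be checked. \textbf{The main obstacle} I anticipate is Step~1, namely getting the depth formula for the regime $\ell_2>b$ correct: when the second $W$-run overshoots the height $b$ of the middle north step, the largest-active rule of $\eta_*$ routes the overflow west steps under a different column than in the $\ell_2\le b$ regime, and one must track carefully which entries become active and in what order so that the ranking $\gamma_*$ assigns the right value to $d_3$. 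Once the two depth formulas are established, the involution and swap properties are routine algebra in $(\ell_1,\ell_2)$, so essentially all the real content lies in correctly executing the filling and ranking algorithms on the two-parameter family of paths.
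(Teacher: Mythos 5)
Your proposal is correct and follows essentially the same route as the paper's proof: parametrize each path by $(\ell_1,\ell_2)$, compute $\mathrm{area}(\pi)=2a+b-2\ell_1-\ell_2$ directly, compute depth by running $\eta_*$ and $\gamma_*$ with the case split $\ell_2\le b$ versus $\ell_2>b$, and verify the involution and the statistic swap by parameter substitution in each branch. The one computation you deferred comes out exactly as you predicted: for $\ell_2>b$ the overflow west steps are routed below the first column by the largest-active rule, giving depth labeling sequence $(0,\ell_1,\ell_1+\ell_2-b)$ and $\mathrm{depth}(\pi)=2\ell_1+\ell_2-b$, after which the algebra closes as in your Step 3.
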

\begin{proof}
It is straightforward to verify that $\theta$ is well-defined and that $\theta^2(\pi) = \pi$. Next, we analyze the contents of the filling tableaux and the ranking tableaux, and proceed by considering cases.

First, note that the area of $\pi$ is fixed:
\begin{equation*}
\mathrm{area}(\pi)=0+a-\ell_1+a+b-\ell_1-\ell_2=2a+b-2\ell_1-\ell_2.
\end{equation*}

If $\ell_2 \leq b$, then the first row of $\eta(\pi)$ is
$$(1,\, \ell_1+2,\, \ell_1+\ell_2+3).$$
When $1\leq \ell_1 \leq a$ and $1\leq \ell_2 \leq b$, the entries are arranged as follows:
$2,\dots,\ell_1+1$ are placed directly below $1$ in the first column;
$\ell_1+3,\dots,\ell_1+\ell_2+2$ are placed directly below $l_1+2$ in the second column;
$\ell_1+\ell_2+4,\dots,\ell_1+\ell_2+3+c$ are placed directly below $\ell_1+\ell_2+3$ in the third column.
The remaining $b-\ell_2$ entries are filled in the rest of the second column from top to bottom, and the last $a-l_1$ entries are filled in the rest of the first column from top to bottom.
Thus the first row of the ranking tableau $\gamma_*(\eta_*(\pi))$ is $(0,\ell_1,\ell_1+\ell_2)$. Hence
\begin{equation}\label{equ-depthabc}
\mathrm{depth}(\pi)=0+\ell_1+\ell_1+\ell_2=2\ell_1+\ell_2.
\end{equation}
Moreover, we have the following cases for $\mathrm{depth}(\pi)$: $\mathrm{depth}(\pi)=0$ when $\ell_1=\ell_2=0$, $\mathrm{depth}(\pi)=\ell_2$ when $\ell_1=0$ and $\ell_2\geq 1$, $\mathrm{depth}(\pi)=2\ell_1$ when $\ell_1\geq 1$ and $\ell_2=0$. These cases are consistent with Equation \eqref{equ-depthabc}.

Similarly, we compute the area and depth of $\theta(\pi)$. We have
\begin{equation*}
\mathrm{area}(\theta(\pi))=0+a-(a-\ell_1)+a+b-(a-\ell_1+b-\ell_2)=2\ell_1+\ell_2.
\end{equation*}
Moreover, the first row of $\eta_*(\theta(\pi))$ is $(1,a-\ell_1+2,a-\ell_1+b-\ell_2+3)$ and so the first row of $\gamma_*(\eta_*(\theta(\pi)))$ is $(0,a-\ell_1,a-\ell_1+b-\ell_2)$. Therefore, performing the same analysis yields
\begin{equation*}
\mathrm{depth}(\theta(\pi))=0+a-\ell_1+a-\ell_1+b-\ell_2=2a+b-2\ell_1-\ell_2.
\end{equation*}
This concludes the analysis of the case $\ell_2\leq b$.

Otherwise, we have
\begin{equation*}
\mathrm{area}(\theta(\pi))=0+a-(a+b-\ell_1-\ell_2)+a+b-(a+b-\ell_1-\ell_2+\ell_2)=2\ell_1+\ell_2-b.
\end{equation*}
To compute $\mathrm{depth}(\pi)$, note that the first row of $\eta_*(\pi)$ is $(1,\ell_1+2,\ell_1+\ell_2+3)$. Since $\ell_1 \leq a$ and $\ell_2 > b$, the entries are arranged as follows: $2,\dots,\ell_1+1$ are placed directly below $1$ in the first column;
$\ell_1+3,\dots,\ell_1+2+b$ are placed directly below $\ell_1+2$ in the second column;
$\ell_1+3+b,\dots,\ell_1+\ell_2+2$ are then placed below $\ell_1+1$ in the first column;
$\ell_1+\ell_2+4,\dots,\ell_1+\ell_2+3+c$ are placed directly below $\ell_1+\ell_2+3$ in the third column;
finally, the remaining entries are filled in the rest of the first column from top to bottom.
Thus, the first row of the ranking tableau $\gamma_*(\eta_*(\pi))$ is $(0,\ell_1,\ell_1+\ell_2-b)$ and hence
\begin{equation}
\mathrm{depth}(\pi)=0+\ell_1+\ell_1+\ell_2-b=2\ell_1+\ell_2-b.
\end{equation}
Similarly, the first row of the ranking tableau $\gamma_*(\eta(\eta_*(\pi)))$ is $(0,a-(\ell_1+\ell_2-b),a-(\ell_1+\ell_2-b)+\ell_2-b)$ and therefore
\begin{equation*}
\mathrm{depth}(\theta(\pi))=0+a-(\ell_1+\ell_2-b)+a-(\ell_1+\ell_2-b)+\ell_2-b=2a+b-2\ell_1-\ell_2.
\end{equation*}
This concludes the analysis of the case $\ell_2> b$.

Hence, the proof is complete.
\end{proof}

\subsubsection{Proof via MacMahon's Partition Analysis}

In our analysis of the previous proposition, we obtained explicit formulas for $\mathrm{area}(\pi)$ and $\mathrm{depth}(\pi)$, where
\begin{equation*}
\pi=S^{a}\underbrace{WW\cdots W}_{\ell_1\ times}S^{b}\underbrace{WW\cdots W}_{\ell_2\ times}S^{c}\underbrace{WW\cdots W}_{a+b+c-\ell_1-\ell_2\ times},
\end{equation*}
for some $0\leq \ell_1 \leq a$ and $0\leq l_2$ satisfying $\ell_1+\ell_2 \leq a+b$.
These are given by
\begin{align*}
\mathrm{area}(\pi)&=2a+b-2\ell_1-\ell_2,\\
\mathrm{depth}(\pi)&=
\begin{cases}
2\ell_1+\ell_2, & \text{if } \ell_2\leq b,\\
2\ell_1+\ell_2-b, & \text{otherwise.}
\end{cases}
\end{align*}

Consequently, it is natural to analyze the generating function within the framework of MacMahon’s partition analysis \cite{andrews2001macmahon}. Define

\begin{equation*}
\underset{\geq}{\Omega}\sum_{i_1=-\infty}^{\infty}\cdots\sum_{i_r=-\infty}^{\infty}A_{i_1,\cdots,i_r}\lambda_1^{i_1}\lambda_{2}^{i_2}\cdots \lambda_{r}^{i_r}:=\sum_{i_1=0}^{\infty}\cdots\sum_{i_r=0}^{\infty}A_{i_1,\cdots,i_r}.
\end{equation*}

It means that $\Omega_{\geq}$ operator extracts all terms with nonnegative power in $\lambda_1$, $\lambda_2$, $\cdots$, $\lambda_r$ and sets them to be equal to 1. We have the following crude generating function.

\begin{align}\label{equ-gf}
F(x_1,x_2,x_3,y_1,y_2,q,t)=\sum_{a,b,c\geq 0}x_1^{a}x_{2}^{b}x_{3}^{c}\sum_{0\leq \ell_1\leq a, 0\leq \ell_2, 0\leq \ell_1+\ell_2\leq a+b}q^{2a+b-2l_1-l_2}t^{depth(\pi)}y_1^{\ell_1}y_2^{\ell_2}.
\end{align}

It is sufficient to prove the $q,t$-symmetry of
\begin{align*}
F(x_1,x_2,x_3,1,1,q,t)=\sum_{a,b,c\geq 0}x_1^{a}x_2^bx_3^c\widetilde{C}_{(a,b,c)}(q,t).
\end{align*}
We divide Equation (\ref{equ-gf}) into two parts for further analysis.
\begin{align}
F_1(x_1,x_2,x_3,y_1,y_2,q,t):&=\sum_{a,b,c\geq 0}x_1^{a}x_{2}^{b}x_{3}^{c}\sum_{0\leq \ell_1\leq a, 0\leq \ell_1+\ell_2\leq a+b,0\leq \ell_2\leq b}q^{2a+b-2\ell_1-\ell_2}t^{2\ell_1+\ell_2}y_1^{\ell_1}y_2^{\ell_2},\\
F_2(x_1,x_2,x_3,y_1,y_2,q,t):&=\sum_{a,b,c\geq 0}x_1^{a}x_{2}^{b}x_{3}^{c}\sum_{0\leq \ell_1\leq a, 0\leq \ell_1+\ell_2\leq a+b,\ell_2> b}q^{2a+b-2\ell_1-\ell_2}t^{2\ell_1+\ell_2-b}y_1^{\ell_1}y_2^{\ell_2}.
\end{align}

We use the Maple package Ell by Xin \cite{xin2004fast} for the calculations, and the results yield

\begin{align*}
F_1&=\sum_{a,b,c,\ell_1,\ell_2\geq 0}x_1^{a}x_{2}^{b}x_{3}^{c}\sum_{a-\ell_1\geq 0, a+b-\ell_1-\ell_2\geq 0, b-\ell_2\geq 0}q^{2a+b-2\ell_1-\ell_2}t^{2\ell_1+\ell_2}y_1^{\ell_1}y_2^{\ell_2}\\
&=\underset{\geq}{\Omega} \sum_{a,b,c,\ell_1,\ell_2\geq 0}x_1^{a}x_{2}^{b}x_{3}^{c}q^{2a+b-2\ell_1-\ell_2}t^{2\ell_1+\ell_2}y_1^{\ell_1}y_2^{\ell_2}\lambda_{1}^{a-\ell_1}\lambda_{2}^{a+b-\ell_1-\ell_2}\lambda_{3}^{b-\ell_2}\\
&=\underset{\geq}{\Omega} \frac{1}{(1-q^{2}x_1\lambda_1\lambda_2)(1-qx_2\lambda_2\lambda_3)(1-x_3)(1-\frac{t^2y_1}{q^2\lambda_1\lambda_2})(1-\frac{ty_2}{q\lambda_2\lambda_3})}\\
&=\frac{1}{(1-q^{2}x_1)(1-qx_2)(1-x_3)(1-t^{2}x_1y_1)(1-tx_2y_2)}.
\end{align*}

\begin{align*}
F_2&=\sum_{a,b,c,\ell_1,\ell_2\geq 0}x_1^{a}x_{2}^{b}x_{3}^{c}\sum_{a-\ell_1\geq 0, a+b-\ell_1-\ell_2\geq 0, \ell_2-b-1\geq 0}q^{2a+b-2\ell_1-\ell_2}t^{2\ell_1+\ell_2-b}y_1^{\ell_1}y_2^{\ell_2}\\
&=\underset{\geq}{\Omega} \sum_{a,b,c,\ell_1,\ell_2\geq 0}x_1^{a}x_{2}^{b}x_{3}^{c}q^{2a+b-2\ell_1-\ell_2}t^{2\ell_1+\ell_2-b}y_1^{\ell_1}y_2^{\ell_2}\lambda_{1}^{a-\ell_1}\lambda_{2}^{a+b-\ell_1-\ell_2}\lambda_{3}^{\ell_2-b-1}\\
&=\underset{\geq}{\Omega} \frac{\lambda_3^{-1}}{(1-q^{2}x_1\lambda_1\lambda_2)(1-\frac{qx_2\lambda_2}{t\lambda_3})(1-x_3)(1-\frac{t^2y_1}{q^2\lambda_1\lambda_2})(1-\frac{ty_2\lambda_3}{q\lambda_2})}\\
&=\frac{qtx_1y_2}{(1-q^{2}x_1)(1-x_2y_2)(1-x_3)(1-t^{2}x_1y_1)(1-qtx_1y_2)}.
\end{align*}

Therefore, by substituting
$y_1$ and $y_2$ with $1$ in the expressions above and combining the results, we obtain
\begin{align*}
F(x_1,x_2,x_3,1,1,q,t)
&=F_1(x_1,x_2,x_3,1,1,q,t)+F_2(x_1,x_2,x_3,1,1,q,t)\\
&=\frac{1-x_2+qtx_1x_2-q^2tx_1x_2-qt^2x_1x_2+q^2t^2x_1x_2^2}{(1-q^2x_1)(1-t^2x_1)(1-qtx_1)(1-x_2)(1-qx_2)(1-tx_2)(1-x_3)}.
\end{align*}
The $q,t$-symmetry clearly follows from this.

\section{Conclusion and future directions}

In this article, we establish the $q,t$-symmetry of $\widetilde{C}_{\mathcal{K}}(q,t)$, which is the $q,t$-polynomial graded by the pair of statistics (area,depth) on $\mathcal{K}$-Dyck paths. Our proof relies on constructing an involution on $\mathcal{K}$-Dyck paths, which swaps the area and depth of a path. However, this involution cannot be used to prove a similar result for $C_{\mathcal{K}}(q,t)$, as it is not generally $q,t$-symmetric. Additionally, we analyze the $q,t$-symmetry in the refined case for certain singular $\vec{k}$. The dinv of a $\vec{k}$-Dyck path $\pi$ is not defined using the area sequence of $\pi$, and thus, we do not identify a direct generalization of ddinv for $\vec{k}$-Dyck paths via the depth labeling sequence. However, using the $\mathrm{depth}(\pi)$ and $\mathrm{dinv}(\omega(\pi))$ statistics, can also provide an alternative description of the higher $q,t$-Catalan polynomials.

Similar to the development of the $q,t$-symmetry of $C_{\vec{k}}(q,t)$, in \cite{beck2024polyhedral}, some results from \cite{xin2025q} are reproven using a different perspective. Therefore, one possible direction for further research is:

\begin{prob}
Explore the $q,t$-symmetry of $\widetilde{C}_{\vec{k}}(q,t)$ and $\widetilde{C}_{\mathcal{K}}(q,t)$ using techniques from polyhedral geometry.
\end{prob}

Despite the pair of statistics discussed above, one might wonder if there exist other pairs of statistics exhibit $q,t$-symmetry on $\vec{k}$-Dyck paths or $\mathcal{K}$-Dyck paths. The answer appears to be obvious. We present two types of new $q,t$-polynomials here.

In \cite{li2023symmetry}, the authors demonstrated that the pair (run, ret) constitutes a $q,t$-symmetric pair of statistics on \textbf{classical Dyck paths of composition type $\alpha$}, meaning that for each $\pi\in \mathcal{D}_{n}$, the lengths of successive North-step runs are determined by the composition $\alpha\vDash n$ in left-to-right order. In the context of $\vec{k}$-Dyck paths, \textbf{run} represents the sum of the lengths of all $S^*$ segments occurring before the first $WW$ in $\pi$, while \textbf{ret} counts the number of times the path, excluding $(0,0)$, intersects the horizontal axis. The distinction between $\mathcal{D}_{n}$ of composition type $\vec{k}$ and $\vec{k}$-Dyck paths lies in the fact that, for $\vec{k}$-Dyck paths, successive North-step runs may occur immediately above one another. Consequently, the $q,t$-polynomial associated with $\vec{k}$-Dyck paths, when graded by the pair (run, ret), can be interpreted as a summation over appropriate families of classical Dyck paths of composition type. The $q,t$-symmetry of these polynomials follows directly from the result in \cite{li2023symmetry}.

To summarize, for any pair of statistics $(\mathrm{stat1},\mathrm{stat2})$, if it is $q,t$-symmetric on classical Dyck paths of composition type $\alpha$, then it is also $q,t$-symmetric on $\vec{k}$-Dyck paths and $\mathcal{K}$-Dyck paths. Therefore, our next direction is as follows.

\begin{prob}
Find additional pairs of $q,t$-symmetric statistics $(\mathrm{stat1}, \mathrm{stat2})$ on classical Dyck paths of composition type $\alpha$.
\end{prob}

Finally, the results in Proposition \ref{prop-k12} and \ref{prop-a1c} suggest that the $q,t$-polynomial $\widehat{C}_{\vec{k}}(q,t)$, graded by the pair of statistics (bounce, depth) on $\vec{k}$-Dyck paths, may also exhibit $q,t$-symmetry for some special cases. For instance, $\widehat{C}_{\vec{k}}(q,t)$ is $q,t$-symmetric for $\ell(\vec{k})\leq 2$ or $\vec{k}=(a,1,c)$ with positive $a$ and $c$, as in these cases, $\mathrm{bounce}(\pi)=\mathrm{depth}(\pi)$ for every $\pi$.

Based on computational data, we make the following observation.

\begin{prob}
Prove the following observation: If $\vec{k}=(a,2,c)$, $(a,1,1,d)$, $(a,2,1,d)$, or $(a,1,1,1,e)$, where $c$, $d$, and $e$ are positive, then $\widehat{C}_{\vec{k}}(q,t)$ is $q,t$-symmetric. Moreover, $\widehat{C}_{\vec{k}}(q,t)$ does not exhibit $q,t$-symmetric if $\ell(\vec{k})\geq 6$.
\end{prob}

This problem may be approached using techniques from MacMahon's partition analysis, specifically by applying explicit formulas for bounce and depth. We leave it as an exercise for interested readers.

\noindent
\textbf{Acknowledgements:} The authors would like to thank the anonymous referee for valuable suggestions for improving the presentation. Y. Zhang was supported by the Yunnan Provincial Department of Education Scientific Research Fund Project Grant: 2026J0603.

\bibliographystyle{amsalpha}

\end{document}